\g@addto@macro\bfseries{\boldmath}
\newtheorem{theorem}{Theorem}[section]
\newtheorem{remark}[theorem]{Remark}
\newtheorem{prop}[theorem]{Proposition}
\newtheorem{lemma}[theorem]{Lemma}
\numberwithin{equation}{section}
\newcommand{\R}{\mathbb{R}}
\DeclareMathOperator*{\diam}{diam}
\newcommand{\mcK}{\mathcal{K}}
\newcommand{\mcN}{\mathcal{N}}
\newcommand{\mcO}{\mathcal{O}}
\newcommand{\tn}{|\mspace{-1mu}|\mspace{-1mu}|}
\newcommand{\hatOmega}{\widehat{\Omega}}
\newcommand{\hatmcK}{\widehat{\mcK}}
\newcommand{\OO}{\mathcal{O}}
\newcommand{\hatv}{\widehat{v}}
\newcommand{\hatw}{\widehat{w}}
\newcommand{\hatA}{\widehat{A}}
\newcommand{\hmax}{h}
\begin{document}

\begin{frontmatter}
\title{MultiMesh Finite Elements with Flexible Mesh Sizes}
\author[sintef]{August Johansson\corref{cor1}}
\ead{august.johansson@sintef.no}
\author[umu]{Mats G. Larson}
\ead{mats.larson@umu.se}
\author[cth]{Anders Logg}
\ead{logg@chalmers.se}
\cortext[cor1]{Corresponding author}
\address[sintef]{SINTEF Digital, Mathematics and Cybernetics, P.O.\ Box 124 Blindern, 0314 Oslo, Norway}
\address[umu]{Department of Mathematics and Mathematical Statistics, Ume{\aa} University, 90187 Ume{\aa}, Sweden}
\address[cth]{Department of Mathematical Sciences, Chalmers University of Technology and University of Gothenburg, 41296 G\"oteborg, Sweden}

\begin{abstract}
We analyze a new framework for expressing finite element methods on arbitrarily many intersecting meshes: multimesh finite element methods.
The multimesh finite element method, first presented in~\cite{mmfem-1},
enables the use of separate meshes to discretize parts of a computational domain that are naturally separate; such as the componen{}ts of an engine, the  domains of a multiphysics problem, or solid bodies interacting under the influence of forces from surrounding fluids or other physical fields. Furthermore, each of these meshes may have its own mesh parameter.

In the present paper we study the Poisson equation and show that the proposed formulation is stable without assumptions on the relative sizes of the mesh parameters.
In particular, we prove optimal order \emph{a~priori} error estimates as well as optimal order estimates of the condition number. Throughout the analysis, we trace the dependence of the number of intersecting meshes. Numerical examples are included to illustrate the stability of the method.
\end{abstract}

\begin{keyword}
  FEM \sep unfitted mesh \sep non-matching mesh \sep multimesh \sep CutFEM \sep Nitsche
\end{keyword}

\end{frontmatter}

\section{Introduction}

The multimesh finite element method presented in~\cite{mmfem-1} extends the finite element method to arbitrarily many overlapping and intersecting meshes. This is of great value for problems that are naturally formulated on domains composed of \emph{parts}, such as complex domains composed of simpler parts that may be more easily meshed than their composition. This is of particular importance when the parts are moving, either relative to each other or relative to a fixed background mesh, as part of a time-dependent simulation or optimization problem~\cite{Dokken:2017aa,DokkenNS}. Figure~\ref{fig:motivation} provides some illustrative examples. Here, as in~\cite{mmfem-1}, we consider the Poisson equation with stationary interfaces to simplify the analysis.

The mathematical basis for the multimesh element method is Nitsche's method \cite{Nitsche:1971aa}, which is here used for weakly enforcing the interface conditions between the different meshes. Nitsche's method is also the basis for discontinuous Galerkin methods~\cite{Arnold:1982aa} which also may be cast in a setting of non-matching meshes~\cite{NME:NME2631,Johansson:2013:HOD:2716605.2716805,GURKAN2019466,SAYE2017647,SAYE2017683}. In addition, Nitsche's method is also the foundation of the finite element method on cut meshes, CutFEM, see for example~\cite{Hansbo:2002aa,Hansbo:2003aa,Becker:2009aa,Burman2012,Burman2017,claus2018} or~\cite{burman:2015aa,Bordas-et-al-2017} for overviews.

Several methods for treating the interface problems with non-matching and multiple meshes exist in literature. There are techniques based on XFEM~\cite{Moes1999131,NME:NME2914,NME:NME386,LEHRENFELD2016716,FORMAGGIA2018893,ALAUZET2016300,Zonca2018,Moumnassi2014,MOUMNASSI2011774}; domain decomposition~\cite{RANK1992299,BeckerHansboStenberg} or~\cite{APPELO20126012,Henshaw:2008:PCT:1387360.1387531} and the references therein; the finite cell method~\cite{Schillinger2015,DUSTER20083768,Parvizian2007,Xu2016,DEPRENTER2017297,prenter2}; the immersed interface method~\cite{LI1998253,li2006immersed}; the classical immersed boundary methods and its variants using finite elements~\cite{Peskin,Boffi2003491,Boffi20152584,HELTAI2012110,Lui2007,Zhang20042051}; the s-version of the finite element method~\cite{FISH1992539,FISH1994135} and fictitious domain methods~\cite{GLOWINSKI1994283,nagai2007,KADAPA20161}, to name a few.

Another approach is to use a matching mesh and make use of elements with polytopic shapes. Methods with this capability include the PolyDG method~\cite{cangiani2014,Antonietti2016}, hybrid high order methods~\cite{DiPietroErn2015,DiPietroErnLemaire2016}, virtual element methods~\cite{BeiraoBrezziCangianiEtAl2013,BeiraodaVeigaBrezziMariniEtAl2014}, and mimetic methods~\cite{BrezziLipnikovSimoncini2005,LipnikovManziniShashkov2014}.

The contributions of this paper is first a generalization of the formulation in~\cite{mmfem-1} for the Poisson equation to allow for meshes of arbitrary mesh sizes. In the formulation, each mesh has its own mesh size and can be placed in a general position. The
properties of the mesh arrangement is encoded in terms of the maximum number of overlapping meshes at any point in the domain. Naturally, this number may be much lower than the total number of meshes. The second contribution is a detailed analysis of the method. We carefully trace the dependency of the number of intersecting meshes in the coercivity of the method, in the error estimates and in the analysis of the condition number. The analysis holds for two and three dimension as well as for higher order elements, and extends previous works on cut finite elements for overlapping meshes and interface problems to much more general mesh arrangements and mesh sizes. We restrict ourselves to two dimensions in the numerical examples. See also~\cite{Massing:2014aa,Johansson:2015aa,johansson2018stokes} where related formulations for the Stokes problem are presented and analyzed.

In the remainder of this paper, we analyze the multimesh finite element method for the Poisson problem for an arbitrary number of intersecting meshes and arbitrarily mesh sizes, and present numerical examples. We will start with reviewing the notation from~\cite{mmfem-1} in Section~\ref{sec:notation}, following a presentation of the multimesh finite element method~\ref{sec:formulation}. We then proceed to establish standard results such as consistency and continuity of the method in Section~\ref{sec:fem_basics}. Showing coercivity, interpolation error estimates, \emph{a priori} error estimates and a condition number estimate require more work, which is why we dedicate individual Sections to these in~\ref{sec:coercive},~\ref{sec:interpolationerrest},~\ref{sec:apriori}, and~\ref{sec:conditioning} correspondingly. We end the paper with numerical results in Section~\ref{sec:numres}, conclusions in Section~\ref{sec:conclusions} and acknowledgments in Section~\ref{sec:acks}.

\begin{figure}[htbp]
  \centering{}
  \includegraphics[width=0.4\textwidth]{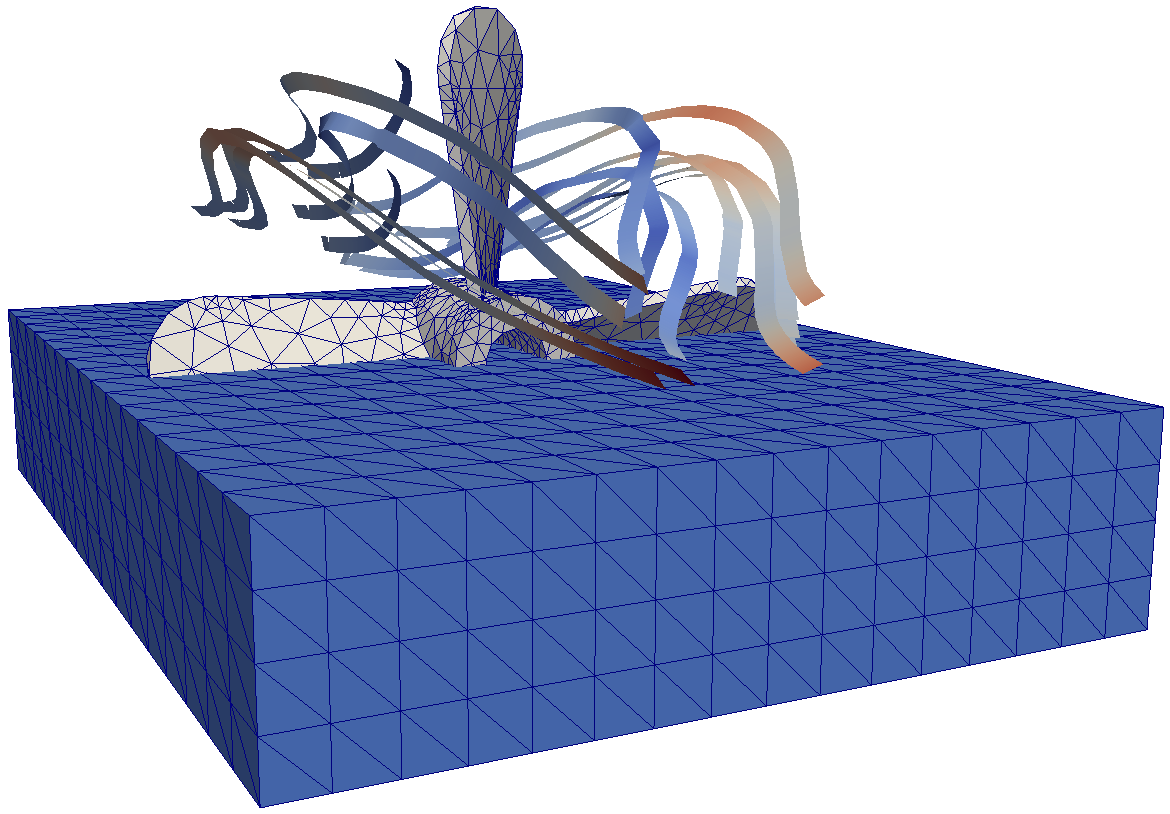} \quad
  \includegraphics[width=0.4\textwidth]{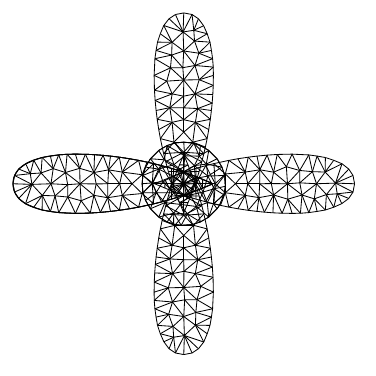} \\[0.5em]
  \includegraphics[width=0.85\textwidth]{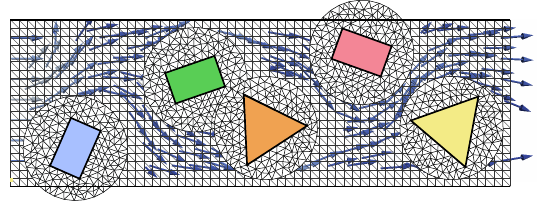}
  \caption{(Top left) The flow around a propeller may be computed by immersing a mesh of the propeller
into a fixed background mesh. (Top right) The geometry of a composite object may be discretized by superimposing meshes of each component. (Bottom) The interaction of a set of solid bodies may be simulated
using individual meshes that move and intersect freely relative to each other and a fixed background mesh. (These illustrations also appear in~\cite{mmfem-1}.) }
  \label{fig:motivation}
\end{figure}

\section{Notation}
\label{sec:notation}

We first review the notation for domains, interfaces, meshes, overlaps, function spaces and norms used to formulate and analyze the multimesh finite element method. For a more detailed exposition, we refer to~\cite{mmfem-1}.

\subsection{Notation for domains}
\begin{itemize}[noitemsep,topsep=0pt,parsep=0pt,partopsep=0pt]
\item Let $\Omega = \hatOmega_0 \subset \R^d$, $d = 2,3$, be a domain with polygonal boundary (the background domain).
\item Let $\hatOmega_i \subset \hatOmega_0$, $i=1,\ldots, N$ be the so-called \emph{predomains} with polygonal boundaries (see Figure~\ref{fig:three_domains}). Note that these are placed in an ordering.
\item Let $\Omega_i = \hatOmega_i \setminus \bigcup_{j=i+1}^{N} \hatOmega_j$,  $i=0, \ldots, N$ be a partition of $\Omega$ (see Figure~\ref{fig:three_domains_partition}). Note that
this means that $x \in \Omega$ belongs to $\Omega_i$, where $i$ is the largest index $j$
such that $x\in \hatOmega_j$, i.e.,
\begin{equation}
i = \max \{ j : x \in \hatOmega_j \}.
\end{equation}
\end{itemize}

\begin{figure}
  \centering
  \subfloat[]{\label{fig:three_domains_a}\includegraphics[height=0.25\linewidth]{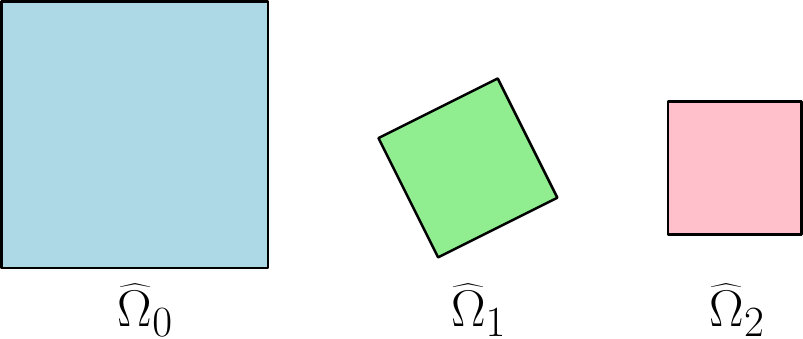}}\qquad\qquad\qquad
  \subfloat[]{\label{fig:three_domains_b}\includegraphics[height=0.25\linewidth]{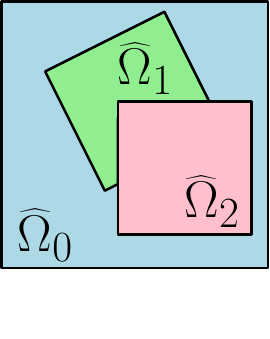}}\\
  \subfloat[]{\label{fig:three_domains_partition}\includegraphics[height=0.25\linewidth]{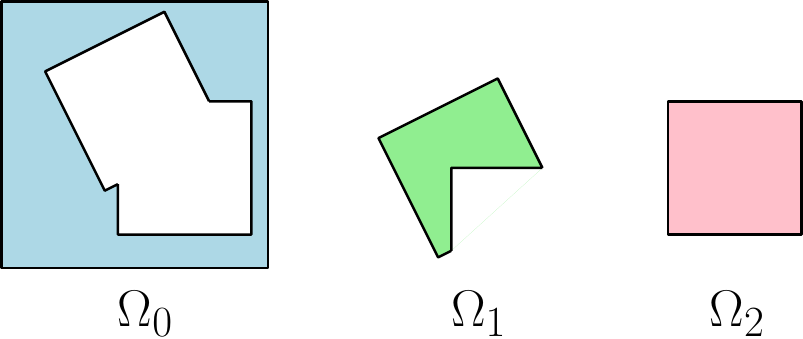}}
  \caption{(a) Three polygonal predomains. (b) The predomains are placed on top of each other in an ordering such that
    $\hatOmega_0$ is placed lowest, $\hatOmega_1$ is in the middle and $\hatOmega_2$ is on top. (c) Partition of $\Omega = \Omega_0 \cup \Omega_1 \cup \Omega_2$. Note that $\Omega_2 = \hatOmega_2$. (These illustrations also appear in~\cite{mmfem-1}.)}
  \label{fig:three_domains}
\end{figure}

\begin{remark}
  \label{rem:boundary-overlap}
To simplify the presentation, the domains $\Omega_1, \ldots, \Omega_N$ are not allowed to intersect the boundary of $\Omega$. The method can be extended to include situations where
the subdomains may intersect the boundary by using weak enforcement of boundary conditions. If cut elements appear at the boundary some stabilization of the formulation is
needed, for instance face based least squares control of jumps in derivatives across faces in the vicinity of the boundary \cite{burman:2015aa} or an extension procedure \cite{johanssonlarson2013}.
\end{remark}

\subsection{Notation for interfaces}
\label{sec:interfaces}
\begin{itemize}[noitemsep,topsep=0pt,parsep=0pt,partopsep=0pt]
\item Let the \emph{interface} $\Gamma_i$ be defined by $\Gamma_i = \partial \hatOmega_i \setminus \bigcup_{j=i+1}^N \hatOmega_j$, $i=1, \ldots, N-1$ (see Figure~\ref{fig:two_interfaces_a}).
\item Let $\Gamma_{ij} = \Gamma_i \cap \Omega_j$, $i > j$, be a partition of $\Gamma_i$ (see Figure~\ref{fig:two_interfaces_b}).
\end{itemize}

\begin{figure}
  \centering
  \subfloat[]{\label{fig:two_interfaces_a}\includegraphics[width=0.25\linewidth]{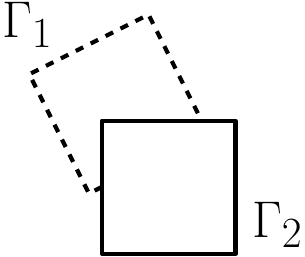}}\qquad\qquad\qquad
  \subfloat[]{\label{fig:two_interfaces_b}\includegraphics[width=0.25\linewidth]{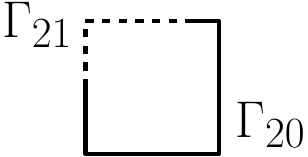}}
  \caption{(a) The two interfaces of the domains in Figure~\ref{fig:three_domains}: $\Gamma_1 = \partial \hatOmega_1 \setminus \hatOmega_2$ (dashed line) and  $\Gamma_2 = \partial \hatOmega_2$ (filled line). Note that $\Gamma_1$ is not a closed curve. (b) Partition of $\Gamma_2 = \Gamma_{20} \cup \Gamma_{21}$. (These illustrations also appear in~\cite{mmfem-1}.)} 
\end{figure}

\subsection{Notation for meshes}
\begin{itemize}[noitemsep,topsep=0pt,parsep=0pt,partopsep=0pt]
\item Let $\hatmcK_{h,i}$ be a quasi-uniform \cite{BreSco08} \emph{premesh} on $\hatOmega_i$ with mesh parameter $h_i = \max_{K\in \hatmcK_{h,i}} \diam(K)$, $i=0,\ldots,N$ (see Figure~\ref{fig:three_meshes_a}).
\item Let $\displaystyle \hmax = \max_{0\leq i \leq N} h_i$.
\item Let $\mcK_{h,i} = \{ K \in \hatmcK_{h,i} : K \cap \Omega_i \neq \emptyset \}$, $i=0,\ldots,N$ be the \emph{active meshes} (see Figure~\ref{fig:three_meshes_b}).
\item The \emph{multimesh} is formed by the active meshes placed in the given ordering (see Figure~\ref{fig:multimesh}).
\item Let $\Omega_{h,i} = \bigcup_{K\in\mcK_{h,i}} K$, $i=0,\ldots,N$ be the \emph{active domains}.
\end{itemize}

\begin{figure}
  \centering
  \subfloat[]{\label{fig:three_meshes_a}\includegraphics[height=0.25\linewidth]{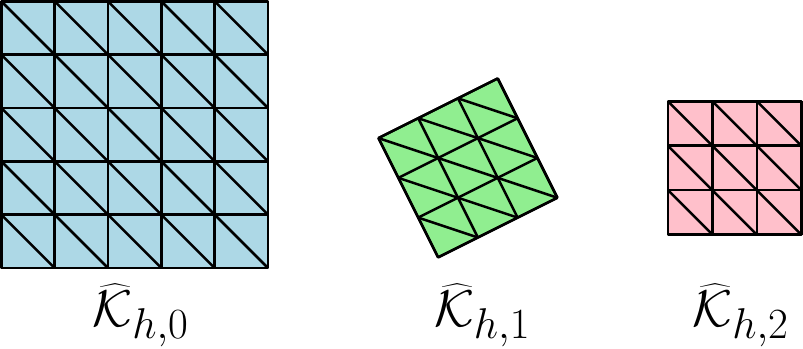}}\qquad\qquad\qquad
  \subfloat[]{\label{fig:three_meshes_b}\includegraphics[height=0.25\linewidth]{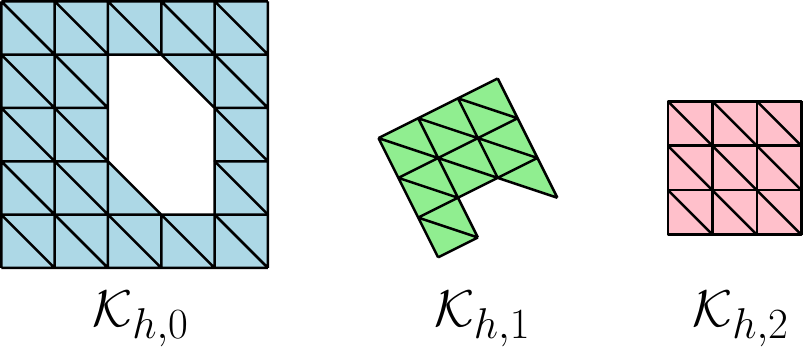}}
  \caption{(a) The three premeshes. (b) The corresponding active meshes (cf.~Figures~\ref{fig:three_domains} and~\ref{fig:three_domains_partition}). (These illustrations also appear in~\cite{mmfem-1}.)}
\end{figure}

\begin{figure}
  \centering
  \subfloat[]{\label{fig:overlap}\includegraphics[height=0.25\linewidth]{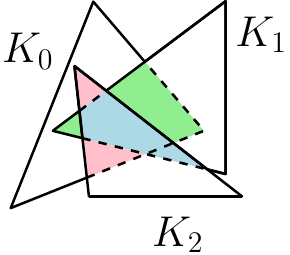}}\qquad\qquad
  \subfloat[]{\label{fig:multimesh}\includegraphics[height=0.25\linewidth]{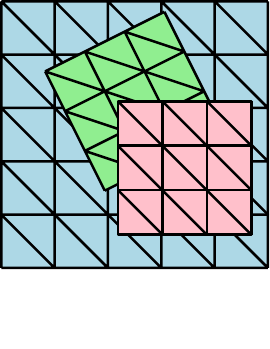}}\qquad\qquad
  \subfloat[]{\label{fig:NO}\includegraphics[height=0.25\linewidth]{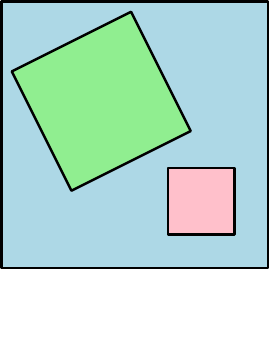}}
  \caption{(a) Given three ordered triangles $K_0$, $K_1$ and $K_2$, the overlaps are $\mcO_{01}$ in green, $\mcO_{02}$ in red and $\mcO_{12}$ in blue. (b) The multimesh of the domains in Figure~\ref{fig:three_domains_b} consists of the active meshes in Figure~\ref{fig:three_meshes_b}. (c) Example with $N=3$ domains and $N_{\OO}=2$ intersecting meshes. (The illustrations (a) and (b) also appear in~\cite{mmfem-1}.)}
\end{figure}

\subsection{Notation for overlaps}
\label{sec:overlaps}
\begin{itemize}[noitemsep,topsep=0pt,parsep=0pt,partopsep=0pt]
\item Let $\OO_i$ denote the \emph{overlap} defined by  $\OO_i = \Omega_{h,i} \setminus \Omega_i$, $i=0,\ldots,N-1$.
\item Let $\OO_{ij} = \OO_i \cap \Omega_j$, $i < j$ be a partition of $\OO_i$. See Figure~\ref{fig:overlap} for an example.
\item For $i < j$, let
  \begin{align}
    \label{eq:indicatorfunction}
    \delta_{ij} =
    \begin{cases}
      1, \quad \OO_{ij} \neq \emptyset,
      \\
      0, \quad \text{otherwise},
    \end{cases}
  \end{align}
be a function indicating which overlaps are non-empty.
For ease of notation, we further let $\delta_{ii} = 1$ for $i = 0, \ldots,N$.
\item Let $N_{\OO} = \max(\max_i \sum_j \delta_{ij}, \max_j \sum_i \delta_{ij})$ be the maximum number of overlaps. Note that $N_\OO$ is bounded by $N$ but is smaller if not all meshes intersect with each other. See Figure~\ref{fig:NO} for an example.
\item Let $N_{\OO_i} = \sum_{j=0}^{i-1} \delta_{ji}$, i.e., the number of meshes below mesh $i$ with non-empty intersection.
\end{itemize}

\subsection{Notation for function spaces}
\begin{itemize}[noitemsep,topsep=0pt,parsep=0pt,partopsep=0pt]

\item Let $W^s_p(\omega)$ denote the standard Sobolev spaces on $\omega\subset \Omega$ with norm denoted by $\| \cdot \|_{W^s_p(\omega)}$ and semi-norm $| \cdot |_{W^s_p(\omega)}$. The special case $p=2$ is denoted by $H^s(\omega)$ and the space with $p=2$ and zero trace is denoted by $H^s_0(\omega)$ (see also e.g.~\cite{BreSco08,Brezis11}). The Euclidean norm on $\R^N$ is denoted by $|\cdot|_{N}$.  The corresponding inner products are labeled accordingly. The same notation is used for the Lebesgue measure and absolute value. It will be clear from the argument which is used.

\item In order to define function spaces on the different meshes in the multimesh configuration we recall the concept of  external direct sums of vector spaces, see for instance \cite{halmos}. Let the external direct sum $X$ of the vector spaces $X_i$ be denoted by $X = \bigoplus_{i=0}^N X_i$, Here $X$ is the vector space with elements  $x \in X$ which are $N+1$ tuples of the form $x =(x_0, \ldots, x_N)$, where $x_i \in X_i$ for $i=0,\dots,N$, equipped with component wise addition
\begin{equation}
x + y = (x_0 + y_0, \dots, x_N + y_N) \qquad x,y \in X,
\end{equation}
and scalar multiplication
\begin{equation}
 t x = (t x_0, \dots t x_N) \qquad t \in \mathbb{R},\ x \in X.
\end{equation}
If the vector spaces $X_i$ are equipped with inner products $(\cdot,\cdot)_{X_i}$  and associated norms $\| \cdot \|_{X_i}$ we let
\begin{equation}
(x,y)_X = \sum_{i=0}^N (x_i, y_i)_{X_i}, \qquad 
\| x \|^2_X = \sum_{i=0}^N \| x_i \|^2_{X_i}.
\end{equation}
Note that the direct sum is a purely algebraic construction, which is not dependent on the fact that the component spaces $X_i$ have specific further properties for instance being function spaces, and since we always have a finite number of component spaces the direct sum is identical to the direct or Cartesian product of spaces, see \cite{halmos} for further details.

\item Let $H^s(\Omega_{h,i})$, $s \geq 0$, be the standard Sobolev spaces of order $s$ on the domain $\Omega_{h,i}$, for $i = 0,\dots, N$, and define the so called multimesh Sobolev space as the external direct sum
\begin{equation}
\bigoplus_{i=0}^N H^s(\Omega_{h,i}).
\end{equation}

\item Let $V_{h,i}$ be a continuous piecewise polynomial finite element space on the partition
$\mcK_{h,i}$ of $\Omega_{h,i}$, and let the multimesh finite element space $V_h$ be the
external direct sum
\begin{equation}
V_h = \bigoplus_{i=0}^N V_{h,i}
\end{equation}
of the finite element spaces $V_{h,i}$, $i=0,\dots,N$. To simplify the presentation we assume homogeneous Dirichlet boundary conditions on $\partial \Omega$ with strong implementation of the boundary conditions in the finite element space, i.e., $v=0$ on $\partial \Omega$ for $v \in V_{h,0}$. Other boundary conditions can be enforced using standard techniques.

\item To represent a function $v \in H^s(\Omega)$, $s\geq 0$, as a multimesh function let $E^\dagger : H^s(\Omega) \hookrightarrow \bigoplus_{i=0}^N H^s(\Omega_{h,i})$
be the embedding defined by
\begin{align}\label{eq:E-dagger}
(E^\dagger v)_i = v|_{\Omega_{h,i}} \qquad i=0,\dots,N.
\end{align}

\item To interpret the multimesh function $v\in \bigoplus_{i=0}^N L^2(\Omega_{h,i}) $ as a function in $L^2(\Omega)$ we define an embedding  $E: \bigoplus_{i=0}^N L^2(\Omega_{h,i}) \hookrightarrow L^2(\Omega)$ as follows
\begin{equation}
(E v_h)|_{\Omega_i} = v_{i} |_{\Omega_i} \qquad i=0,\dots,N.
\end{equation}
We note that it follows from the definitions of $E$ and $E^\dagger$ that
\begin{equation}
E E^\dagger v = v\qquad v \in L^2(\Omega),
\end{equation}
since
\begin{equation}
(E E^\dagger v)|_{\Omega_i} = (E^\dagger v)_i|_{\Omega_i} =  (v|_{\Omega_{h,i}})|_{\Omega_i}
= v|_{\Omega_i},
\end{equation}
where we used the fact $\Omega_i \subset \Omega_{h,i}$ in the last step.
For implementation purposes the following equivalent definition is useful
\begin{equation}\label{eq:E}
(E v)(x) = \max_{\{i \,:\, x \in \Omega_{h,i}\}} v_i(x),
\end{equation}
which corresponds to picking the top-most mesh in the case when $x$ belongs to several meshes.

\item For $s \geq 0$ we define the sum of $V_h$ and $\bigoplus_{i=0}^N H^s(\Omega_{h,i})$ by
\begin{equation}\label{eq:sum-multimesh}
V_h + \bigoplus_{i=0}^N H^s(\Omega_{h,i})  = \bigoplus_{i=0}^N \left( V_{h,i} + H^s(\Omega_{h,i}) \right),
\end{equation}
where each of the component spaces $V_{h,i} + H^s(\Omega_{h,i})\subseteq L^2(\Omega_{h,i})$ consists of functions of the form $v + w$ with $v \in V_{h,i}$ and $w \in H^s(\Omega_{h,i})$.
\end{itemize}

\subsection{Notation for jumps and averages}
\begin{itemize}[noitemsep,topsep=0pt,parsep=0pt,partopsep=0pt]
\item To formulate the stabilization form we define a jump operator for functions
  $v \in V_h$ on the overlaps $\OO_{ik} = \OO_i \cap \Omega_k$, with $i<k$ (cf.~Section \ref{sec:overlaps}), by
\begin{align}
  \llbracket v \rrbracket &= v_i - v_k \qquad \text{on $\OO_{ik}$}.
\end{align}
\item To formulate the Nitsche method we define the  jump and average operators
on the interface segment  $\Gamma_{ij} = \Gamma_i \cap \Omega_j$ for $i>j$ (cf.~Section \ref{sec:interfaces}), as
follows
\begin{align}\label{eq:jump}
   [v] &= v_i - v_j,
   \\ \label{eq:average}
  \langle n_i\cdot \nabla v \rangle &= \kappa_i n_i \cdot \nabla v_{i} + \kappa_j n_{i} \cdot \nabla v_{j},
\end{align}
where $v_i\in V_{h,i}$ and $v_j\in V_{h,j}$. The weights $\kappa_i$ and $\kappa_j$ are defined by
\begin{align}\label{eq:kappa-def}
  \kappa_l = \frac{h_l}{h_i + h_j} \qquad l = i,j,
\end{align}
and we note that
\begin{align}\label{eq:kappa-sum}
\kappa_i + \kappa_j = 1.
\end{align}
\end{itemize}
\begin{remark}
  \label{rem:ij_ji}
By the definitions in Sections \ref{sec:interfaces} and \ref{sec:overlaps} we have $\Gamma_{ij} = \Gamma_i \cap \Omega_j$ and $\OO_{ji} = (\Omega_{h,j} \setminus \Omega_j)\cap \Omega_i \supset \Gamma_{ji}$. Therefore, the two jump operators  $[\cdot]$ on $\Gamma_{ij}$ and
  $\llbracket \cdot \rrbracket$ on $\OO_{ik}$ are compatible on $\Gamma_{ij}$ in the sense that
  \begin{align}
[v] = \llbracket v \rrbracket \qquad \text{on }\Gamma_{ij}\subset {\OO_{ji}}.
\end{align}
The indices are swapped since it is most natural to partition $\Gamma_i$ with respect to domains below $i$, and $\OO_i$ with respect to domains above $i$. We will use this to show coercivity in Proposition~\ref{prop:coercivity} and in the interpolation estimate in Proposition~\ref{prop:interpolation}.
\end{remark}

\subsection{Notation for norms}
\begin{itemize}[noitemsep,topsep=0pt,parsep=0pt,partopsep=0pt]
\item Let $c>0$ and $C>0$ be constants. The inequality $x \leq Cy$ is denoted by $x \lesssim y$. The equivalence $cx\leq y \leq Cx$ is denoted by $x \sim y$.

\item Let $\| \cdot \|_{s_h}$ denote the semi-norm defined by
\begin{align}
  \label{eq:shnorm-def}
  \| v \|_{s_h}^2 =  \sum_{i=0}^{N-1} \sum_{j=i+1}^N \| \llbracket \nabla v \rrbracket \|_{\OO_{ij}}^2,
\end{align}
and let $\| \cdot\|_h$ denote the norm
\begin{align}
  \| v \|_h^2 = \sum_{i=0}^N \| v_i \|_{\Omega_{h,i}}^2.
\end{align}
Note that for the norm $\| \cdot\|_h$, the domain of integration extends to each active domain $\Omega_{h,i}$, meaning that each overlap will be counted (at least) twice.

\item The definition of the energy norm, which we will denote by $\tn \cdot \tn_h$, will be defined after the presentation of the finite element method in~\eqref{eq:energynorm-def}.
\end{itemize}

\section{Finite element method}
\label{sec:formulation}

As a model problem we consider the Poisson problem
\begin{subequations}
  \label{eq:poisson}
  \begin{align}  \label{eq:poisson-a}
    -\Delta u &= f \qquad \text{in } \Omega,\\
    \qquad u &= 0 \qquad \text{on } \partial \Omega,
  \end{align}
\end{subequations}
where $\Omega \subset \R^d$ is a polygonal domain. The multimesh finite element method for~\eqref{eq:poisson} is to find $u_h \in V_h$ such that
\begin{align}
  \label{eq:method}
  A_{h}(u_h, v)  &= l_h(v) \qquad \forall v \in V_h,
\end{align}
where for $v,w \in V_h$,
\begin{align}\label{eq:Ah}
A_h(v,w) & =  a_{h}(v, w) + s_h(v, w),
\\ \nonumber
  a_{h}(v, w) &=
   \sum_{i=0}^N (\nabla v_i,\nabla w_i)_{\Omega_i}
   \\ \nonumber
   &\qquad
   - \sum_{i=1}^N \sum_{j=0}^{i-1} (\langle n_i\cdot \nabla v \rangle, [w])_{\Gamma_{ij}}
      + ([v], \langle n_i \cdot \nabla w \rangle)_{\Gamma_{ij}}
\\ \label{eq:ah}
&\qquad
 + \sum_{i=1}^N \sum_{j=0}^{i-1} \beta_0 ( h_i + h_j )^{-1} ([v], [w])_{\Gamma_{ij}},
  \\ \label{eq:sh}
  s_h (v, w) &= \sum_{i=0}^{N-1} \sum_{j=i+1}^N \beta_1 (\llbracket \nabla v \rrbracket, \llbracket \nabla w \rrbracket)_{\OO_{ij}},
  \\ \label{eq:lh}
  l_h(v) &= \sum_{i=0}^N (f,v_i)_{\Omega_i}.
\end{align}
and we recall that the Dirichlet boundary condition $u=0$ on $\partial \Omega$ 
is for simplicity enforced strongly in $V_{h,0}$. Here, $\beta_0 > 0$ is the Nitsche (interior) penalty parameter and $\beta_1 > 0$ is a stabilization parameter. Note the relation $s_h(v, v) = \beta_1 \|v\|_{s_h}^2$ between the stabilization term $s_h$ and the norm $\|\cdot\|_{s_h}$ \eqref{eq:shnorm-def}.

\section{Energy norm, consistency, Galerkin orthogonality and continuity}
\label{sec:fem_basics}

In the forthcoming analysis we will need to compare the exact solution $u \in H^s(\Omega)$ and the finite element solution $u_h \in V_h$. To facilitate the analysis we will see that it is natural to represent the exact solution $u\in H^s(\Omega)$ as a multimesh function in $\bigoplus_{i=0}^N H^s(\Omega_{h,i})$, using the $E^\dagger$ operator \eqref{eq:E-dagger}, and define the error by
\begin{equation}
E^\dagger u - u_h \in V_h + E^\dagger H^s(\Omega).
\end{equation}
For clarity, when there is no risk of miss understanding we use the simplified
notation $u = E^\dagger u$ and write $u - u_h = E^\dagger u - u_h$.

We define the energy norm $\tn \cdot \tn_h$ on $V_h$ by
\begin{align}
  \tn v \tn^2_h &= \underbrace{\sum_{i=0}^N \| \nabla v_i \|^2_{\Omega_i}}_{I} +  \underbrace{\| v \|^2_{s_h}}_{II}
  +  \underbrace{\sum_{i=1}^N \sum_{j=0}^{i-1}
    ( h_i \| \nabla v_i \|^2_{\Gamma_{ij}} + h_j \| \nabla v_j \|^2_{\Gamma_{ij}} )}_{III}
  \nonumber \\
  &\quad
  +  \underbrace{\sum_{i=1}^N \sum_{j=0}^{i-1} ( h_i + h_j)^{-1} \|[v]\|^2_{\Gamma_{ij}}}_{IV}.
  \label{eq:energynorm-def}
\end{align}
The numbering of the terms will be used to alleviate the analysis of the method.

In order for $A_h$ and the norm $\tn \cdot  \tn_h$ to be well defined also on
$E^\dagger u$ we note that the traces of the normal flux appearing in the forms are
well defined if $u \in H^{3/2+\epsilon}(\Omega)$, $\epsilon > 0$. We then have using
$\Gamma_{ij} \subset \partial \hatOmega_i$ and standard trace inequalities
\begin{equation}\label{eq:H32eps}
\| \nabla v \|_{\Gamma_{ij}} \leq \| \nabla v \|_{\partial \hatOmega_{i}}
\lesssim \| \nabla v \|_{H^{1/2+\epsilon}(\hatOmega_i)}
\lesssim  \| v \|_{H^{3/2+\epsilon}(\Omega)},
\end{equation}
noting that $\hatOmega_i$ does not depend on $h$.
In view of these observations we define
\begin{equation}\label{def:V}
V = E^\dagger H^{3/2 + \epsilon}(\Omega) + V_h.
\end{equation}
We note that $V_h \subset V$, the error $u- u_h = E^\dagger u - u_h\in V$,
 $A_h$ is a bilinear form on  $V$, and $\tn \cdot \tn_h$ is a norm on $V$. We next
  establish the consistency, Galerkin orthogonality and continuity of the form $A_h$.
\begin{prop}[Consistency] \label{prop:consistency}
  The form $A_h$ is consistent; that is,
  \begin{align} \label{eq:consistency}
    A_h(u, v) = l_h(v) \qquad \forall v \in V_h,
  \end{align}
where $u\in H^1_0(\Omega)\cap H^{3/2+\epsilon}(\Omega)$ is the solution to~\eqref{eq:poisson}.
\end{prop}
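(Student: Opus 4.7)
The plan is to show that the weak solution $u$, being sufficiently regular (say $u\in H^2(\Omega)\cap H_0^1(\Omega)$), makes every jump-based term in $A_h(u,v)$ vanish, and then to use elementwise integration by parts together with $-\Delta u = f$ to convert the remaining volume term into $l_h(v)$, with the interface contributions that fall out of integration by parts canceling the remaining Nitsche fluxes.

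First, I would observe that since $u$ is a single continuous function on $\Omega$, its restrictions to different $\Omega_i$ agree across every interface, so $[u]=0$ on each $\Gamma_{ij}$ and $[\nabla u]=0$ on each overlap $\OO_{ij}$. This immediately kills the stabilization term $s_h(u,v)$ via \eqref{eq:sh}, as well as the symmetry term $([u],\langle n_i\cdot\nabla v\rangle)_{\Gamma_{ij}}$ and the Nitsche penalty $\beta_0(h_i+h_j)^{-1}([u],[v])_{\Gamma_{ij}}$ in $a_h$. In addition, because $\nabla u$ is single-valued on $\Gamma_{ij}$, the weighted average collapses: $\langle n_i\cdot\nabla u\rangle=(\kappa_i+\kappa_j)\,n_i\cdot\nabla u = n_i\cdot\nabla u$ by \eqref{eq:kappa-sum}. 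Hence
\begin{equation*}
A_h(u,v) \;=\; \sum_{i=0}^{N} (\nabla u,\nabla v_i)_{\Omega_i} \;-\; \sum_{i=1}^{N}\sum_{j=0}^{i-1} (n_i\cdot\nabla u,[v])_{\Gamma_{ij}} .
\end{equation*}

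Next I would integrate by parts on each $\Omega_i$ separately. For $i\ge 1$, the boundary $\partial\Omega_i$ decomposes into the parts of $\Gamma_i = \bigcup_{j<i}\Gamma_{ij}$ (with outward normal $n_i$) and the parts $\Gamma_{ki}$ for $k>i$ (with outward normal $-n_k$, since here $\Omega_i$ lies below $\hat\Omega_k$). For $i=0$, the boundary $\partial\Omega_0$ additionally contains $\partial\Omega$, but the corresponding contribution drops out because functions in $V_{h,0}$ satisfy the homogeneous boundary condition there (and so does $v_0$ as the test function). Using $-\Delta u = f$ pointwise in each $\Omega_i$, the volume integrals give $\sum_i (f,v_i)_{\Omega_i} = l_h(v)$, and the boundary contributions assemble to
\begin{equation*}
\sum_{i\ge 1}\sum_{j<i}\int_{\Gamma_{ij}} n_i\cdot\nabla u\, v_i \;-\; \sum_{i}\sum_{k>i}\int_{\Gamma_{ki}} n_k\cdot\nabla u\, v_i .
\end{equation*}
A relabeling $(k,i)\mapsto(i,j)$ in the second double sum brings it to the same index range $i>j$ with integrand $n_i\cdot\nabla u\,v_j$, so the two contributions combine into $\sum_{i\ge 1}\sum_{j<i}\int_{\Gamma_{ij}} n_i\cdot\nabla u\,(v_i - v_j)$, i.e.\ precisely $\sum_{i\ge 1}\sum_{j<i}(n_i\cdot\nabla u,[v])_{\Gamma_{ij}}$ with the sign convention used in $a_h$.

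Putting these pieces together, the interface contribution produced by integration by parts cancels exactly the consumption flux term in the reduced expression for $A_h(u,v)$, leaving $A_h(u,v)=l_h(v)$. The main obstacle is not analytic but bookkeeping: one must be careful to identify $\partial\Omega_i$ with the correct union of $\Gamma_{ij}$ pieces, track the orientation of each normal (the sign flip on $\Gamma_{ki}$ as seen from below versus from above), and reconcile the sign convention chosen for $[\cdot]$ on $\Gamma_{ij}$ with $i>j$ so that the two interface contributions indeed cancel rather than add.
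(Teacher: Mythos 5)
Your proof is correct and follows exactly the route the paper takes, which it states in one line: the jump terms vanish for the continuous exact solution, the average collapses via $\kappa_i+\kappa_j=1$, and elementwise integration by parts with $-\Delta u=f$ produces $l_h(v)$ plus interface fluxes that cancel the remaining Nitsche term. Your expanded bookkeeping of $\partial\Omega_i$ into the pieces $\Gamma_{ij}$ ($j<i$, normal $n_i$) and $\Gamma_{ki}$ ($k>i$, normal $-n_k$) is the correct way to make that one-line argument precise.
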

\begin{proof}
This result follows by for $v = (v_0,\dots,v_N) \in V_h$, multiplying \eqref{eq:poisson-a} by $v_i$, integrating by parts on $\Omega_i \subset \Omega_{h,i}$
and summing the contributions
\begin{align}
&\sum_{i=0}^N (f,v_i)_{\Omega_i} = \sum_{i=0}^N -(\Delta u,v_i)_{\Omega_i}
  =  \sum_{i=0}^N \left( (\nabla u, \nabla v_i)_{\Omega_i} - (n_i \cdot \nabla u,v_i)_{\partial \Omega_i} \right).
\end{align}
For the boundary term we note the decomposition
\begin{align}
\partial \Omega_i = \left( \cup_{j=0}^{i-1} \Gamma_{ij} \right) \cup  \left( \cup_{j=i+1}^{N} \Gamma_{ji} \right)
\end{align}
to conclude that
\begin{align}
\sum_{i=0}^N (n_i \cdot \nabla u,v_i)_{\partial \Omega_i}
=
\sum_{i=0}^N \sum_{j=0}^{i-1} (n_i \cdot \nabla u,v_i)_{\Gamma_{ij}}
+
\sum_{i=0}^N  \sum_{j=i+1}^{N} (n_i \cdot \nabla u,v_i)_{\Gamma_{ji}}.
\end{align}
Using summation by parts and then swapping the indices 
$i$ and $j$, the second term takes the form
\begin{equation}
\sum_{i=0}^N  \sum_{j=i+1}^{N} (n_i \cdot \nabla u,v_i)_{\Gamma_{ji}}  
=
\sum_{j=0}^N  \sum_{i=0}^{j-1} (n_i \cdot \nabla u,v_i)_{\Gamma_{ji}}  
=
\sum_{i=0}^N  \sum_{j=0}^{i-1} (n_j \cdot \nabla u,v_j)_{\Gamma_{ij}}  
\end{equation}
Now on $\Gamma_{ij}$ we obtain using the fact that $n_j = -n_i$ and the definition
 of the jump (\ref{eq:jump}) and average (\ref{eq:average}) with weights summing to one (\ref{eq:kappa-sum}), 
\begin{align}
&(n_i \cdot \nabla u,v_i)_{\Gamma_{ij}}
+ (n_j \cdot \nabla u,v_j)_{\Gamma_{ij}}
\nonumber \\
&\qquad = (n_i \cdot \nabla u,v_i)_{\Gamma_{ij}}
- (n_i \cdot \nabla u,v_j)_{\Gamma_{ij}}
\\
&\qquad = (n_i \cdot \nabla u, v_i - v_j)_{\Gamma_{ij}}
\\
&\qquad =
(\langle n_i \cdot \nabla u \rangle, [ v ])_{\Gamma_{ij}}.
\end{align}
Here all traces of the gradient are well defined in view of \eqref{eq:H32eps}. Now observing
that  $u|_{\Omega_i} = (E^\dagger u)_i |_{\Omega_i}$ we obtain using the notation
$u_i = (E^\dagger u)_i$,
\begin{align}
\sum_{i=0}^N (f,v_i)_{\Omega_i}
=
\sum_{i=0}^N (\nabla u_i, \nabla v_i)_{\Omega_i} - \sum_{i=0}^N \sum_{j=0}^{j-1}
(\langle n_i \cdot \nabla u_i \rangle, [ v ])_{\Gamma_{ij}},
\end{align}
which combined with the observation that
\begin{equation}
[u ] = [ E^\dagger u ] = u_i - u_j = 0 \qquad \text{on $\Gamma_{ij}$},
\end{equation}
and similarly
\begin{equation}
\llbracket u \rrbracket = \llbracket E^\dagger u \rrbracket  = 0,  
\qquad \text{on $\mcO_{ij}$}.
\end{equation}
\end{proof}

\begin{prop}[Galerkin orthogonality] \label{prop:galerkin}
  The form $A_h$ satisfies the Galerkin orthogonality; that is,
  \begin{align} \label{eq:galerkin}
    A_h(u - u_h, v) = 0 \qquad \forall v \in V_h,
  \end{align}
  where $u\in H^1_0(\Omega)\cap H^{3/2+\epsilon}(\Omega)$ is the solution of~\eqref{eq:poisson} and $u_h\in V_h$ is a solution of~\eqref{eq:method}.
\end{prop}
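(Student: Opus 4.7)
The plan is to derive the Galerkin orthogonality as a direct corollary of the consistency result (Proposition~\ref{prop:consistency}) combined with the definition of the discrete problem~\eqref{eq:method}, exploiting the bilinearity of $A_h$.

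First I would observe that, since $u_h \in V_h$ solves~\eqref{eq:method}, we have $A_h(u_h, v) = l_h(v)$ for every test function $v \in V_h$. Second, since the weak solution $u \in H_0^1(\Omega)$ is sufficiently regular for the forms appearing in $A_h$ to be defined on it (elementwise gradients, boundary traces on each $\Gamma_{ij}$ and, as the jumps $[u]$ and $[\nabla u]$ vanish for the continuous exact solution, the stabilization term $s_h(u,v)$ as well), Proposition~\ref{prop:consistency} gives $A_h(u, v) = l_h(v)$ for all $v \in V_h$.

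Subtracting these two identities and invoking linearity of $A_h$ in its first argument yields
\begin{equation*}
A_h(u - u_h, v) = A_h(u, v) - A_h(u_h, v) = l_h(v) - l_h(v) = 0
\end{equation*}
for all $v \in V_h$, which is exactly~\eqref{eq:galerkin}.

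There is no real obstacle here; the only subtle point to verify is that $A_h(u, \cdot)$ is well defined on the exact solution, and this has effectively been established already in the proof of Proposition~\ref{prop:consistency} (using that the jump terms $[u]$ and $[\nabla u]$ vanish across the interfaces $\Gamma_{ij}$ and overlaps $\mcO_{ij}$, so that both the Nitsche consistency terms and the stabilization $s_h(u, v)$ make sense and reduce to what the integration-by-parts argument requires).
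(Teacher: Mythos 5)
Your argument is correct and is precisely the paper's proof: subtract the discrete equation~\eqref{eq:method} from the consistency identity~\eqref{eq:consistency} and use linearity of $A_h$ in its first argument. The paper states this in one line; your additional remark about $A_h(u,\cdot)$ being well defined on the exact solution is a reasonable elaboration already implicit in Proposition~\ref{prop:consistency}.
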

\begin{proof}
  The result follows directly from Proposition~\ref{prop:consistency} and \eqref{eq:method}.
\end{proof}

\begin{prop}[Continuity] \label{prop:continuity}
  The form $A_h$ is continuous; that is,
  \begin{align}
    \label{eq:cont}
    A_h(v,w) \lesssim \tn v \tn_h \tn w \tn_h \qquad \forall v,w \in V,
  \end{align}
  where $V$ is defined in \eqref{def:V}.
\end{prop}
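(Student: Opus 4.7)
My plan is to bound $A_h(v,w)$ term by term, matching each term to one of the four pieces $I, II, III, IV$ of the energy norm, and then assemble via a global Cauchy--Schwarz inequality. The decomposition of $A_h$ is natural: the volume piece $\sum_i (\nabla v_i, \nabla w_i)_{\Omega_i}$, the two symmetric consistency traces, the Nitsche penalty, and the stabilization $s_h$.

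First I would handle the easy pieces. For the volume piece, Cauchy--Schwarz on each $\Omega_i$ followed by a discrete Cauchy--Schwarz over $i$ gives a bound by $\sqrt{I_v}\sqrt{I_w}$. For the penalty $\beta_0(h_i+h_j)^{-1}([v],[w])_{\Gamma_{ij}}$, I absorb $(h_i+h_j)^{-1/2}$ into each factor and Cauchy--Schwarz gives a bound by $\sqrt{IV_v}\sqrt{IV_w}$. For $s_h$, applying Cauchy--Schwarz locally on each $\mcO_{ij}$ and summing yields a bound by $\beta_1\|v\|_{s_h}\|w\|_{s_h}$, i.e.\ by $\sqrt{II_v}\sqrt{II_w}$.

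The central step is the trace terms $(\langle n_i\cdot\nabla v\rangle, [w])_{\Gamma_{ij}}$. I would pair them with term $IV$ on the $[w]$ side and term $III$ on the $\langle n_i\cdot\nabla v\rangle$ side, so the task reduces to showing
\begin{equation*}
(h_i+h_j)\,\|\langle n_i\cdot\nabla v\rangle\|_{\Gamma_{ij}}^2 \;\lesssim\; h_i\|\nabla v_i\|_{\Gamma_{ij}}^2 + h_j\|\nabla v_j\|_{\Gamma_{ij}}^2.
\end{equation*}
This is where the choice of weights $\kappa_l = h_l/(h_i+h_j)$ does the work: triangle inequality and $(a+b)^2\le 2(a^2+b^2)$ give a factor $2(h_i+h_j)(\kappa_i^2\|\nabla v_i\|_{\Gamma_{ij}}^2+\kappa_j^2\|\nabla v_j\|_{\Gamma_{ij}}^2)$, and since $\kappa_l^2(h_i+h_j) = h_l^2/(h_i+h_j) \le h_l$, the right-hand side above is recovered. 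The symmetric term $([v],\langle n_i\cdot\nabla w\rangle)_{\Gamma_{ij}}$ is handled identically with the roles of $v$ and $w$ swapped.

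Finally I would assemble: each pairing has the form $\sqrt{X_v(i,j)}\sqrt{Y_w(i,j)}$ with $X,Y\in\{I,II,III,IV\}$ and a discrete Cauchy--Schwarz over the (finite) index sets turns the double sums into $\sqrt{X_v}\sqrt{Y_w}\le \tn v\tn_h\tn w\tn_h$. The assumption $v,w\in H^{3/2+\varepsilon}(\Omega)\cap V_h$ ensures the traces $\nabla v_i, \nabla w_i$ are well defined in $L^2(\Gamma_{ij})$ so that all terms on the right of the bounds are finite. The only real obstacle is the weighted-average estimate, but the weights are engineered precisely to make it go through; the rest is bookkeeping.
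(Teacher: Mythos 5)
Your proposal is correct and is essentially the paper's own argument (the paper compresses it to ``repeated use of the Cauchy--Schwarz inequality''), with the decisive weighted-average bound $(h_i+h_j)\kappa_l^2 \le h_l$ being exactly the computation the paper carries out explicitly later in the coercivity proof. No gaps.
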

\begin{proof} The result follows by repeated use of the Cauchy--Schwarz inequality.
\end{proof}

\section{Coercivity}
\label{sec:coercive}

To prove that the form $A_h$ is coercive, we will make use of the following lemma.
\begin{lemma}
  \label{lem:A}
  For all $v\in V_h$, we have
  \begin{align}
    \label{eq:lem:A}
    \|\nabla v \|_h^2
    \lesssim
    N_\OO \sum_{i=0}^N \| \nabla v_i \|^2_{\Omega_i} +  \| v \|_{s_h}^2.
  \end{align}
\end{lemma}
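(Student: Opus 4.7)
The plan is to unfold the definition of $\|\nabla v\|_h^2$ and then, on every piece of the active mesh that lies outside $\Omega_i$, compare $\nabla v_i$ with $\nabla v_j$ on the lower-indexed domain that actually covers that piece. The jump of the gradient, which is controlled by $\|v\|_{s_h}$, is precisely the error in this replacement.

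First I would write
\begin{equation*}
\|\nabla v\|_h^2 = \sum_{i=0}^N \|\nabla v_i\|_{\Omega_{h,i}}^2
= \sum_{i=0}^N \|\nabla v_i\|_{\Omega_i}^2 + \sum_{i=0}^N \|\nabla v_i\|_{\OO_i}^2,
\end{equation*}
using the decomposition $\Omega_{h,i} = \Omega_i \cup \OO_i$ which is disjoint up to a set of measure zero. The first sum is already the first term on the right-hand side of~\eqref{eq:lem:A}, so the work is in the second sum.

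Next, I would use the partition $\OO_i = \bigcup_{j>i} \OO_{ij}$, together with the identity $\nabla v_i = \nabla v_j + [\nabla v]$ valid on $\OO_{ij}$ for $i<j$ (by the jump convention $[w] = w_i - w_j$ with $i<j$). Applying the elementary inequality $(a+b)^2 \le 2a^2 + 2b^2$ pointwise gives
\begin{equation*}
\|\nabla v_i\|_{\OO_{ij}}^2 \lesssim \|\nabla v_j\|_{\OO_{ij}}^2 + \|[\nabla v]\|_{\OO_{ij}}^2.
\end{equation*}
Since $\OO_{ij}\subset \Omega_j$, the first term is bounded by $\|\nabla v_j\|_{\Omega_j}^2$, so summing over $j>i$ with $\OO_{ij}\ne\emptyset$ (equivalently $\delta_{ij}=1$) and then over $i$ yields
\begin{equation*}
\sum_{i=0}^N \|\nabla v_i\|_{\OO_i}^2 \lesssim \sum_{i=0}^N \sum_{j>i} \delta_{ij}\,\|\nabla v_j\|_{\Omega_j}^2 + \sum_{i=0}^{N-1}\sum_{j=i+1}^N \|[\nabla v]\|_{\OO_{ij}}^2,
\end{equation*}
and the second double sum is exactly $\|v\|_{s_h}^2$ by~\eqref{eq:shnorm-def}.

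Finally, the combinatorial step: interchange the order of summation in the remaining term to get $\sum_{j=0}^N\bigl(\sum_{i<j}\delta_{ij}\bigr)\|\nabla v_j\|_{\Omega_j}^2$. By the definition of $N_\OO$ as $\max(\max_i\sum_j\delta_{ij},\max_j\sum_i\delta_{ij})$, the inner sum is bounded by $N_\OO$, which gives the desired estimate once combined with the $\|\nabla v_i\|_{\Omega_i}^2$ contribution (absorbed into the $N_\OO$-scaled term since $N_\OO \ge 1$, as $\delta_{ii}=1$). The only even mildly subtle point is this combinatorial counting; everything else is a straightforward add-and-subtract argument on each overlap patch, so I do not anticipate a real obstacle.
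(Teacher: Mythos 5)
Your proposal is correct and follows essentially the same route as the paper: the paper performs the decomposition element by element ($K = \cup_{j\ge i} K\cap\Omega_j$) and then sums over $K\in\mcK_{h,i}$, which after summation is exactly your domain-level splitting $\Omega_{h,i} = \Omega_i \cup \bigcup_{j>i}\OO_{ij}$, followed by the same add-and-subtract of $\nabla v_j$, the inclusion $\OO_{ij}\subseteq\Omega_j$, and the $\delta_{ij}$-counting against $N_\OO$. No gap.
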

\begin{proof}
Take an element $K \in\mcK_{h,i}$ and observe that $K = \cup_{j=i}^N K \cap \Omega_j$.
It follows that
\begin{align}
\| \nabla v_i \|^2_{K}
&=\| \nabla v_i \|^2_{K \cap \Omega_i}
+
\sum_{j=i+1}^N \| \nabla v_i \|^2_{K\cap\Omega_j}
\\
&\leq
\| \nabla v_i \|^2_{K \cap \Omega_i}
+
2\sum_{j=i+1}^N ( \| \nabla (v_i - v_j)  \|^2_{K\cap\Omega_j} + \| \nabla v_j \|^2_{K\cap\Omega_j} )
\\
&\leq
2 \sum_{j=i}^N \| \nabla v_j \|^2_{K \cap \Omega_j}
+
2\sum_{j=i+1}^N \| \nabla (v_i - v_j)  \|^2_{K\cap\Omega_{j}}.
\end{align}
Here we have made use of the inequality $a^2 \leq 2(a-b)^2 + 2b^2$, which follows by Young's inequality $2ab \leq a^2 + b^2$ applied to $a^2 = (a - b + b)^2 = (a - b)^2 + b^2 + 2(a-b)b$.

Summing over all elements $K \in \mcK_{h,i}$ we have
\begin{align}
\| \nabla v_i \|^2_{\Omega_{h,i}}
&\leq
2 \sum_{j=i}^N  \| \nabla v_j \|^2_{\Omega_{h,i} \cap \Omega_j}
+
2\sum_{j=i+1}^N \| \nabla (v_i - v_j)  \|^2_{\Omega_{h,i} \cap\Omega_{j}} \\
&=
2 \sum_{j=i}^N \delta_{ij} \| \nabla v_j \|^2_{\Omega_{h,i} \cap \Omega_j}
+
2\sum_{j=i+1}^N \| \nabla (v_i - v_j)  \|^2_{\OO_{ij}} \\
&\leq
2 \sum_{j=i}^N \delta_{ij} \| \nabla v_j \|^2_{\Omega_j}
+
2\sum_{j=i+1}^N \| \nabla (v_i - v_j)  \|^2_{\OO_{ij}},
\end{align}
where we have used $\Omega_{h,i} \cap \Omega_j = \OO_{ij} \subseteq \Omega_j$ for $i < j$. Note that the second sum is empty for $i = N$.

Summing over all domains, we obtain by~\eqref{eq:indicatorfunction}
\begin{align}
  \| \nabla v \|_h^2
&\leq
2\sum_{i=0}^N  \sum_{j=i}^N \delta_{ij} \| \nabla v_j \|^2_{\Omega_j}
+
2 \sum_{i=0}^{N-1} \sum_{j=i+1}^N \| \nabla (v_i - v_j)  \|^2_{\OO_{ij}} \\
&\leq
2N_\OO \sum_{j=0}^N  \| \nabla v_j \|^2_{\Omega_j}
+
2 \|v\|_{s_h}^2,
\end{align}
which proves the estimate.
\end{proof}

\begin{remark}\label{rem:number-meshes} There is a dependence on the maximum number of overlaps $N_\OO$ in Lemma~\ref{lem:A}. In practice, $N_\OO$ is of moderate size and this dependence is not an issue. The interpolation error estimates and condition number estimates (shown below) have a different kind of dependence.
\end{remark}

\begin{remark}
Using an inverse bound of the form (see e.g.~\cite{BreSco08})
\begin{align}
  \label{eq:general_inverse}
  \| v \|_{H^l(K)} \lesssim h^{m-l} | v |_{H^m(K)} \qquad m,l\in \mathbb{Z}^+, \ m \leq l,
\end{align}
one can show that the stabilization term $s_h(v, w)$ may alternatively be formulated as
\begin{align}\label{eq:sh-L2}
  s_h (v, w) = \sum_{i=0}^{N-1} \sum_{j=i+1}^N \beta_1 (h_i + h_j)^{-2} (\llbracket v \rrbracket, \llbracket w \rrbracket)_{\OO_{ij}}.
 \end{align}
\end{remark}

Using Lemma~\ref{lem:A}, we may now proceed to prove the coercivity of the bilinear form.
\begin{prop}[Coercivity] \label{prop:coercivity}
The form $A_h$ is coercive. More precisely, for $\beta_0$ and $\beta_1$ large enough, we have
\begin{align}
  \label{eq:coercive}
\tn v \tn^2_h  \lesssim A_h(v,v)\qquad \forall v \in V_h.
\end{align}
\end{prop}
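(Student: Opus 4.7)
The plan is to show that each of the four pieces $I, II, III, IV$ of $\tn v\tn_h^2$ is controlled by $A_h(v,v)$. Piece $II$ is essentially free, since by construction $s_h(v,v)=\beta_1\|v\|_{s_h}^2$. Pieces $I$ and $IV$ will be handled together by the classical Nitsche argument applied to $a_h(v,v)$, and piece $III$ will then be recovered from a discrete trace inequality combined with Lemma~\ref{lem:A}.

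Starting from
\begin{equation*}
A_h(v,v)=\sum_{i=0}^N\|\nabla v_i\|_{\Omega_i}^2-2\sum_{i=1}^N\sum_{j=0}^{i-1}(\langle n_i\cdot\nabla v\rangle,[v])_{\Gamma_{ij}}+\beta_0\sum_{i=1}^N\sum_{j=0}^{i-1}(h_i+h_j)^{-1}\|[v]\|_{\Gamma_{ij}}^2+\beta_1\|v\|_{s_h}^2,
\end{equation*}
the consistency cross-terms are estimated by Cauchy--Schwarz and the weighted Young inequality $2ab\le\epsilon a^2+\epsilon^{-1}b^2$, splitting the weight as $1=(h_i+h_j)^{1/2}(h_i+h_j)^{-1/2}$. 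Writing out $\langle n_i\cdot\nabla v\rangle=\kappa_in_i\cdot\nabla v_i+\kappa_jn_i\cdot\nabla v_j$ and using $\kappa_l=h_l/(h_i+h_j)\le 1$ yields the key identity
\begin{equation*}
(h_i+h_j)\,\|\langle n_i\cdot\nabla v\rangle\|_{\Gamma_{ij}}^2\lesssim h_i\|\nabla v_i\|_{\Gamma_{ij}}^2+h_j\|\nabla v_j\|_{\Gamma_{ij}}^2,
\end{equation*}
whose constant is independent of the ratio $h_i/h_j$. This is the crucial point that makes coercivity robust to mesh-size disparity. The cross-term is thus bounded by $C\epsilon$ times piece $III$ plus $\frac{C}{\epsilon}$ times piece $IV$.

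Next, a standard discrete trace (inverse) inequality on cut elements gives $h_i\|\nabla v_i\|_{\Gamma_{ij}}^2\lesssim\|\nabla v_i\|_{\Omega_{h,i}\cap\,\omega_{ij}}^2$, and summing over $i>j$ one obtains
\begin{equation*}
\sum_{i=1}^N\sum_{j=0}^{i-1}\bigl(h_i\|\nabla v_i\|_{\Gamma_{ij}}^2+h_j\|\nabla v_j\|_{\Gamma_{ij}}^2\bigr)\lesssim N_\OO\|\nabla v\|_h^2,
\end{equation*}
the $N_\OO$ accounting for the maximum number of distinct $\Gamma_{ij}$ that may traverse a given element. Applying Lemma~\ref{lem:A} then replaces $\|\nabla v\|_h^2$ by $N_\OO\sum_i\|\nabla v_i\|_{\Omega_i}^2+\|v\|_{s_h}^2$. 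Choosing $\epsilon$ small enough and $\beta_0,\beta_1$ large enough (both admissible thresholds may depend on $N_\OO$) absorbs these contributions into the bulk $\sum_i\|\nabla v_i\|_{\Omega_i}^2$ and into the stabilization $\beta_1\|v\|_{s_h}^2$, producing $I+II+IV\lesssim A_h(v,v)$. Finally, piece $III$ is recovered by applying the same trace inequality and Lemma~\ref{lem:A} one more time to the already-controlled quantities $I$ and $II$.

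The main technical subtlety is not any single inequality but the bookkeeping: making sure that every application of Lemma~\ref{lem:A} and of the trace estimate contributes only a factor $N_\OO$ (and not $N$), so that the final constant depends only on the effective overlap count. The weighted-average identity above is what allows this bookkeeping to go through uniformly in the $h_i$'s, which is the whole point of the present generalization.
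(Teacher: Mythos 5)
Your proof is correct and follows essentially the same route as the paper: the same splitting of $A_h(v,v)$, the weighted Cauchy--Schwarz/Young estimate of the Nitsche cross-term using $(h_i+h_j)\kappa_l^2\le h_l$, the cut-element trace inequality to pass to $N_\OO\|\nabla v\|_h^2$, Lemma~\ref{lem:A} to return to the bulk and stabilization terms, and the final recovery of term $III$ from $I$ and $II$. The only cosmetic difference is your undefined patch notation $\omega_{ij}$ (the paper's $\mcK_{h,l}(\Gamma_{ij})$); the argument itself matches.
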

\begin{proof}
  We first note that
  \begin{align}\label{eq:coer-a}
   A_h(v,v) &\geq
   \sum_{i=0}^N \|\nabla v_i \|^2_{\Omega_i}
   + \sum_{i=1}^N \sum_{j=0}^{i-1} \beta_0 ( h_i + h_j)^{-1} \|[v]\|^2_{\Gamma_{ij}}
   + \beta_1 s_h(v, v)
   \nonumber \\
   &\qquad
   - \underbrace{\sum_{i=1}^N \sum_{j=0}^{i-1} 2|(\langle n_i\cdot \nabla v \rangle, [v])_{\Gamma_{ij}} |}_{\bigstar}.
\end{align}
Now, for $l = i$ or $l = j$, let
\begin{align}
  \label{eq:cutting}
  \mcK_{h,l}(\Gamma_{ij}) = \{ K \in \mcK_{h,l} : K \cap \Gamma_{ij} \neq \emptyset \}
\end{align}
denote the set of elements in $\mcK_{h,l}$ which intersect $\Gamma_{ij}$. Using an inverse estimate (see~\cite{Hansbo:2003aa}), we have
\begin{align}
  \label{eq:inverse}
  h_l \| \nabla v_l \|^2_{K \cap \Gamma_{ij}} \lesssim \| \nabla v_l \|^2_K,
\end{align}
where the constant is independent of the position of $\Gamma_{ij}$. It follows that
\begin{align}
  \label{eq:inverse_i}
  \sum_{j=0}^{i-1} h_l \| \nabla v_l \|^2_{\Gamma_{ij}}
  &=
  \sum_{j=0}^{i-1} \delta_{ji} h_l \| \nabla v_l \|^2_{\Gamma_{ij}}
  \\
  &\lesssim
  \sum_{j=0}^{i-1} \delta_{ji} \| \nabla v_l \|^2_{\mcK_{h,l}(\Gamma_{ij})}
  \\
  &\leq
  \sum_{j=0}^{i-1} \delta_{ji} \| \nabla v_l \|^2_{\Omega_{h,l}},
  \end{align}
where we have noted that $\Gamma_{ij}$ (the part of $\Gamma_i$ bordering to $\Omega_j$ for $j < i$) is empty if the overlap $\mathcal{O}_{ji}$ (the part of $\Omega_{h,j}$ intersected by $\Gamma_i$ for $j < i$) is empty, as indicated by $\delta_{ji}$. See also Remark \ref{rem:ij_ji}.

For $l = i$, we thus obtain the estimate
\begin{align}
  \sum_{i=1}^{N} \sum_{j=0}^{i-1} h_i \| \nabla v_i \|^2_{\Gamma_{ij}}
  &\lesssim
  \sum_{i=1}^{N} \sum_{j=0}^{i-1} \delta_{ji} \| \nabla v_i \|^2_{\Omega_{h,i}} \\
  &\leq
  \sum_{i=1}^{N} \left(\sum_{j=0}^{N} \delta_{ji}\right) \| \nabla v_i \|^2_{\Omega_{h,i}} \\
  &=
  \sum_{i=1}^{N} N_{\OO_i} \| \nabla v_i \|^2_{\Omega_{h,i}} \\
  &\leq N_\OO \sum_{i=0}^{N} \| \nabla v_i \|^2_{\Omega_{h,i}} \\
  &=
  \label{eq:trace_i}
  N_\OO \| \nabla v \|_h^2,
\end{align}
while for $l = j$, we obtain the similar estimate
\begin{align}
  \sum_{i=1}^{N} \sum_{j=0}^{i-1} h_j \| \nabla v_j \|^2_{\Gamma_{ij}}
  &\lesssim
  \sum_{i=1}^{N} \sum_{j=0}^{i-1} \delta_{ji} \| \nabla v_j \|^2_{\Omega_{h,j}} \\
  &\leq
  \sum_{j=0}^{N} \left(\sum_{i=0}^{N} \delta_{ji}\right) \| \nabla v_j \|^2_{\Omega_{h,j}} \\
  &=
  N_\OO \sum_{j=0}^{N} \| \nabla v_j \|^2_{\Omega_{h,j}} \\
  &=
  \label{eq:trace_j}
  N_\OO \| \nabla v \|_h^2.
\end{align}

Proceeding with $\bigstar$ using the Cauchy--Schwarz inequality with weight $\epsilon (h_i + h_j)$, where $\epsilon$ is a positive number, and Young's inequality $2ab \leq a^2 + b^2$  we obtain
\begin{align}
&\bigstar
=
\sum_{i=1}^N \sum_{j=0}^{i-1} 2|(\langle n_i\cdot \nabla v \rangle, [v])_{\Gamma_{ij}} | \\
&\leq
\sum_{i=1}^N\sum_{j=0}^{i-1} 2 \epsilon^{1/2} (h_i+h_j)^{1/2}\| \langle n_i\cdot \nabla v \rangle\|_{\Gamma_{ij}}  \epsilon^{-1/2} (h_i + h_j)^{-1/2}\|[v]\|_{\Gamma_{ij}}
\\
&\leq
\sum_{i=1}^N\sum_{j=0}^{i-1} \epsilon (h_i + h_j) \|  \langle n_i\cdot \nabla v \rangle\|_{\Gamma_{ij}}^2
+ \sum_{i=1}^N\sum_{j=0}^{i-1} \epsilon^{-1} (h_i + h_j)^{-1} \|[v]\|_{\Gamma_{ij}}^2
\\ \label{eq:coer-bb}
&\lesssim
\sum_{i=1}^N\sum_{j=0}^{i-1} \epsilon \big(  h_i \| \nabla v_i \|^2_{\Gamma_{ij}}
+ h_j \| \nabla v_j \|^2_{\Gamma_{ij}} \big)
+ \sum_{i=1}^N\sum_{j=0}^{i-1} \epsilon^{-1} ( h_i + h_j )^{-1} \|[v]\|^2_{\Gamma_{ij}}
\\
\label{eq:coer-c}
&\lesssim
\epsilon N_\OO \| \nabla v \|^2_{h}
+ \sum_{i=1}^N\sum_{j=0}^{i-1} \epsilon^{-1} ( h_i + h_j )^{-1}   \|[v]\|^2_{\Gamma_{ij}}.
\end{align}
In~\eqref{eq:coer-bb} we used that
\begin{align}
  &(h_i + h_j) \|  \langle n_i\cdot \nabla v \rangle\|_{\Gamma_{ij}}^2
  \\
  &\qquad \leq
  2(h_i + h_j)\kappa^2_i   \|   \nabla v_i \|_{\Gamma_{ij}}^2
  +2(h_i + h_j)\kappa^2_j    \|   \nabla v_j \|_{\Gamma_{ij}}^2
  \\
  &\qquad \leq
  2 h_i  \|   \nabla v_i \|_{\Gamma_{ij}}^2
  +2 h_j    \|   \nabla v_j \|_{\Gamma_{ij}}^2.
\end{align}
where we used the definition \eqref{eq:kappa-def} of the weights $\kappa_l$ to obtain
\begin{align}
  (h_i + h_j) \kappa_l^2 = \frac{h_l^2}{h_i + h_j} =  \frac{h_l}{h_i + h_j} h_l \leq h_l \qquad l = i,j.
\end{align}
In~\eqref{eq:coer-c} we made use of \eqref{eq:trace_i} and \eqref{eq:trace_j}.

By Lemma~\ref{lem:A}, we may now estimate the $\|\cdot\|_h$ norm in terms of the $\|\cdot\|_\Omega$ norm and the $\|\cdot\|_{s_h}$ norm to obtain
\begin{align}
\bigstar
&\lesssim
\label{eq:coer-d}
\epsilon N_\OO^2 \sum_{i=0}^N \|\nabla v_i\|_{\Omega_i}^2
+ \epsilon N_\OO \|v\|_{s_h}^2
+ \epsilon^{-1} \sum_{i=1}^N \sum_{j=0}^{i-1} (h_i + h_j)^{-1} \|[v]\|^2_{\Gamma_{ij}}.
\end{align}

Combining~\eqref{eq:coer-a} and~\eqref{eq:coer-d}, we find that
\begin{align}
  \label{eq:Ah_final}
A_h(v,v) &\geq
\sum_{i=0}^N (1 - \epsilon C N_\OO^2) \|\nabla v_i \|^2_{\Omega_i}
+ (\beta_1 - \epsilon CN_\OO) \| v \|_{s_h}^2
 \nonumber \\
 &\qquad + \sum_{i=1}^N \sum_{j=0}^{i-1} ( \beta_0 - \epsilon^{-1}C) (h_i + h_j)^{-1} \|[v]\|^2_{\Gamma_{ij}},
\end{align}
and thus, by choosing $\epsilon$ small enough and then $\beta_0$ and $\beta_1$ large enough,
\begin{align}
A_h(v,v) &\gtrsim
\sum_{i=0}^N  \|\nabla v_i \|^2_{\Omega_i}
+ \| v \|_{s_h}^2 + \sum_{i=1}^N \sum_{j=0}^{i-1} (h_i + h_j)^{-1} \|[v]\|^2_{\Gamma_{ij}}.
 \end{align}

The coercivity now follows by noting that term $III$ in~\eqref{eq:energynorm-def} may be controlled by terms $I$ and $II$ as above
in the estimate of $\bigstar$.
\end{proof}

\begin{remark}
  By continuity~\eqref{eq:cont}, coercivity~\eqref{eq:coercive} and a continuous $l_h(v)$, there exists a unique solution to~\eqref{eq:method} by the Lax--Milgram theorem (see e.g.~\cite{BreSco08}).
\end{remark}

\begin{remark} In view of (\ref{eq:Ah_final}) we note that for large $\beta_0$ we may take
$\epsilon \sim \beta_0^{-1}$ and thus we can choose $\beta_1 \sim \epsilon \sim \beta_0^{-1}$.
\end{remark}

\section{Interpolation error estimate}
\label{sec:interpolationerrest}

To construct an interpolation operator into $V_h$, we pick a standard interpolation operator into $V_{h,i}$,
\begin{align}
  \pi_{h,i}: H^1(\Omega_{h,i}) \longrightarrow V_{h,i}, \qquad i=0,1,\dots,N,
\end{align}
where $\pi_{h,i}$ satisfies the standard interpolation error estimate (see e.g.~\cite{BreSco08})
\begin{align}
  \label{eq:interpol-element}
  \| v - \pi_{h,i} v \|_{H^m(K)} &\lesssim h^{k+1-m} | v |_{H^{k+1}(\mcN_h(K))}.
\end{align}
Here, $\mcN_h(K)$ denotes the set of elements that share a vertex with $K$. We then define
the interpolation operator
\begin{equation}
  \displaystyle
\Pi_h : \bigoplus_{i=0}^N H^1(\Omega_{h,i}) \ni v \longmapsto \oplus_{i=0}^N \pi_{h,i} v_i \in V_h,
\end{equation}
which by composition with $E^\dagger$, see \eqref{eq:E-dagger}, provides the interpolation operator
\begin{equation}
  \displaystyle
\pi_h : H^1(\Omega)  \ni v \longmapsto \Pi_h E^\dagger v =  \oplus_{i=0}^N \pi_{h,i} (v|_{\Omega_{h,i}}) \in V_h.
\end{equation}

\begin{figure}
  \centering
  \includegraphics[width=0.3\linewidth]{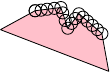}
  \caption{Balls $B_\delta(x)$ centered at $x\in \Gamma$.}
  \label{fig:balls}
\end{figure}

To prove an interpolation error estimate for $\pi_h$, recall Remark~\ref{rem:boundary-overlap} regarding the non-intersecting boundaries and let $U_\delta(\Gamma_{ij})$ denote the tubular neighborhood of $\Gamma_{ij}$ defined by
\begin{align}
  \label{eq:Uloc}
  U_\delta(\Gamma_{ij}) = \bigcup_{x\in \Gamma_{ij}} B_\delta(x),
\end{align}
where $B_\delta(x)$ is the ball of radius $\delta$ centered at $x$; see Figure \ref{fig:balls}. In addition, let
\begin{align}
  \label{eq:Uglob}
  U_\delta(\Gamma) = \bigcup_{i,j} U_\delta(\Gamma_{ij}).
\end{align}

\begin{prop}[Interpolation error estimate] \label{prop:interpolation}
  The interpolation operator $\pi_h$ satisfies the interpolation error estimate
  \begin{align}\label{eq:interpol-energy}
    \tn v - \pi_h v \tn^2_h \lesssim C_{h,N}
    \sum_{i=0}^N h_i^{2k} | v_i |^2_{H^{k+1}(\Omega_{h,i}) \cap W^{k+1}_\infty(U_h(\Gamma_i))},
  \end{align}
  where
  \begin{align}\label{eq:interpol-energy-constant}
  C_{h,N} =  1 + \max_{0\leq i \leq N} h^d_i N_{\OO_i}
  + \max_{0\leq i \leq N} h_i |\Gamma_i|,
  \end{align}
 and the norm is defined by
\begin{align}
 | v |^2_{H^{k+1}(\Omega) \cap W^{k+1}_\infty(U_h(\Gamma))} =| v |^2_{H^{k+1}(\Omega)} + | v |^2_{W^{k+1}_\infty(U_h(\Gamma))}.
\end{align}
\end{prop}
\begin{proof} We first let $\eta =  v - \pi_h v$ denote the interpolation error and recall
 the numbering of the terms in the definition of the energy norm  $\tn \cdot \tn_h$~\eqref{eq:energynorm-def}. Starting with term $I$, we have
\begin{align}
  \label{eq:term1}
  I(\eta)
  &=
  \sum_{i=0}^N  \| \nabla \eta_i \|_{\Omega_i}^2
  \leq
  \sum_{i=0}^N  \| \nabla \eta_i \|_{\Omega_{h,i}}^2.
\end{align}

For term $II$, we have, since $\cup_{j=i+1}^N \OO_{ij} \subseteq \Omega_{h,i}$ and $\OO_{ij} \subseteq U_\delta(\Gamma_{ji}) \cap \Omega_j$ (see Remark \ref{rem:ij_ji}) with $\delta \sim h_i$,
\begin{align}
  II(\eta)
  &\lesssim
  \sum_{i=0}^{N-1} \sum_{j=i+1}^N
  (\|\nabla \eta_i \|^2_{\OO_{ij}}
  + \|\nabla \eta_j \|^2_{\OO_{ij}})
  \\
  \label{eq:term2}
  &\leq
  \sum_{i=0}^{N-1} \| \nabla \eta_i \|_{\Omega_{h,i}}^2
  + \sum_{i=0}^{N-1} \sum_{j=i+1}^N \delta_{ij} \| \nabla \eta_j \|_{U_\delta(\Gamma_{ji}) \cap \Omega_j}^2.
\end{align}

For term $III$, recall the inverse estimate \eqref{eq:inverse_i} and note that $\mcK_{h,j(\Gamma_{ij})} \subseteq U_\delta(\Gamma_{ij})$ with $\delta \sim h_j$. Thus,
\begin{align}
III(\eta)
&=
\sum_{i=1}^N \sum_{j=0}^{i-1}
( h_i \| \nabla \eta_i \|_{\Gamma_{ij}}^2
+ h_j \| \nabla \eta_j \|_{\Gamma_{ij}}^2 )
\\
\label{eq:term3}
&\lesssim
N_\OO \sum_{i=1}^N \| \nabla \eta_i \|_{\Omega_{h,i}}^2
+ \sum_{i=1}^N \sum_{j=0}^{i-1} \delta_{ji} \| \nabla \eta_j \|_{U_\delta(\Gamma_{ij})}^2.
\end{align}

For term $IV$, we first handle the jump term as in $II$ and then proceed as for $III$,
\begin{align}
  IV(\eta)
  &\leq
  \sum_{i=1}^N \sum_{j=0}^{i-1} ( h_i + h_j )^{-1} ( \| \eta_i \|^2_{\Gamma_{ij}} + \| \eta_j \|^2_{\Gamma_{ij}} )
  \\
  \label{eq:term4}
  &\lesssim
  N_\OO \sum_{i=1}^N h_i^{-2} \| \eta_i \|_{\Omega_{h,i}}^2
  +
  \sum_{i=1}^N \sum_{j=0}^{i-1} \delta_{ji} h_j^{-2} \| \eta_j \|_{U_\delta(\Gamma_{ij})}^2.
\end{align}

Now, note that since $\Omega_{h,i} \subseteq \Omega_i \cup_{j=i+1}^N U_\delta(\Gamma_{ij})$, we have
\begin{align}
  \sum_{i=0}^N \| v_i \|_{\Omega_{h,i}}^2
  &\lesssim
  \sum_{i=0}^N \| v_i \|_{\Omega_i}^2
  + \sum_{i=1}^N \sum_{j=0}^{i-1} \delta_{ij} \| v_i \|_{U_\delta(\Gamma_{ij})}^2.
\end{align}
Therefore, there are only two terms in $I$ -- $IV$ that need to be estimated. First
\begin{align}
  h_i^{2(m-1)} | \eta_i |_{H^m(\Omega_i)}^2
  \lesssim
  h_i^{2k} | v_i |_{H^{k+1}(\Omega_i)}^2 \qquad m=0,1,
\end{align}
which follows immediately by~\eqref{eq:interpol-element}. Second, we make use of the disjoint partition of $\Gamma_i$ and noting that
\begin{align}
|U_\delta(\Gamma_{ij})|
\lesssim
h_i \max( h_i^{d-1}, |\Gamma_{i}|),
\end{align}
we obtain
\begin{align}
 &   \sum_{i=1}^N \sum_{j=0}^{i-1}  h_i^{2(m-1)} | \eta_i |_{H^m(U_\delta(\Gamma_{ij}))}^2
  \nonumber \\
   &\qquad \lesssim
   \sum_{i=1}^N \sum_{j=0}^{i-1}  h_i^{2k} | v_i |_{H^{k+1}(U_\delta(\Gamma_{ij}))}^2
    \\
    &\qquad \lesssim
    \sum_{i=1}^N \sum_{j=0}^{i-1}  h_i^{2k} h_i \max (h_i^{d-1},|\Gamma_{ij}|) | v_i |_{W^{k+1}_\infty(U_\delta(\Gamma_{ij}))}^2
    \\
   &\qquad \lesssim
    \sum_{i=1}^N \left(  h_i \sum_{j=0}^{i-1}  \max(h_i^{d-1},|\Gamma_{ij}|) \right)
    h_i^{2k}  | v_i |_{W^{k+1}_\infty(U_\delta(\Gamma_{i}))}^2
    \\
   &\qquad \lesssim
    \sum_{i=1}^N (h^d_i N_{\OO_i} + h_i |\Gamma_i|)
    h_i^{2k}  | v_i |_{W^{k+1}_\infty(U_\delta(\Gamma_{i}))}^2.
\end{align}
Due to the maximum norm, this estimate also holds with $\eta_i$ replaced by $\eta_j$, and the desired estimate holds.
\end{proof}
\begin{remark} Note that the stronger control  $v_i \in W^{k+1}_\infty(U_h(\Gamma))$ enables
us to establish the estimate \eqref{eq:interpol-energy} with the constant given by \eqref{eq:interpol-energy-constant} which only have weak dependence on the configuration of the overlapping mesh configuration encoded in terms of $N_{\mcO_i}$ and $|\Gamma_i|$.
\end{remark}

\section{\emph{A~priori} error estimates}
\label{sec:apriori}

We may now prove the following optimal order \emph{a~priori} error estimates.
The estimates are supported by the numerical results presented in Figure~\ref{fig:poisson_convergence}. For details on these results, we refer to the accompanying paper~\cite{mmfem-1}.

\begin{theorem}[\emph{A~priori} error estimates] \label{th:apriori}
  The finite element solution $u_h$ of~\eqref{eq:method} satisfies the following \emph{a~priori} error estimates:
  \begin{align}\label{eq:energy}
    \tn u - u_h \tn_h^2 &\lesssim C_{h,N}  \sum_{i=0}^N h_i^{2k} | u |^2_{H^{k+1}(\Omega_{h,i}) \cap W^{k+1}_\infty(U_h(\Gamma_i))},\\
    \label{eq:L2}
    \| u - u _h \|_\Omega^2 &\lesssim  ( N_{\mcO} + 1)^{1/2} C_{h,N} \hmax \sum_{i=0}^N h_i^{2k} | u |^2_{H^{k+1}(\Omega_{h,i}) \cap W^{k+1}_\infty(U_h(\Gamma_i))} .
  \end{align}
\end{theorem}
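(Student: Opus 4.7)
The plan is a Cea-style argument for~\eqref{eq:energy} and an Aubin--Nitsche duality argument for~\eqref{eq:L2}, exploiting the toolkit already in place: Propositions~\ref{prop:consistency}, \ref{prop:galerkin}, \ref{prop:continuity}, \ref{prop:coercivity}, and~\ref{prop:interpolation}.

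For the energy estimate I would split the error as $u - u_h = \eta + \xi$, with $\eta = u - \pi_h u$ and $\xi = \pi_h u - u_h \in V_h$. Coercivity gives $\tn \xi \tn_h^2 \lesssim A_h(\xi, \xi)$; inserting $\pm u$ yields $A_h(\xi, \xi) = A_h(u - u_h, \xi) - A_h(\eta, \xi)$, and the first term vanishes by Galerkin orthogonality since $\xi \in V_h$. Continuity then bounds the remainder by $\tn \eta \tn_h \tn \xi \tn_h$, giving $\tn \xi \tn_h \lesssim \tn \eta \tn_h$. Combined with the triangle inequality and Proposition~\ref{prop:interpolation} applied to $\eta$, this yields~\eqref{eq:energy}. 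Continuity is applicable because $u$, the exact solution of~\eqref{eq:poisson}, has the $H^{3/2+\epsilon}$ regularity required.

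For~\eqref{eq:L2} I would introduce the dual problem $-\Delta \phi = u - u_h$ in $\Omega$ with $\phi = 0$ on $\partial \Omega$, so that standard elliptic regularity yields $\|\phi\|_{H^2(\Omega)} \lesssim \|u - u_h\|_\Omega$. Since $\phi \in H^2(\Omega)$ has continuous trace and flux across every interface $\Gamma_{ij}$, an integration-by-parts argument exactly analogous to the consistency proof gives $\|u - u_h\|^2_\Omega = (u - u_h, -\Delta \phi)_\Omega = A_h(\phi, u - u_h)$. Symmetry of $A_h$ and Galerkin orthogonality with the test function $\pi_h \phi \in V_h$ then yield $\|u - u_h\|^2_\Omega = A_h(u - u_h, \phi - \pi_h \phi)$, whence continuity produces $\|u - u_h\|^2_\Omega \lesssim \tn u - u_h \tn_h \cdot \tn \phi - \pi_h \phi \tn_h$.

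The main obstacle will be bounding $\tn \phi - \pi_h \phi \tn_h$ using only the $H^2$ regularity of $\phi$, since Proposition~\ref{prop:interpolation} is stated under the stronger $W^{k+1}_\infty(U_h(\Gamma))$ assumption. I would therefore revisit the term-by-term proof of Proposition~\ref{prop:interpolation} and, in each tube contribution, replace the $L^\infty$-times-volume estimate $|\phi|^2_{H^2(U_\delta(\Gamma_{ij}))} \lesssim |U_\delta(\Gamma_{ij})| \, |\phi|^2_{W^2_\infty(U_\delta(\Gamma_{ij}))}$ by the trivial inclusion $|\phi|^2_{H^2(U_\delta(\Gamma_{ij}))} \le |\phi|^2_{H^2(\Omega)}$, and then sum via a counting bound involving $N_\OO$. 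Trading one power of $\hmax$ for a factor of $(N_\OO+1)^{1/2}$ in this step yields $\tn \phi - \pi_h \phi \tn_h^2 \lesssim (N_\OO+1)^{1/2}\,\hmax\,\|\phi\|^2_{H^2(\Omega)}$; inserting this together with~\eqref{eq:energy} and $\|\phi\|_{H^2(\Omega)} \lesssim \|u - u_h\|_\Omega$ into the duality inequality, dividing by $\|u - u_h\|_\Omega$, and squaring produces~\eqref{eq:L2}.
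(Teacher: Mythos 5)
Your treatment of the energy estimate \eqref{eq:energy} is exactly the paper's: split off the interpolant, use coercivity, Galerkin orthogonality and continuity to get $\tn \pi_h u - u_h \tn_h \lesssim \tn u - \pi_h u \tn_h$, then invoke Proposition~\ref{prop:interpolation}. The overall architecture of your $L^2$ argument (Aubin--Nitsche duality, consistency/symmetry to write $\|e\|_\Omega^2 = A_h(e,\phi)$, Galerkin orthogonality against $\pi_h\phi$, continuity, elliptic regularity) also coincides with the paper's. You correctly identify the one genuine obstacle: Proposition~\ref{prop:interpolation} cannot be applied to the dual solution $\phi$, which is only in $H^2(\Omega)$.

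It is precisely at that step that your plan goes wrong. Replacing the $L^\infty$-times-volume bound in the tube terms by the trivial inclusion $|\phi|^2_{H^2(U_\delta(\Gamma_{ij}))} \le |\phi|^2_{H^2(\Omega)}$ and then summing over all interface pairs inserts the \emph{global} $H^2$ seminorm into every term of a double sum, producing a constant that grows like $\sum_i N_{\OO_i}$, i.e.\ like $N N_\OO$ rather than $N_\OO + 1$; the whole point of the tube (or overlap-counting) estimates is to avoid exactly this. Moreover, the bound you assert, $\tn \phi - \pi_h\phi \tn_h^2 \lesssim (N_\OO+1)^{1/2}\hmax\,\|\phi\|^2_{H^2(\Omega)}$, has the wrong power of $\hmax$: any correct $H^2$-based interpolation estimate in the energy norm scales like $\hmax^2$ on the \emph{squared} norm, and the exponent you chose appears to be reverse-engineered from the right-hand side of \eqref{eq:L2} rather than derived. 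The paper instead bounds $\tn\phi - \pi_h\phi\tn_h^2 \lesssim \sum_i h_i^2 |\phi|^2_{H^2(\Omega_{h,i})}$ by a standard interpolation inequality on each active domain (the interface and jump terms being handled by the trace inequality \eqref{eq:traceineq} on cut elements, which needs only $H^2$), and then uses the bounded-overlap property to get $\sum_i |\phi|^2_{H^2(\Omega_{h,i})} \lesssim (N_\OO+1)|\phi|^2_{H^2(\Omega)}$. This yields the unsquared estimate $\|e\|_\Omega \lesssim (N_\OO+1)^{1/2}\hmax \tn e \tn_h$, which is the optimal-order result; you should derive that and not try to force the intermediate interpolation bound to reproduce the displayed prefactor of \eqref{eq:L2} literally.
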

\begin{proof}
  The proof of~\eqref{eq:energy} follows the standard procedure of splitting the error and using the energy norm interpolation error estimate from Proposition~\ref{prop:interpolation},
\begin{align}
  \tn u - u_h \tn_h
  &\leq
  \label{eq:proof-energy-a}
  \tn u - \pi_h u \tn_h
  + \tn \pi_h u - u_h \tn_h.
\end{align}
To estimate the second term on the right-hand side of~\eqref{eq:proof-energy-a},
we use the coercivity (Proposition~\ref{prop:coercivity}), Galerkin orthogonality (Proposition~\ref{prop:galerkin}) and continuity (Proposition~\ref{prop:continuity}) of $A_h$ to obtain
\begin{align}
\tn \pi_h u - u_h \tn_h^2
&\lesssim
A_h(  \pi_h u - u_h,  \pi_h u - u_h )
\\
&=
A_h(  \pi_h u - u,  \pi_h u - u_h )
\\
&\lesssim
\tn  \pi_h u - u \tn_h \tn \pi_h u - u_h \tn_h.
\end{align}
It follows that
\begin{align}
  \label{eq:proof-energy-b}
  \tn \pi_h u - u_h \tn_h &\lesssim \tn u - \pi_h u \tn_h.
\end{align}
Combining~\eqref{eq:proof-energy-a} and~\eqref{eq:proof-energy-b} with the interpolation error estimate of Proposition~\ref{prop:interpolation} now yields~\eqref{eq:energy}.

To prove~\eqref{eq:L2}, we use a standard duality argument (see e.g.~\cite{BreSco08}). Let $\phi$ be the solution to the dual problem
\begin{subequations}
  \begin{align}
    -\Delta \phi &= \psi \qquad \text{in } \Omega,\\
    \phi &= 0 \qquad  \text{on } \Omega,
  \end{align}
\end{subequations}
with $\psi \in L^2(\Omega)$. Using elliptic regularity we then have
\begin{equation}
\| \phi \|_{H^2(\Omega)} \lesssim \| \psi \|_\Omega,
\end{equation}
from which it follows that $\phi\in V$. Using the fact that $A_h$ is symmetric it follows from consistency (Proposition \ref{prop:consistency}) that
\begin{align}
  A_h(v,\phi) = ( v, \psi)_\Omega \qquad \forall v \in V.
\end{align}
We now take $v= e = u - u_h$ and use the Galerkin orthogonality (Proposition~\ref{prop:galerkin}), continuity (Proposition~\ref{prop:continuity}) and a standard
interpolation inequality on each set $\Omega_{h,i}$ (note that we cannot use stronger
regularity than $\phi \in H^2(\Omega)$ since $\psi \in L^2(\Omega)$ and thus the interpolation bound (\ref{prop:interpolation}) is not applicable for $\phi$) to obtain
\begin{align}
(e,\psi)_\Omega
&=A_h(e,\phi)
\\
&=A_h(e,\phi - \pi_h \phi )
\\
&\leq
\label{eq:continuity-regularity}
\tn e \tn_h \tn \phi - \pi_h \phi \tn_h
\\
&\lesssim \tn e \tn_h \left( \sum_{i=0}^N h_i^{2} | \phi |^2_{H^{2}(\Omega_{h,i})} \right)^{1/2}
\\
&\lesssim \tn e \tn_h \left( \sum_{i=0}^N h_i^{2} | \phi |^2_{H^{2}(\Omega_{h,i}\setminus \Omega_i)} + \sum_{i=0}^N h_i^{2} | \phi |^2_{H^{2}(\Omega_i)}\right)^{1/2}
\\
&\lesssim \tn e \tn_h \left( (N_{\mcO} + 1) \hmax^2  | \phi |^2_{H^{2}(\Omega)} \right)^{1/2}
\\ \label{eq:proof-energy-c}
&\lesssim \tn e \tn_h (N_{\mcO} + 1)^{1/2} \hmax \| \psi \|_{\Omega},
\end{align}
where we used the fact that the maximum number of overlapping meshes is $N_{\mcO}$
 and in the last step we have used the standard elliptic regularity estimate (see e.g.~\cite{BreSco08}). Note that we have continuity~\eqref{eq:continuity-regularity} also for functions in $H^{3/2+\epsilon}(\Omega)$, $\epsilon>0$, as noted in Proposition~\ref{prop:continuity}. The desired estimate~\eqref{eq:L2} now follows from~\eqref{eq:proof-energy-c} by~\eqref{eq:energy} and taking $\psi = e$.
\end{proof}

\begin{figure}
  \begin{center}
    \includegraphics[width=0.45\textwidth]{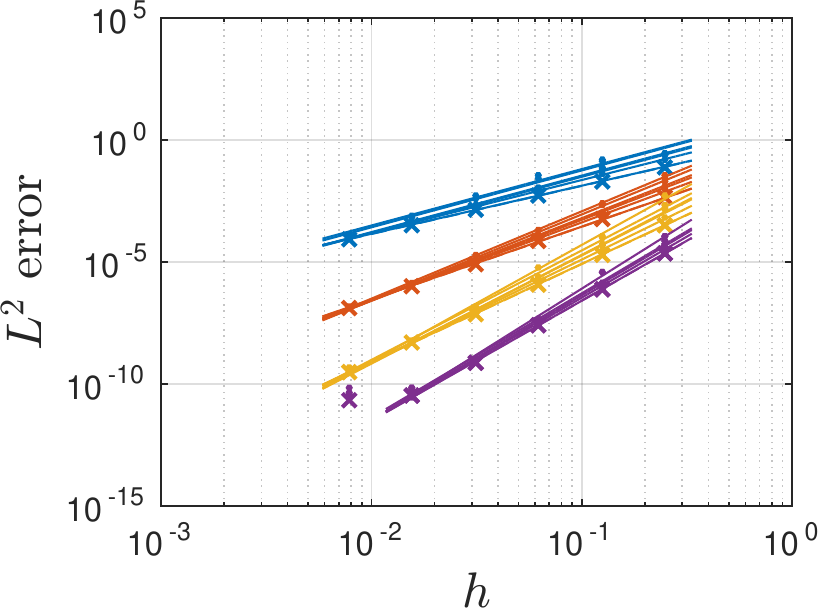}
    \includegraphics[width=0.45\textwidth]{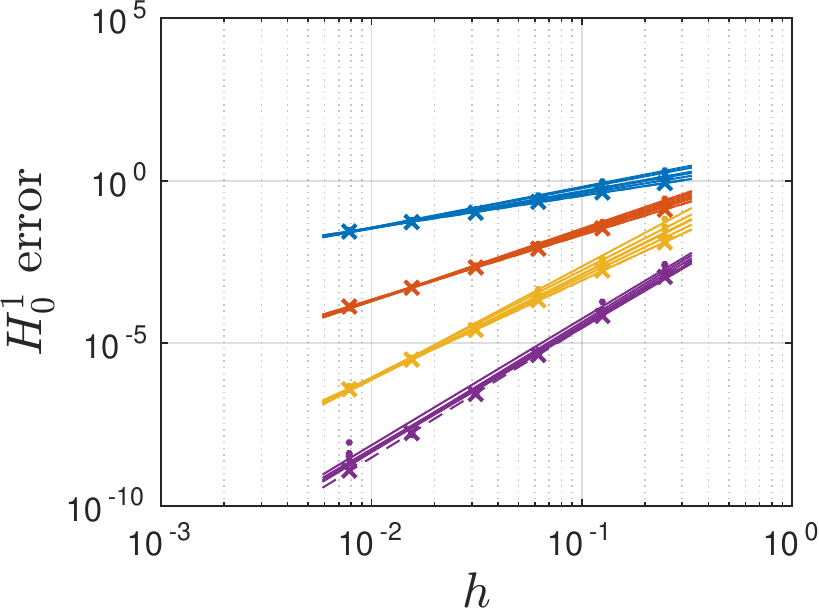}%
  \end{center}
  \caption{Rate of convergence in the $L^2(\Omega)$ (left) and $H^1_0(\Omega)$ (right) norms for $p=1$ (blue), $p=2$ (red), $p=3$
    (yellow) and $p=4$ (purple), where $p$ is the polynomial degree of the finite element approximation. For each $p$, the convergence rate is shown for $N = 1, 2, 4, 6, 16, 32$ meshes (six lines) and the errors for $N = 0$ (the standard single mesh discretization) are marked with $\times$ and dashed lines. (These illustrations also appear in~\cite{mmfem-1}.)}
  \label{fig:poisson_convergence}
\end{figure}

\section{Condition number estimate}
\label{sec:conditioning}

To prove a bound on the condition number, we first introduce some notation and definitions. Let $\{\varphi_{i,j}\}_{j=1}^{M_i}$ be the finite element basis of $V_{h,i}$. We then have the expansion
\begin{align}
  v_i = \sum_{j=1}^{M_i} \hatv_{i,j} \varphi_{i,j},
\end{align}
for each part $v_i$ of a multimesh function $v = (v_0,\ldots,v_N)$. Collecting all expansion coefficients for the $1 + N$ parts into a vector $\hat{v}$ of dimension $M = \sum_{i=0}^N M_i$, the total stiffness matrix $\hatA$ for the multimesh system is defined by
\begin{align}
  \label{eq:def-stiffness-matrix}
  (\hatA \hatv,\hatw)_{M} = A_h(v,w) \qquad \forall v,w \in V_h,
\end{align}
with condition number
\begin{align}
  \label{eq:def-condition-number}
  \kappa(\hatA) = | \hatA |_{M} | \hatA^{-1} |_{M}.
\end{align}

To derive an estimate of $\kappa(\hatA)$ we make use of the following Lemmas.
\begin{lemma}[Inverse inequality]
  \label{lem:inverse-cond}
  It holds that
  \begin{align}
    \label{eq:inverse-cond}
    \tn v \tn_h^2 \lesssim (1 + N_\OO) \max_{0\leq i \leq N} h_i^{-2} \| v \|_h^2 \qquad \forall v \in V_h.
  \end{align}
\end{lemma}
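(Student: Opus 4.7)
The plan is to bound each of the four terms $I$--$IV$ in the energy norm $\tn v\tn_h^2$ separately, applying standard quasi-uniform inverse estimates in each case and keeping careful track of the combinatorial factor that appears when we sum over pairs of meshes.

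For term $I$, I would use the elementwise inverse inequality $\|\nabla v_i\|_K^2 \lesssim h_i^{-2} \|v_i\|_K^2$ (which holds on each $K\in\mcK_{h,i}$ by quasi-uniformity, cf.\ \eqref{eq:general_inverse}), sum over $K$ and drop from $\Omega_{h,i}$ to $\Omega_i$ trivially, to get
\begin{equation*}
\sum_{i=0}^N \|\nabla v_i\|_{\Omega_i}^2 \lesssim \max_i h_i^{-2}\, \|v\|_h^2.
\end{equation*}
For term $II$, I would first expand $\|[\nabla v]\|_{\OO_{ij}}^2 \lesssim \|\nabla v_i\|_{\OO_{ij}}^2 + \|\nabla v_j\|_{\OO_{ij}}^2$, then use $\OO_{ij}\subseteq \Omega_{h,i}\cap\Omega_{h,j}$ together with the same volume inverse inequality, and finally sum using $\sum_j \delta_{ij} \leq N_\OO$ to collect a factor $N_\OO$.

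For term $III$, I would use the trace inverse estimate~\eqref{eq:inverse} in the form $h_l\|\nabla v_l\|_{\Gamma_{ij}\cap K}^2 \lesssim \|\nabla v_l\|_K^2$, then apply \eqref{eq:inverse_i} together with the volume inverse inequality on $\Omega_{h,l}$ to arrive at a bound of type $N_\OO \max_i h_i^{-2}\,\|v\|_h^2$, exactly as in~\eqref{eq:trace_i}--\eqref{eq:trace_j}. For term $IV$, the idea is the same, but applied to $\|v_l\|^2_{\Gamma_{ij}}$ via the $L^2$ trace inverse estimate $\|v_l\|_{\Gamma_{ij}\cap K}^2 \lesssim h_l^{-1}\|v_l\|_K^2$; combining this with $(h_i+h_j)^{-1}\leq h_l^{-1}$ yields $(h_i+h_j)^{-1}\|v_l\|_{\Gamma_{ij}\cap K}^2 \lesssim h_l^{-2}\|v_l\|_K^2$, and summation over elements and mesh pairs again produces the factor $N_\OO$.

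Adding the four estimates together and absorbing $N_\OO$ into $1+N_\OO$, we obtain the claimed bound. I do not anticipate any real obstacle: everything rests on the quasi-uniformity of each premesh and on the combinatorial counting already used in the coercivity proof (Proposition~\ref{prop:coercivity}). The only point that deserves care is ensuring that, when applying the trace inverse inequality across $\Gamma_{ij}$, one uses the mesh $\mcK_{h,l}$ on the appropriate side (as in~\eqref{eq:cutting}) so that the estimate is independent of how $\Gamma_{ij}$ cuts the elements.
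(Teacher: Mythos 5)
Your proposal is correct and follows essentially the same route as the paper: bound each of the four terms in the energy norm via the volume inverse inequality~\eqref{eq:general_inverse} and the cut-independent trace inverse estimates, collecting the factor $N_\OO$ from the overlap counting. The only cosmetic difference is that the paper recycles the intermediate bounds~\eqref{eq:term2} and~\eqref{eq:term4} from the interpolation section and applies the inverse inequality once at the end, whereas you apply it term by term.
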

\begin{proof}
  Recall the definition of the energy norm~\eqref{eq:energynorm-def}.
  We first note that
\begin{align}
  I
  =
  \sum_{i=0}^N \| \nabla v_i \|_{\Omega_i}^2
  \leq
  \sum_{i=0}^N \| \nabla v_i \|_{\Omega_{h,i}}^2.
\end{align}

For term $II$, we have by recalling~\eqref{eq:term2}
\begin{align}
  II
  &\lesssim
  \sum_{i=0}^{N-1} \| \nabla v_i \|_{\Omega_{h,i}}^2
  +
  \sum_{i=0}^{N-1} \sum_{j=i+1}^{N} \delta_{ij} \| \nabla v_j \|_{U_\delta(\Gamma_{ij}) \cap \Omega_j}^2.
  \\
  &\lesssim
  N_\OO \sum_{i=0}^N \| \nabla v_i \|_{\Omega_{h,i}}^2.
\end{align}
Term $III$ may be estimated similarly to obtain
\begin{align}
  III
  &\lesssim
  N_\OO \sum_{i=1}^N \| \nabla v_i \|_{\Omega_{h,i}}^2.
\end{align}

For term $IV$, we have by recalling~\eqref{eq:term4}
\begin{align}
  IV
  &\lesssim
  N_\OO \sum_{i=1}^N h_i^{-2} \| v_i \|_{\Omega_{h,i}}^2
  +
  \sum_{i=1}^N \sum_{j=0}^{i-1} \delta_{ji} h_j^{-2} \| v_j \|_{U_\delta(\Gamma_{ij})}^2
  \\
  &\lesssim
  N_\OO \sum_{i=0}^N h_i^{-2} \| v_i \|_{\Omega_{h,i}}^2.
\end{align}

The desired estimate now follows using the standard inverse inequality~\eqref{eq:general_inverse}.
\end{proof}

\begin{lemma}[Poincar\'e inequality]
  \label{lem:poincare}
  It holds that
  \begin{align}
    \label{eq:poincare-cond}
    \| v \|_h^2 \lesssim C_P \tn v \tn_h^2 \qquad \forall v \in V_h.
  \end{align}
  where
  \begin{align}
    C_P = 1 + \max_{0\leq i \leq N} h_i^{2/d} N_{\OO_i} + \max_{0\leq i \leq N} h_i^2 N_{\OO_i}.
  \end{align}
\end{lemma}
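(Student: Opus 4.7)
The plan is to split $\|v\|_h^2$ into contributions from the visible pieces $\Omega_i$ and from the overlap strips $\OO_{ij}$, then bound each by the energy norm via Poincar\'e-type inequalities. Since $\Omega_{h,i} = \Omega_i \cup \bigcup_{j>i}\OO_{ij}$ up to measure zero, we have
\begin{equation*}
  \|v\|_h^2 \;=\; \underbrace{\sum_{i=0}^N \|v_i\|_{\Omega_i}^2}_{T_1} \;+\; \underbrace{\sum_{i=0}^{N-1}\sum_{j=i+1}^N \|v_i\|_{\OO_{ij}}^2}_{T_2}.
\end{equation*}
For $T_1$ I would view the tuple $v$ as a piecewise $H^1$ function on $\Omega$, with jumps supported on the interfaces $\Gamma_{ij}$ and with $v_0$ vanishing on $\partial\Omega$. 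A broken Poincar\'e--Friedrichs inequality then bounds $T_1$ by $\sum_i\|\nabla v_i\|^2_{\Omega_i}$ plus terms of the form $(h_i+h_j)^{-1}\|[v]\|^2_{\Gamma_{ij}}$, both of which appear directly in $\tn v\tn_h^2$ (terms $I$ and $IV$). This gives $T_1\lesssim\tn v\tn_h^2$ with an $h$- and $N$-independent constant, yielding the ``$1$'' in $C_P$.

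For $T_2$ I would decompose $v_i=v_j+(v_i-v_j)$ on each overlap, so that
\begin{equation*}
  \|v_i\|_{\OO_{ij}}^2 \;\leq\; 2\|v_j\|_{\OO_{ij}}^2 + 2\|v_i-v_j\|_{\OO_{ij}}^2.
\end{equation*}
For the first piece I would use the Sobolev embedding $H^1(\Omega_j)\hookrightarrow L^{2d/(d-2)}(\Omega_j)$ (and its analogue in the critical dimension) together with H\"older's inequality. Since $\OO_{ij}$ is a strip of width $\lesssim h_i$ adjacent to $\Gamma_{ji}$, we have $|\OO_{ij}|\lesssim h_i|\Gamma_{ji}|$, hence
\begin{equation*}
  \|v_j\|_{\OO_{ij}}^2 \;\leq\; |\OO_{ij}|^{2/d}\|v_j\|_{L^{2d/(d-2)}(\OO_{ij})}^2 \;\lesssim\; h_i^{2/d}\,\|v_j\|_{H^1(\Omega_j)}^2,
\end{equation*}
and the $H^1$ norm on the right is controlled by $T_1+\sum_i\|\nabla v_i\|^2_{\Omega_i}\lesssim\tn v\tn_h^2$. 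For the second piece, every active $K\in\mcK_{h,i}$ meeting $\OO_{ij}$ necessarily crosses $\Gamma_{ji}$ (since $\Omega_i\cap\Omega_j=\emptyset$), so a local trace-Poincar\'e inequality on each such $K$ yields
\begin{equation*}
  \|v_i-v_j\|_{L^2(K\cap\OO_{ij})}^2 \;\lesssim\; h_i^2\|\nabla(v_i-v_j)\|_{L^2(K\cap\OO_{ij})}^2 + h_i\|v_i-v_j\|_{L^2(K\cap\Gamma_{ji})}^2;
\end{equation*}
summing over $K$ and using $h_i\leq h_i+h_j$ bounds this contribution by $h_i^2$ times terms from $\|v\|_{s_h}^2$ (term $II$) and term $IV$ of $\tn\cdot\tn_h$.

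Summing $T_2$ over $i<j$ and swapping the summation order, the inner sum over $i<j$ for fixed $j$ contributes the counting factor $N_{\OO_j}=\sum_{i<j}\delta_{ij}$, giving $T_2\lesssim\bigl(\max_i h_i^{2/d}N_{\OO_i}+\max_i h_i^2 N_{\OO_i}\bigr)\tn v\tn_h^2$; combining with the $T_1$ bound produces the claimed estimate. The main obstacle is obtaining the sharp $h_i^{2/d}$ scaling for the $\|v_j\|_{\OO_{ij}}^2$ piece: a naive Poincar\'e argument on the thin overlap does not yield the correct power, and one must combine the dimensionally sharp Sobolev exponent with the geometric fact that $|\OO_{ij}|\lesssim h_i$. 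Secondary care points are the critical dimension $d=2$ (where one loses the endpoint Sobolev embedding and must use $L^p$ for large but finite $p$) and a careful tracking of the indicators $\delta_{ij}$, needed to achieve the sharp combination $\max_i h_i^{2/d}N_{\OO_i}$ in $C_P$ rather than the cruder $(\max_i h_i^{2/d})(\max_i N_{\OO_i})$.
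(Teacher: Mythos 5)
Your decomposition into $T_1=\sum_i\|v_i\|^2_{\Omega_i}$ and the overlap contribution $T_2$ is reasonable, and your treatment of $T_2$ is a legitimate alternative route: where the paper applies its H\"older--Sobolev argument to the \emph{dual} solution $\phi$ on neighborhoods of the interfaces, you apply the same $|\OO_{ij}|^{2/d}$ scaling directly to $v_j$ on the overlaps, and this does produce the $h_i^{2/d}$ and $h_i^{2}$ contributions to $C_P$ (modulo the bookkeeping issue you already flag, and modulo the local ``volume controlled by cut surface'' Poincar\'e inequality for $\|v_i-v_j\|^2_{K\cap\OO_{ij}}$, which needs justification for arbitrary cut configurations; the paper avoids needing any such estimate by keeping only $\|\nabla(v_i-v_j)\|^2_{\OO_{ij}}$, which is term $II$ verbatim, via its Taylor-expansion step and Lemma~\ref{lem:A}).

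The genuine gap is your $T_1$ step. You invoke ``a broken Poincar\'e--Friedrichs inequality'' giving
\begin{equation*}
\sum_{i=0}^N\|v_i\|^2_{\Omega_i}\;\lesssim\;\sum_{i=0}^N\|\nabla v_i\|^2_{\Omega_i}+\sum_{i=1}^N\sum_{j=0}^{i-1}(h_i+h_j)^{-1}\|[v]\|^2_{\Gamma_{ij}}
\end{equation*}
with a constant independent of $h$ and $N$, and treat it as known. But this inequality, for a partition of $\Omega$ into $N+1$ subdomains of essentially arbitrary shape with mesh-dependent jump weights, is precisely the hard part of the lemma: the paper's entire proof is the duality argument that establishes it, and even then only with the constant $C_P\geq 1$ --- the terms $\max_i h_i^{2/d}N_{\OO_i}$ and $\max_i h_i^2 N_{\OO_i}$ in $C_P$ arise exactly from the interface terms $\sum_{i>j}(h_i+h_j)\|\nabla\phi\|^2_{\Gamma_{ij}}$ in that argument, via the cut trace inequality and the embedding $H^2\hookrightarrow W^1_{2d/(d-2)}$. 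Off-the-shelf broken Poincar\'e inequalities (\`a la Brenner) carry constants depending on the partition geometry and use jump weights tied to interface diameters rather than to $(h_i+h_j)$; a constant uniform in $N$ and in the possibly sliver-shaped subdomains $\Omega_i$ is not available without proof. As written you are therefore assuming a statement at least as strong as the one to be proved; to close the gap you would need to reproduce the duality argument (dual problem $-\Delta\phi=\psi$, integration by parts producing $([v],n_i\cdot\nabla\phi)_{\Gamma_{ij}}$, then the weighted trace, H\"older and Sobolev steps), at which point the $C_P$ factor reappears in $T_1$ rather than only in $T_2$.
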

\begin{proof}
  First note that by a Taylor expansion argument and Lemma~\ref{lem:A}, we have
\begin{align}
  \| v \|_h^2
  &\lesssim
  \sum_{i=0}^N \big( \| v_i \|^2_{\Omega_i} + h_i^2 \| \nabla v_i \|^2_{\Omega_{h,i}} \big)
  \\
  &\lesssim
  \label{eq:cond-bb}
  \sum_{i=0}^N \| v_i \|^2_{\Omega_i}
  + N_\OO \sum_{i=0}^N \left( h_i^2 \| \nabla v_i \|^2_{\Omega_i} + h_i^2 \sum_{j=i+1}^N \| \nabla (v_i - v_j) \|_{\OO_{ij}}^2 \right).
\end{align}
To control the first term on the right-hand side in~\eqref{eq:cond-bb}, let $\phi \in H^2(\Omega)$ be the solution to the dual problem
\begin{subequations}
  \begin{align}
    -\Delta \phi &= \psi \qquad \text{in }\Omega, \\
    \phi &= 0 \qquad \text{on }\partial \Omega,
  \end{align}
\end{subequations}
where $\psi \in L^2(\Omega)$. Multiplying the dual problem with $v \in V_h$ and integrating by parts, we obtain using the Cauchy-Schwarz inequality
\begin{align}
  \label{eq:cauchyschwarz0}
  \sum_{i=0}^N (v_i,\psi)_{\Omega_i}
  &=
\sum_{i=0}^N (v_i,-\Delta \phi )_{\Omega_i}
\\
&=
\sum_{i=0}^N (\nabla v_i,\nabla \phi )_{\Omega_i} - \sum_{i=1}^N \sum_{j=0}^{i-1} ([v],n_i \cdot \nabla \phi)_{\Gamma_{ij}}
\\
&\leq
\sum_{i=0}^N \|\nabla v_i\|_{\Omega_i} \|\nabla \phi \|_{\Omega_i}
\nonumber \\
&\qquad + \sum_{i=1}^N \sum_{j=0}^{i-1} (h_i  + h_i)^{-1/2} \|[v]\|_{\Gamma_{ij}}  (h_i + h_j)^{1/2} \|\nabla \phi \|_{\Gamma_{ij}}
\\
\label{eq:cauchyschwarz}
&\lesssim
\bigg( \sum_{i=0}^N \|\nabla v_i\|^2_{\Omega_i}   +
\sum_{i=1}^N \sum_{j=0}^{i-1} (h_i+h_j)^{-1} \|[v]\|^2_{\Gamma_{ij}} \bigg)^{1/2}
\nonumber \\
&\qquad
\times
\bigg(
 \|\nabla \phi \|^2_\Omega
 + \sum_{i=1}^N \sum_{j=0}^{i-1} (h_i+h_j) \|\nabla \phi \|^2_{\Gamma_{ij}} \bigg)^{1/2}.
\end{align}

Now we continue with the second factor in~\eqref{eq:cauchyschwarz}. Using the trace inequality
\begin{align}
  \label{eq:traceineq}
  \| v \|^2_{\gamma \cap K}  \lesssim h^{-1} \| v \|^2_K + h \| \nabla v \|^2_K,
\end{align}
with constant independent of the position of an interface $\gamma$ (see~\cite{Hansbo:2003aa}), we have
\begin{align}
  \label{eq:duality-temp}
  &\sum_{i=1}^N \sum_{j=0}^{i-1} h_l \|\nabla \phi \|^2_{\Gamma_{ij}}
  \nonumber \\
  &\qquad \lesssim
    \sum_{i=1}^N \sum_{j=0}^{i-1} \delta_{ji} \big(\|\nabla \phi \|^2_{\mcK_{h,i}(\Gamma_{ij})}
    + h_l^2 \|\nabla^2 \phi \|^2_{\mcK_{h,i}(\Gamma_{ij})} \big) \qquad l = i,j.
\end{align}
By the construction of $U_\delta(\Gamma_{ij})$, see~\eqref{eq:Uloc}, we have $\mcK_{h,i}(\Gamma_{ij}) \subseteq U_\delta(\Gamma_{ij})$ with $\delta \sim h_i$. Furthermore, by the H\"{o}lder inequality~\cite{Brezis11} with coefficients $r,s$ such that $1/r+1/s=1$ we have
\begin{align}
  \| \nabla \phi \|^2_{\mcK_{h,i}(\Gamma_{ij})}
  &\lesssim
  \| \nabla \phi \|^2_{U_\delta(\Gamma_{ij})}
  \\
  &=
  \|1 \cdot |\nabla \phi|^2\|_{L^1(U_\delta(\Gamma_{ij}))}
  \\
  &\leq
   \| 1 \|_{L^s(U_\delta(\Gamma_{ij}))} \| |\nabla \phi|^2 \|_{L^r(U_\delta(\Gamma_{ij}))}
  \\
  &=
   | U_\delta(\Gamma_{ij}) |^{1/s} \| |\nabla \phi|^2 \|_{L^r(U_\delta(\Gamma_{ij}))}
  \\
  &\lesssim
  h_i^{1/s} | \Gamma_{ij} |^{1/s} \| |\nabla \phi|^2 \|_{L^r(U_\delta(\Gamma_{ij}))}
  \\
  &\lesssim
  h_i^{1 - 2/p} \| \nabla \phi \|^2_{L^p(U_\delta(\Gamma_{ij}))}
  \\
  &\lesssim
  \label{eq:Holder-result}
  h_i^{1 - 2/p} \| \phi \|^2_{W^1_p(U_\delta(\Gamma_{ij}))}
\end{align}
with $p=2r$ and
$1/s
= 1 - 1 / r
= 1 - 2 / p$.

To determine $p$ in~\eqref{eq:Holder-result}, we use the Sobolev embedding $W^l_q(\Omega) \subseteq W^k_p(\Omega)$~\cite{Brezis11} with $k=1$, $l=2$ and $q=2$. This is motivated by the fact that due to elliptic regularity and $\psi \in L^2(\Omega)$, we have $\phi \in H^2(\Omega)$. Since the embedding holds for
$1/p - k/d = 1/q - l/d$ ~\cite{Brezis11},
we obtain
$p = 2d / (d - 2)$,
where $p=\infty$ for $d=2$. Thus
\begin{align}
  h_i^{1 - 2/p} \| \phi \|^2_{W^1_p(U_\delta(\Gamma_{ij}))}
  &\lesssim
    h_i^{2/d}  \| \phi \|^2_{W^2_{2d/(d-2)}(U_\delta(\Gamma_{ij}))}
    \\
  &\lesssim
    \label{eq:Sobolev-result}
  h_i^{2/d} \| \phi \|^2_{H^2(U_\delta(\Gamma_{ij}))}.
\end{align}
Cf.~\cite{Brezis11} regarding the last inequality for $d=2,3$. Returning to the second factor in~\eqref{eq:cauchyschwarz} we thus have, using~\eqref{eq:duality-temp}, \eqref{eq:Holder-result} and \eqref{eq:Sobolev-result} together with a standard duality argument (see e.g.~\cite{BreSco08}), elliptic regularity, a stability estimate, the Poincar\'e equality and ignoring higher order terms that
\begin{align}
  \label{eq:duality-result}
  &\|\nabla \phi \|^2_\Omega
    + \sum_{i=1}^N \sum_{j=0}^{i-1} (h_i+h_j) \|\nabla \phi \|^2_{\Gamma_{ij}}
    \\
  & \lesssim
    \|\nabla \phi \|^2_\Omega
    +  \sum_{i=1}^N \sum_{j=0}^{i-1} \delta_{ji} \left( h_i^{2/d} \|\phi \|^2_{H^2(U_\delta(\Gamma_{ij}))}
    + (h_i^2 + h_j^2) \|\nabla^2 \phi \|^2_{\mcK_{h,i}(\Gamma_{ij})}  \right)
  \\
  & \lesssim
    \|\psi \|^2_\Omega
    +  \sum_{i=1}^N \sum_{j=0}^{i-1}\delta_{ji}  \left( h_i^{2/d} \|\psi \|^2_{U_\delta(\Gamma_{ij})}
    + (h_i^2 + h_j^2) \|\psi \|^2_{\mcK_{h,i}(\Gamma_{ij})}  \right)
  \\
  & \lesssim
    \|\psi \|^2_\Omega
    +  \sum_{i=1}^N \left( N_{\OO_i} h_i^{2/d} \|\psi \|^2_{\Omega_i}
    + N_{\OO_i} h_i^2 \|\psi \|^2_{\Omega_i} \right)
  \\
  & \lesssim
    \sum_{i=0}^N (1 + h_i^{2/d} N_{\OO_i} + h_i^2 N_{\OO_i}) \| \psi \|_{\Omega_i}^2
  \\
  & \lesssim
    C_P \| \psi \|_\Omega^2.
\end{align}

The bound on $\sum_{i}(v_i,\psi)_{\Omega_i}$ in the left-hand side in~\eqref{eq:cauchyschwarz0} with $\psi = v$ now reads
\begin{align}
  \sum_{i=0}^N \| v_i \|_{\Omega_i}^2
  &\lesssim
  C_P \bigg( \sum_{i=0}^N \|\nabla v_i\|^2_{\Omega_i}   +
     \sum_{i=1}^N \sum_{j=0}^{i-1} (h_i+h_j)^{-1} \|[v]\|^2_{\Gamma_{ij}} \bigg)^{1/2}
    \label{eq:intermediate}
\end{align}
since $\| v \|_\Omega$ is bounded.

To conclude, recall~\eqref{eq:cond-bb} and insert~\eqref{eq:intermediate} to obtain the desired estimate
\begin{align}
  \label{eq:hnormtmp}
  \| v \|_h^2
  &\lesssim
  C_P \bigg( \sum_{i=0}^N \| \nabla v_i \|^2_{\Omega_i}   +
    \sum_{i=1}^N \sum_{j=0}^{i-1} (h_i+h_j)^{-1} \|[v]\|^2_{\Gamma_{ij}} \bigg)
    \nonumber \\
  &\qquad
    +
    N_\OO \sum_{i=0}^N \left( h_i^2 \| \nabla v_i \|^2_{\Omega_i} + h_i^2 \sum_{j=i+1}^N \| \nabla (v_i - v_j) \|_{\OO_{ij}}^2 \right)
  \\
  &\lesssim
    C_P \tn v \tn_h^2.
\end{align}
\end{proof}

\begin{theorem}[Condition number estimate]
  It holds that
  \begin{align}
    \label{eq:cond-number-bound}
    \kappa(\hatA) \lesssim C_P (1 + N_\OO)^2 \hmax^{-2}.
  \end{align}
\end{theorem}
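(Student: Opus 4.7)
The plan is to follow the standard spectral recipe for condition-number estimates: write $\kappa(\hatA) = |\hatA|_M \cdot |\hatA^{-1}|_M$, translate each factor into a variational statement over $V_h$ by means of $(\hatA \hatv, \hatw)_M = A_h(v,w)$, and then glue together the four main results developed so far—continuity (Proposition~\ref{prop:continuity}), coercivity (Proposition~\ref{prop:coercivity}), the inverse inequality (Lemma~\ref{lem:inverse-cond}), and the Poincar\'e inequality (Lemma~\ref{lem:poincare})—using the classical per-premesh mass-matrix scaling $\|v_i\|^2_{\Omega_{h,i}} \sim h_i^d |\hatv_i|^2$ to pass between the Euclidean vector norm $|\hatv|_M$ and the mesh-dependent $L^2$-type norm $\|v\|_h$.

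For the upper bound on $|\hatA|_M$, I would take any $v, w \in V_h$ and chain continuity with the inverse inequality to obtain
\begin{equation*}
A_h(v,w) \lesssim \tn v\tn_h \tn w\tn_h \lesssim (1+N_\OO)\, \hmax^{-2}\, \|v\|_h \|w\|_h,
\end{equation*}
the factor $(1+N_\OO)$ coming from applying Lemma~\ref{lem:inverse-cond} symmetrically in $v$ and $w$. I would then use the local scaling, summed over $i$, to replace $\|v\|_h \|w\|_h$ by the appropriate power of $\hmax$ times $|\hatv|_M |\hatw|_M$, and take supremum over unit vectors. For the lower bound on the smallest eigenvalue (equivalently an upper bound on $|\hatA^{-1}|_M$ by symmetry and positive-definiteness of $\hatA$), I would chain coercivity with Lemma~\ref{lem:poincare} to obtain
\begin{equation*}
A_h(v,v) \gtrsim \tn v\tn_h^2 \gtrsim C_P^{-1}\, \|v\|_h^2,
\end{equation*}
and again convert the right-hand side to a multiple of $|\hatv|_M^2$ via the same local scaling. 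Multiplying the two estimates then yields a bound of the form $\kappa(\hatA) \lesssim C_P (1+N_\OO)^2 \hmax^{-2}$.

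The step I expect to require the most care is the bookkeeping of mesh-size factors when the meshes can have arbitrarily different mesh parameters. Since quasi-uniformity is assumed only within each premesh, the passage between $|\hatv|_M$ and $\|v\|_h$ naively produces $\hmax^d$ on one side and $h_{\min}^d$ on the other, and these extra $h$-ratios must be absorbed into the $N_\OO$-count or into $C_P$ (using the definitions of $N_{\OO_i}$ in Lemma~\ref{lem:poincare}) to arrive at the clean $\hmax^{-2}$ in the stated bound. The second copy of $(1+N_\OO)$ is expected to appear precisely when one tracks how the inverse inequality's $(1+N_\OO)^{1/2}$ factor squares up in the bound on $|\hatA|_M$ and how a further $(1+N_\OO)$ is inherited through $C_P$ in the lower bound.
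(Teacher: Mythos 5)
Your proposal follows essentially the same route as the paper: bound $|\hatA|_M$ by chaining continuity with the inverse inequality of Lemma~\ref{lem:inverse-cond}, bound $|\hatA^{-1}|_M$ by chaining coercivity with the Poincar\'e inequality of Lemma~\ref{lem:poincare}, and pass between $\|v\|_h$ and $|\hatv|_M$ via the per-mesh scaling $\|v_i\|^2_{\Omega_{h,i}} \sim h_i^d |\hatv_i|^2_{M_i}$. The bookkeeping concern you flag (different $h_i$ making the norm equivalence one-sided) is real but is handled in the paper exactly as you suggest, by absorbing the discrepancy into the $\hmax$ and $N_\OO$ factors, so the argument is correct and matches the paper's.
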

\begin{proof}
  Since $\mcK_{h,i}$ is conforming and quasi-uniform we have the equivalence
  \begin{align}
    \| v_i \|^2_{\Omega_{h,i}} \sim h_i^d | \hatv_i |^2_{M_i} \qquad \forall v_i \in V_{h,i};
  \end{align}
  see e.g.~\cite{BreSco08}. It follows that
  \begin{align}
    \label{eq:equivalence-cond}
    \| v \|^2_h =
    \sum_{i=0}^N \| v_i \|^2_{\Omega_{h,i}}
    \sim
    \sum_{i=0}^N h_i^d | \hatv_i |^2_{M_i}
    \sim
    \hmax^d | \hatv |^2_{M}.
  \end{align}
  Recall the definition of the matrix norm
  \begin{align}
    \label{eq:def-matrixnorm}
    |\hatA|_M = \sup_{\hatv \neq 0} \frac{|\hatA \hatv|_M}{|\hatv|_M}.
  \end{align}
  To estimate $| \hatA |_M$, we use the definition of the stiffness matrix~\eqref{eq:def-stiffness-matrix}, the inverse inequality~\eqref{eq:inverse-cond} and the equivalence~\eqref{eq:equivalence-cond} to obtain
  \begin{align}
    | \hatA \hatv |_M
    &=
    \sup_{\hatw \neq 0} \frac{(\hatA \hatv, \hatw)_M}{|\hatw|_M}
    \\
    &=
    \sup_{\hatw \neq 0} \frac{A_h(v,w)}{|\hatw|_M}
    \\
    &\lesssim
    \sup_{\hatw \neq 0} \frac{\tn v \tn_h \tn w \tn_h}{|\hatw|_M}
    \\
    &\lesssim
    \sup_{\hatw \neq 0} \displaystyle \frac{\displaystyle (1 + N_\OO) \hmax^{-2} \| v \|_h \| w \|_h}{|\hatw|_M}
    \\
    &\lesssim
    (1 + N_\OO) \hmax^{d-2} | \hatv |_M.
  \end{align}
  Dividing by $|\hatv|$ and using the definition of the matrix norm~\eqref{eq:def-matrixnorm} yields
  \begin{align}
    \label{eq:A-est}
    | \hatA |_M \lesssim (1 + N_\OO) \hmax^{d-2}.
  \end{align}
  To estimate $| \hatA^{-1} |_M$, we proceed similarly, and additionally use the Poincar\'e inequality~Lemma~\ref{lem:poincare} and the coercivity of the bilinear form~\eqref{eq:coercive} to obtain
  \begin{align}
    \hmax^d |\hatv|_M^2
    &\sim
    \| v \|_h^2
    \\
    &\lesssim
      C_P \tn v \tn_h^2
    \\
    &\lesssim
      C_P A_h(v,v)
    \\
    &=
      C_P (\hatA \hatv, \hatv)_M
    \\
    &\leq
      C_P |\hatA \hatv|_M |\hatv|_M.
  \end{align}
  The inequality thus reads
  \begin{align}
    \hmax^d | \hatv |_M \lesssim C_P | \hatA \hatv |_M.
  \end{align}
  Setting $\hatv = \hatA^{-1} \hatw$ yields
  \begin{align}
    \hmax^d |\hatA^{-1} \hatw|_M \lesssim C_P |\hatw|_M.
  \end{align}
  Dividing by $|\hatw|_M$ and using the definition of the matrix norm~\eqref{eq:def-matrixnorm} now gives
  \begin{align}
    \label{eq:Ainv-est}
    | \hatA^{-1} |_M \lesssim C_P (1 + N_\OO) \hmax^{-d}.
  \end{align}
  By using~\eqref{eq:A-est} and~\eqref{eq:Ainv-est} in the definition of the condition number~\eqref{eq:def-condition-number}, we obtain the desired estimate~\eqref{eq:cond-number-bound}.
\end{proof}

The estimate for the condition number is supported by the numerical results presented in Figure~\ref{fig:condno_estimate}. The slope is found to be $-1.76$. The details on this example is found in~\cite{mmfem-1}.
\begin{figure}
  \begin{center}
    \includegraphics[width=0.5\textwidth]{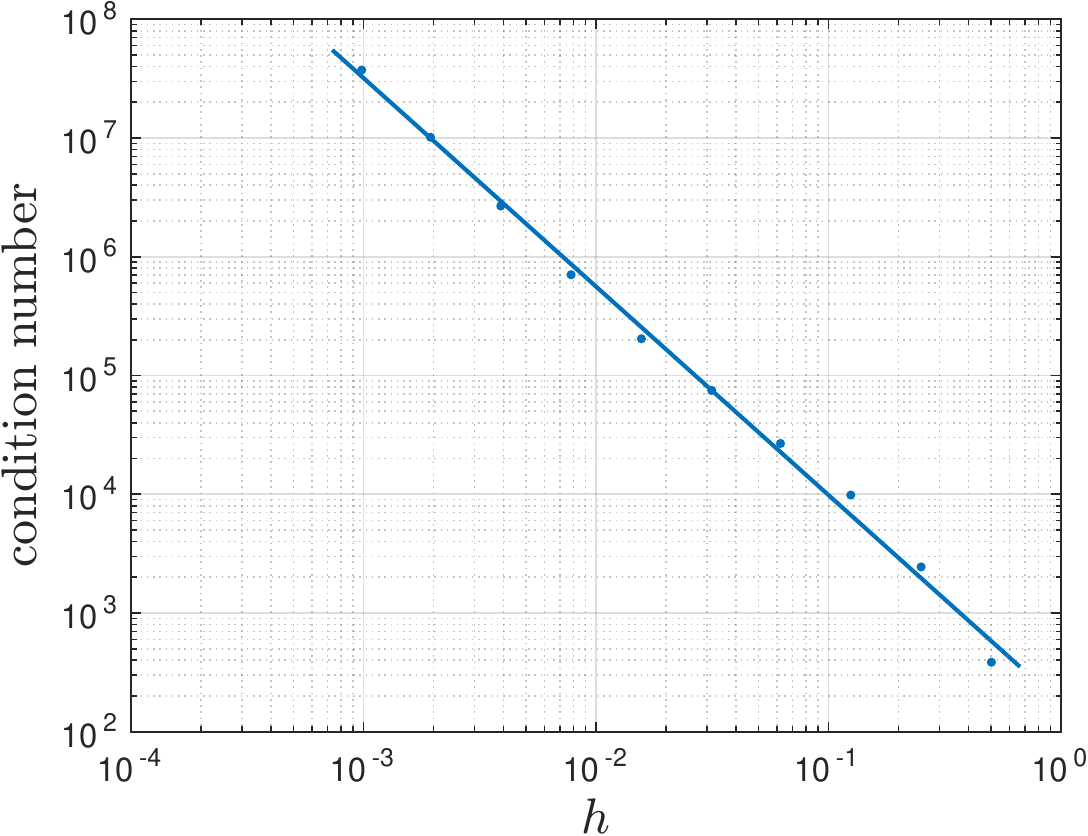}
  \end{center}
  \caption{Condition number as a function of the mesh size $h$~\cite{mmfem-1}. }
  \label{fig:condno_estimate}
\end{figure}

\section{Numerical results}
\label{sec:numres}

To demonstrate the applicability and robustness of the multimesh finite element formulation, we present here a couple of numerical examples. For additional examples, we refer to the companion paper~\cite{mmfem-1}.

\subsection{Convergence under variable mesh size}

For the first example, we construct two multimesh configurations $I$ and $II$, each consisting of three parts (overlapping meshes) as show in Figure~\ref{fig:vary-mesh-size-setup}. We consider a simple Poisson problem with analytical solution
\begin{align}
  u(x,y) = \sin (\pi x) \sin (\pi y).
\end{align}
The goal is to study the convergence under refinement of the three meshes for each of the two test cases. Starting from initial coarse meshes with equal mesh sizes, we refine each part separately, using $8$ different mesh sizes, and compute the $L^2(\Omega)$ and $H^1_0(\Omega)$ error norms. A piecewise linear finite element basis is used for both configurations.

The refinement procedure is as follows. First we will refine part $0$ in $8$ steps, then part $1$ in $8$ steps, and finally part $2$ in $8$ steps. Then we swap the order and refine part $1$ first, followed by parts $0$ and $2$. We do this for all permutations of the order of the parts; in total there are $3!$ combinations. This procedure is performed for both $I$ and $II$.

Configuration $I$ is a nested configuration (but not hierarchical). Configuration $II$ is generated by placing the second and third parts in a ``random`` position on top of the background mesh of $\Omega_0 = [0,1]^2$. Specifically, we have
\begin{align}
  \Omega_1^I &= [0.2, 0.8]^2, \\ \Omega_2^I &= [0.4, 0.6]^2, \\
  \Omega_1^{II} &= [0.2, 0.8] \times [0.3, 0.75], \text{ rotated }23^{\circ}, \\ \Omega_2^{II} &= [0.3, 0.5] \times [0.05, 0.8], \text{ rotated }44^{\circ},
\end{align}
as illustrated in Figure~\ref{fig:vary-mesh-size-setup}. The meshes are refined with mesh sizes $2^{-k}$, $k=3, \ldots, 10$, i.e., $8$ steps. Thus, the mesh size ratio between two parts are in this example at most $2^7=128$.

\begin{figure}
  \centering
  \includegraphics[width=0.45\textwidth]{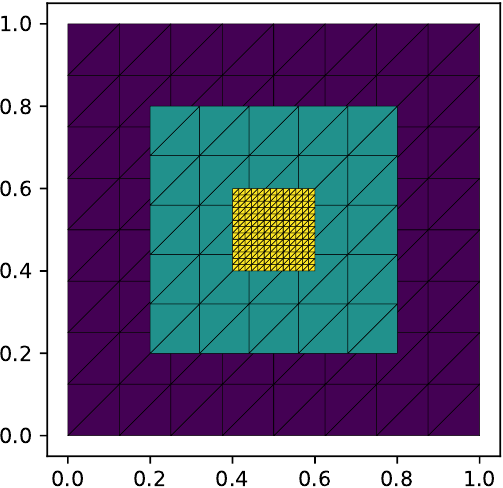}
  \includegraphics[width=0.45\textwidth]{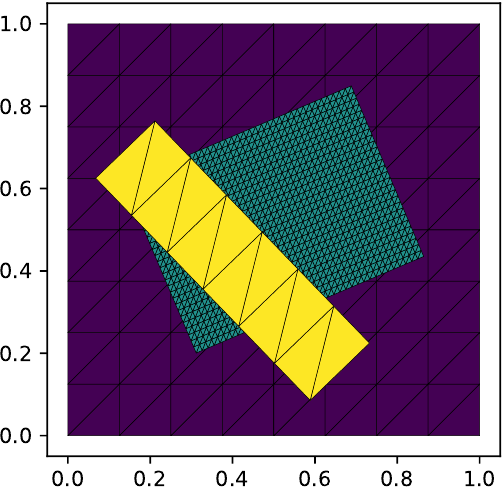}
  \caption{Configuration $I$ (left) and and configuration $II$ (right) exemplified by refined part 2 for $I$ and refined part 1 for $II$. The coarse and fine meshes have mesh sizes $2^{-3}$ and $2^{-6}$ respectively.}
  \label{fig:vary-mesh-size-setup}
\end{figure}

In Figure~\ref{fig:vary-mesh-size-errors} we show the $L^2(\Omega)$ and $H^1_0(\Omega)$ errors for the two configurations. As expected, the different curves start and end in the same point. Moreover, we see that during refinement of the first part, errors decrease but flatten. This is due to the fact that the errors from the other two, unrefined, parts dominate. When the second part starts being refined, the errors drop but will again flatten since the errors are dominated by the third and last unrefined part. Refining this part results in a sharp decrease in the error. Due to the effect of dominating errors from different parts, we observe two L-shaped drops for each refinement permutation, for both configurations and for both error quantities.

There is no significant L-shape decrease for the first refined part, but it would be possible to construct a multimesh configuration such that this would be the case. For the example with this analytical solution, the first part would have a dominating error if the area of the part would be dominating.

It is worth noting is that the errors decrease smoothly and the method is stable despite the large differences in mesh size.

\begin{figure}
  \centering
  \includegraphics[width=0.45\textwidth]{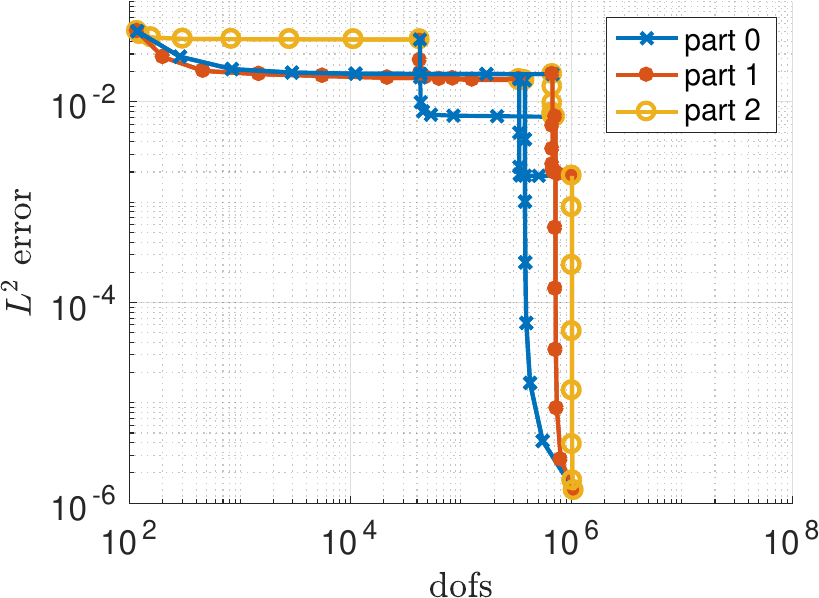}
  \includegraphics[width=0.45\textwidth]{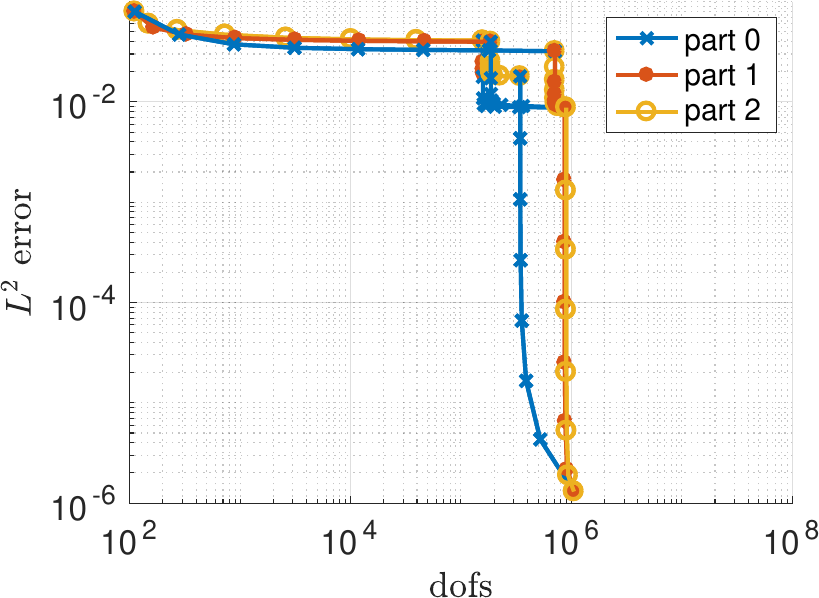}\\
  \includegraphics[width=0.45\textwidth]{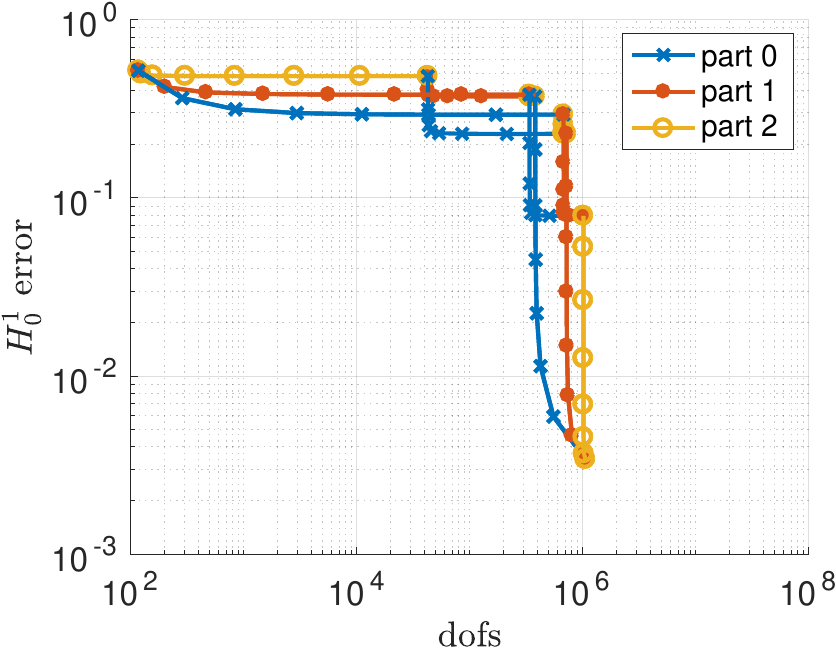}
  \includegraphics[width=0.45\textwidth]{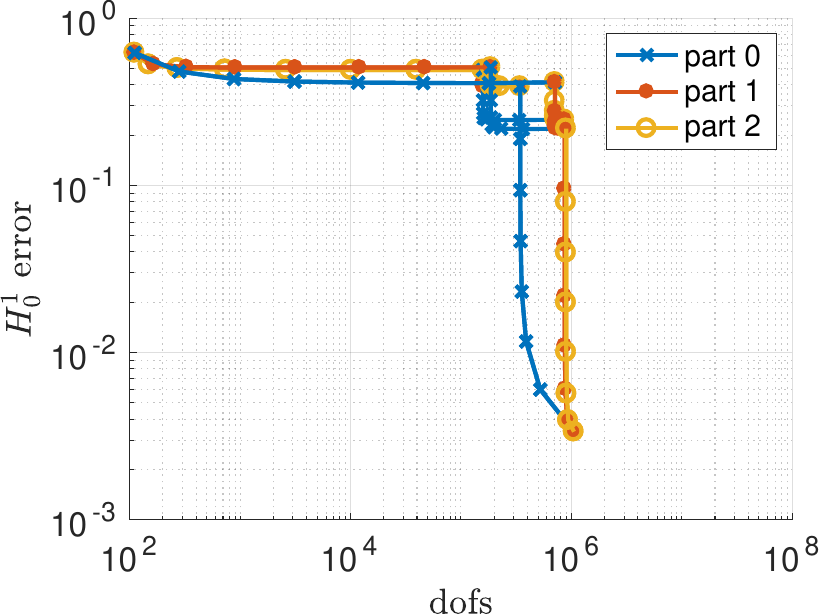}
  \caption{Errors during refinement of multimesh configurations $I$ (left column) and $II$ (right column). Colors and markers indicate which part is being refined. The refinement procedure starts with all parts having a mesh size of $2^{-3}$ resulting in approximately $10^2$ degrees of freedom. Each part is then refined individually and sequentially as described in the text, until all parts have a mesh size of $2^{-10}$, resulting in a total of approximately $10^6$ degrees of freedom. }
  \label{fig:vary-mesh-size-errors}
\end{figure}

\subsection{Boundary layer resolution}

To demonstrate the potential of the multimesh formulation for local adaptation, we consider the boundary value problem
\begin{subequations}
  \label{eq:blayer}
  \begin{align}
    -\Delta u + \epsilon^{-2} u &= f \qquad \text{in } \Omega,\\
    \qquad u &= 0 \qquad \text{on } \Gamma_0, \\
    \qquad u &= 1 \qquad \text{on } \Gamma_1.
  \end{align}
\end{subequations}
For $\epsilon\rightarrow 0$, the PDE reduces to $u = 0$ which is compatible with the boundary condition on $\Gamma_0$. As a consequence, the solution for small $\epsilon$ is $u \approx 0$ away from the boundary $\Gamma_1$ and then an exponential transition to $u = 1$ close to $\Gamma_1$. The width of the boundary layer is $\sim \epsilon$. The multimesh finite element formulation is identical to \eqref{eq:method} with the additional term
\begin{align}
  \epsilon^{-2} \sum_{i=0}^N (v_i,w_i)_{\Omega_i}.
\end{align}

We consider a model problem where the domain $\Omega$ is defined by $[0,1]^2 \setminus \omega$, where $\omega$ is the shape of the standard NACA~6409 standard. We let $\Gamma_0$ be the boundary of the unit square and let $\Gamma_1$ be the boundary of the airfoil. The solution exhibits a boundary layer of width $\epsilon$ on the airfoil boundary.

To discretize the problem, we let $\hatmcK_{h,0}$ be a uniform mesh of the unit square with mesh size $H = 2^{-(6 + k)}$ and let $\hatmcK_{h,1}$ be a boundary-fitted mesh of width $w = 0.1\cdot 2^{-k}$ for $k = 0,1,2,3,4$. The boundary layer parameter is chosen as $\epsilon = w/2$. The mesh size $h \ll H$ is chosen to well resolve the boundary layer. Note that we intentionally take $w$ small relative to the boundary layer width so as not to get the entire boundary layer transition on the finer mesh, in order to illustrate better the robustness of the method and the coupling of the solution represented on the background mesh and the boundary-fitted mesh on the interface $\Gamma$. If instead we take $\epsilon = w/10$, the solution would transition quickly to $u\approx 0$ on the interface $\Gamma$.

Figure~\ref{fig:blayer-2d} shows the solution for $k=1,2,3,4$, clearly demonstrating the decreasing width of the boundary layer with increasing $k$. Note the smooth transition of the solution going from the representation on the coarse background mesh to the fine boundary-fitted mesh. In Figure~\ref{fig:blayer-3d}, a 3D view is plotted for both solution components for $k = 0,1,2,3,4$. Finally, Figure~\ref{fig:blayer-zoom} shows detailed plots of the solution close to the boundary layer for $k = 0$ and $k = 4$.

\begin{figure}[htbp]
  \centering
  \includegraphics[width=0.45\textwidth]{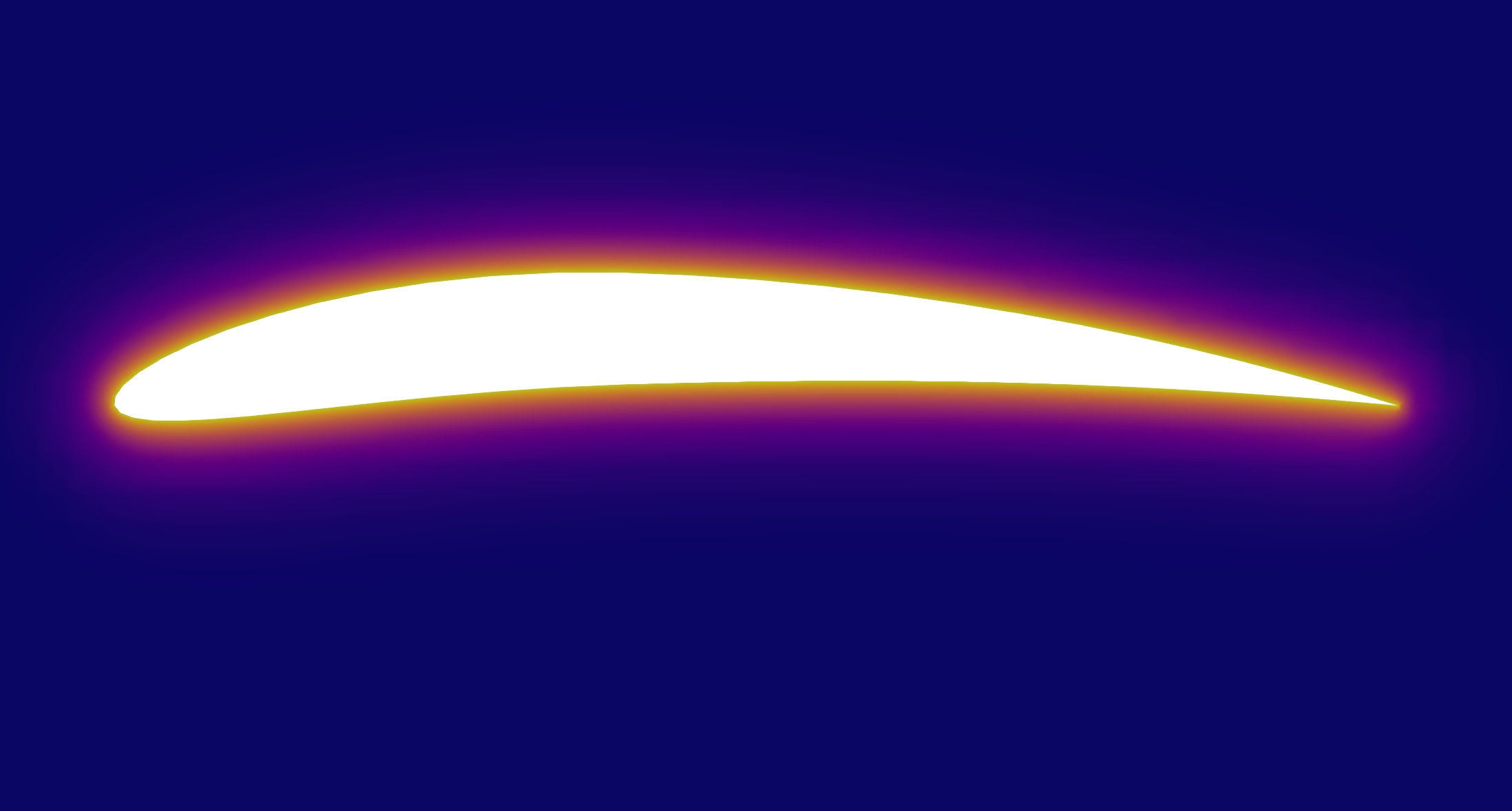}
  \includegraphics[width=0.45\textwidth]{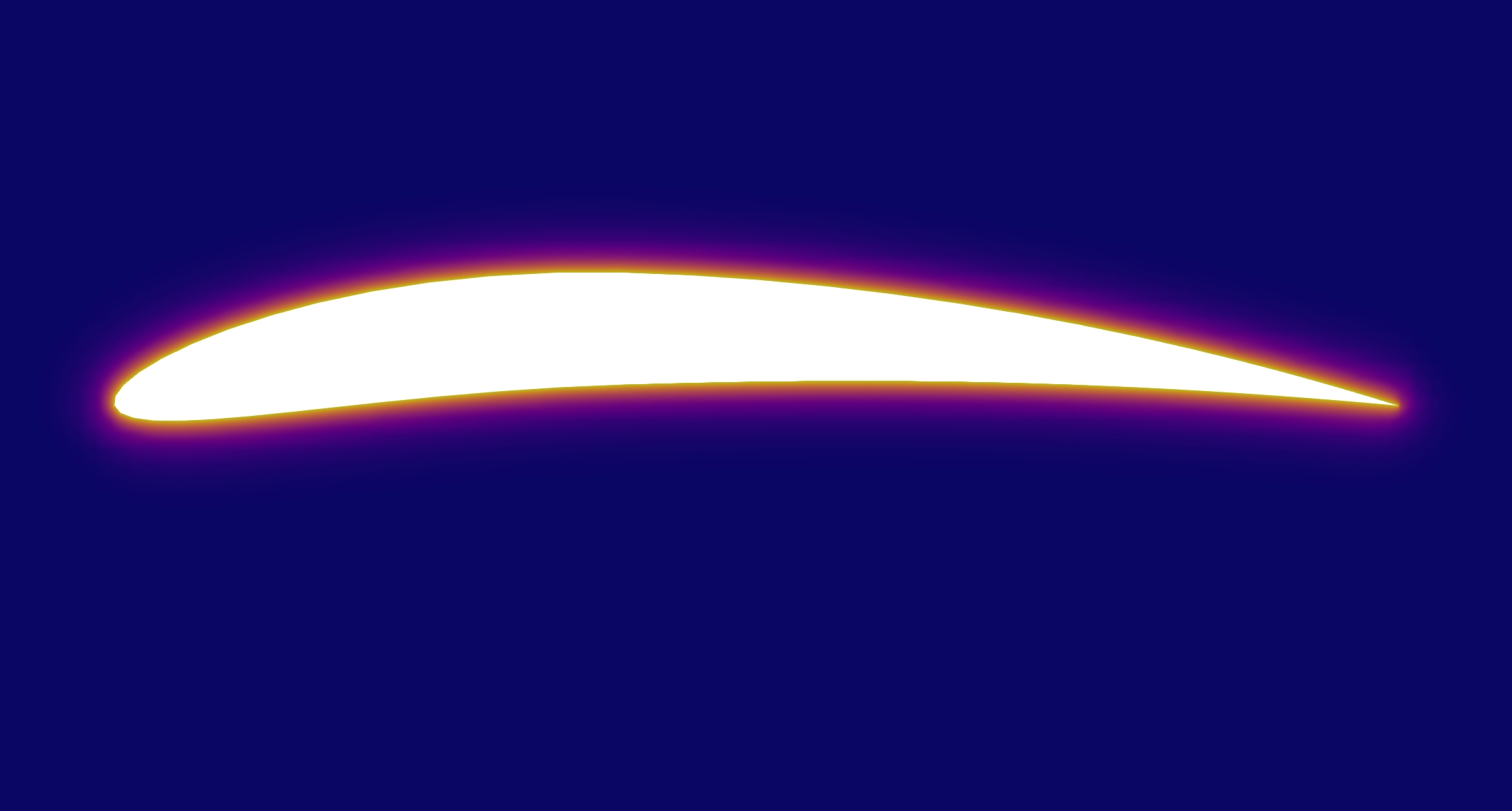} \\[1mm]
  \hspace{0mm}
  \includegraphics[width=0.45\textwidth]{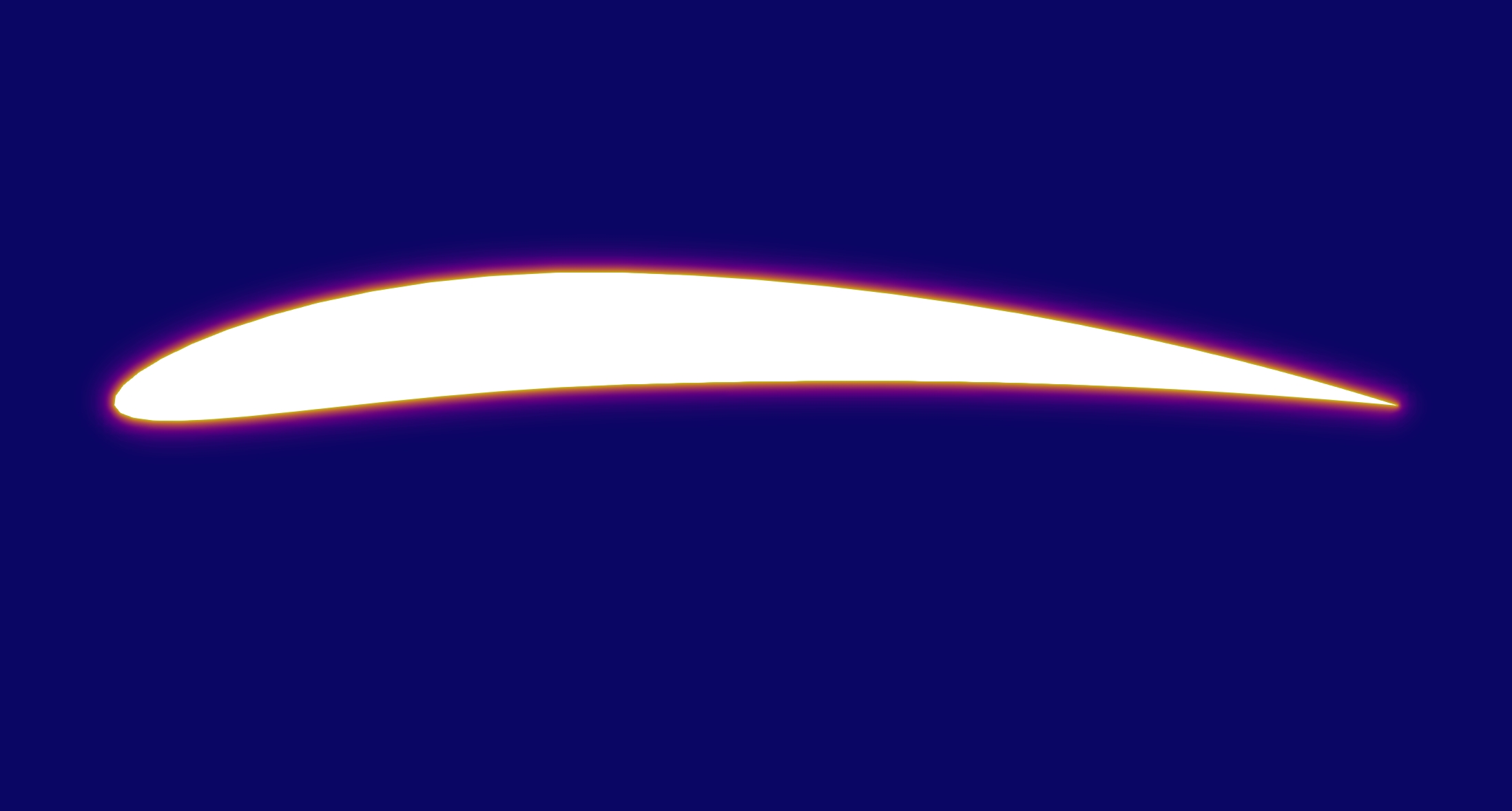}
  \includegraphics[width=0.45\textwidth]{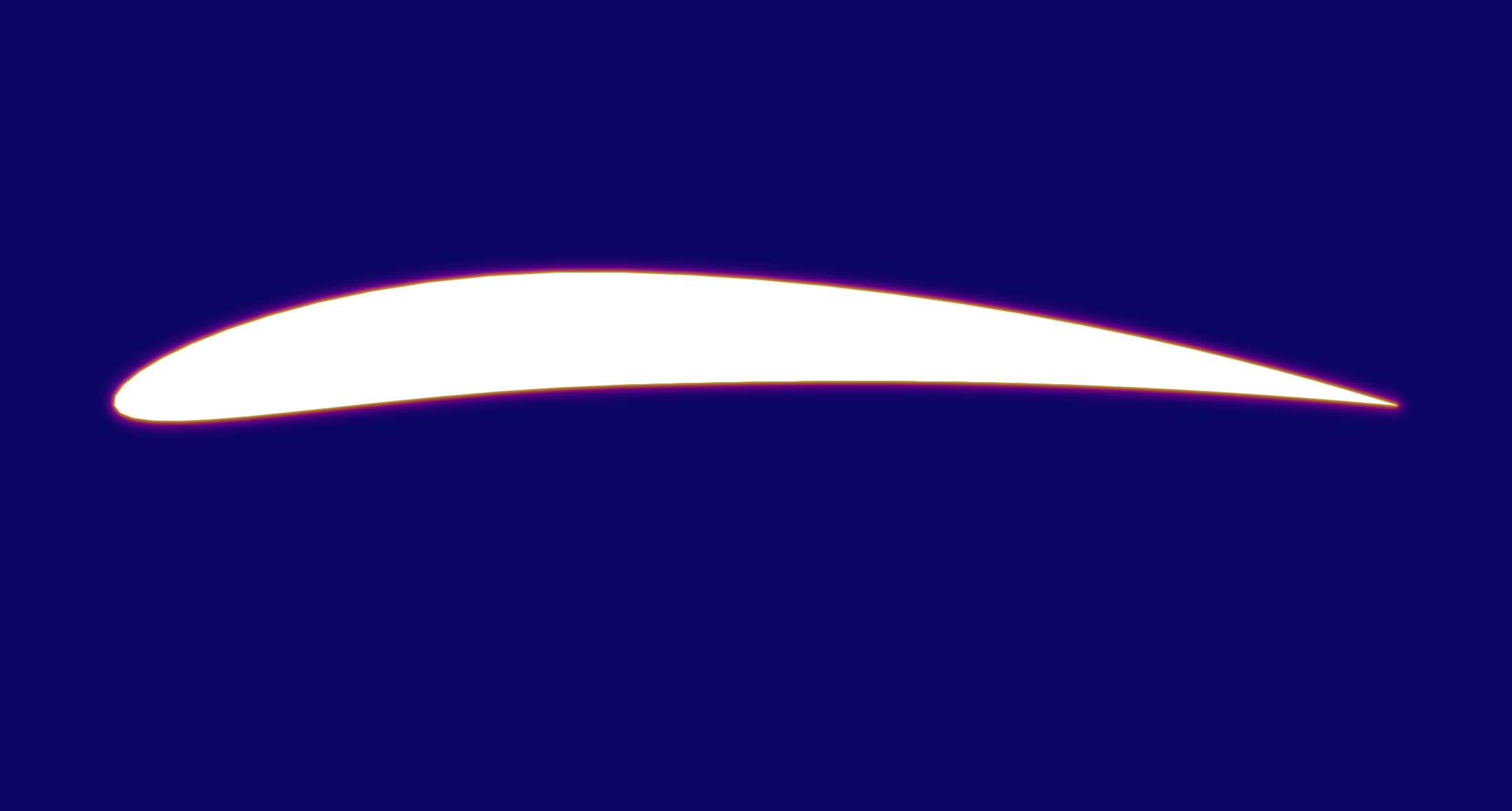}
  \caption{Solution of the boundary layer problem~\eqref{eq:blayer} for $k=1,2,3,4$}
  \label{fig:blayer-2d}
\end{figure}

\begin{figure}[htbp]
  \centering
  \includegraphics[width=0.3\textwidth]{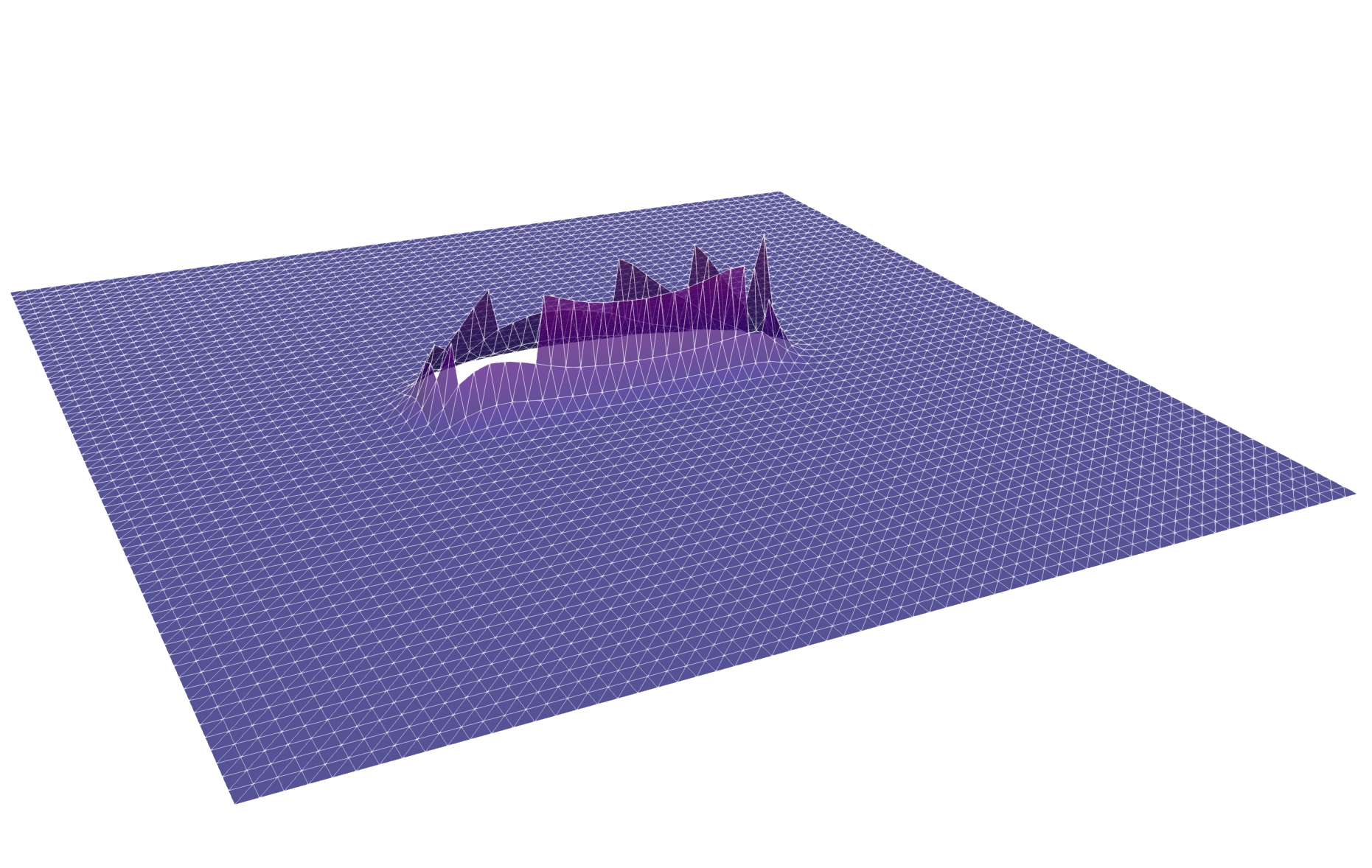}
  \includegraphics[width=0.3\textwidth]{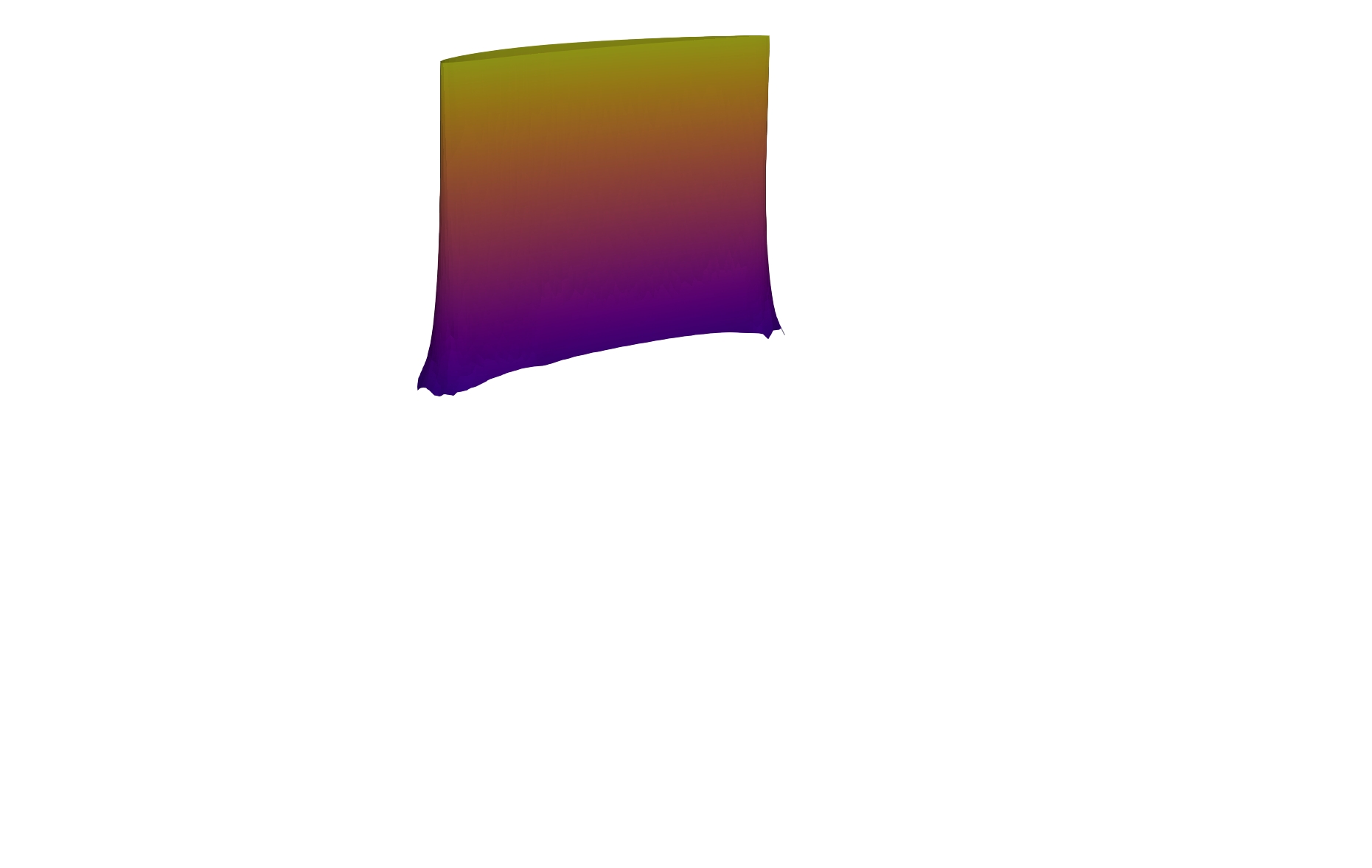}
  \includegraphics[width=0.3\textwidth]{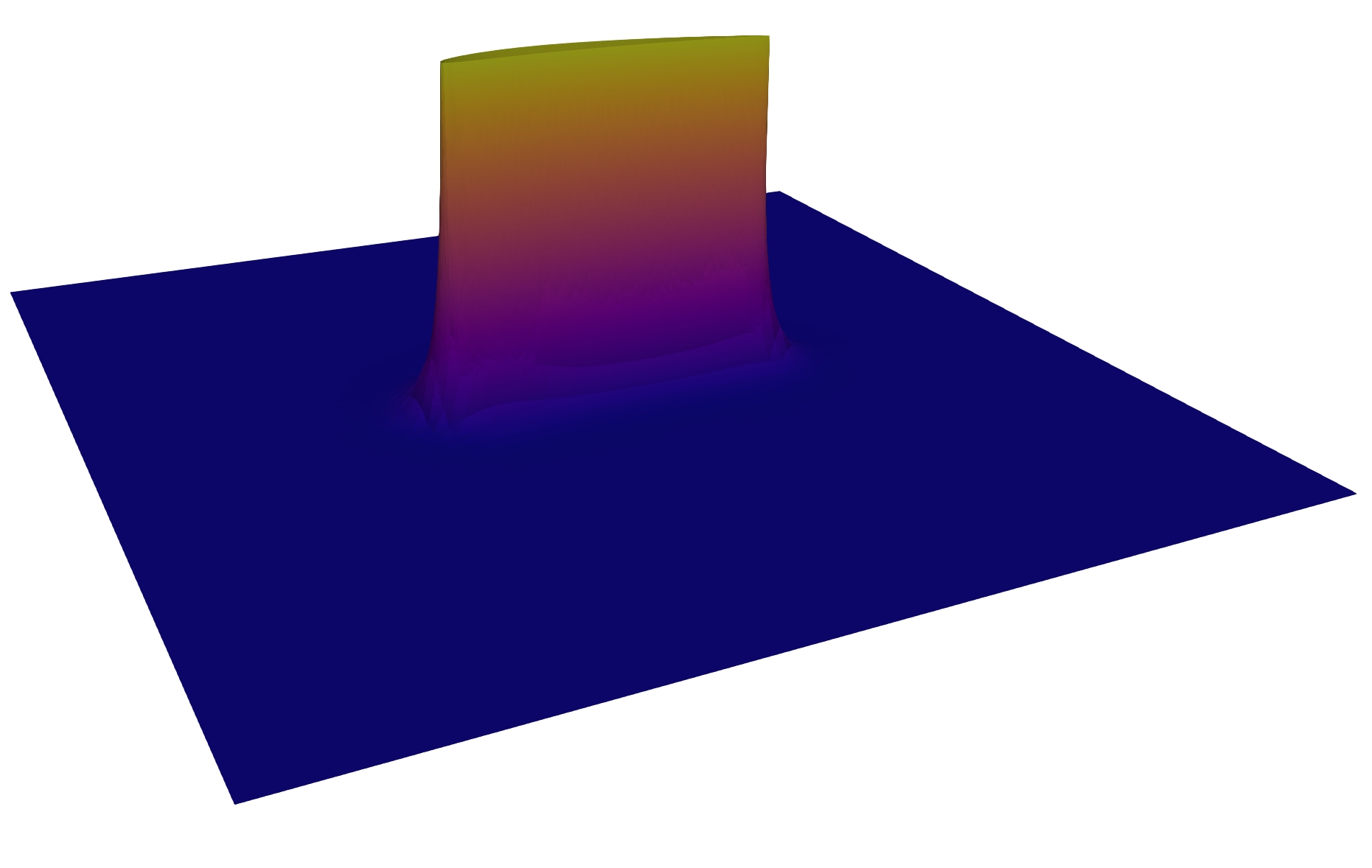} \\
  \includegraphics[width=0.3\textwidth]{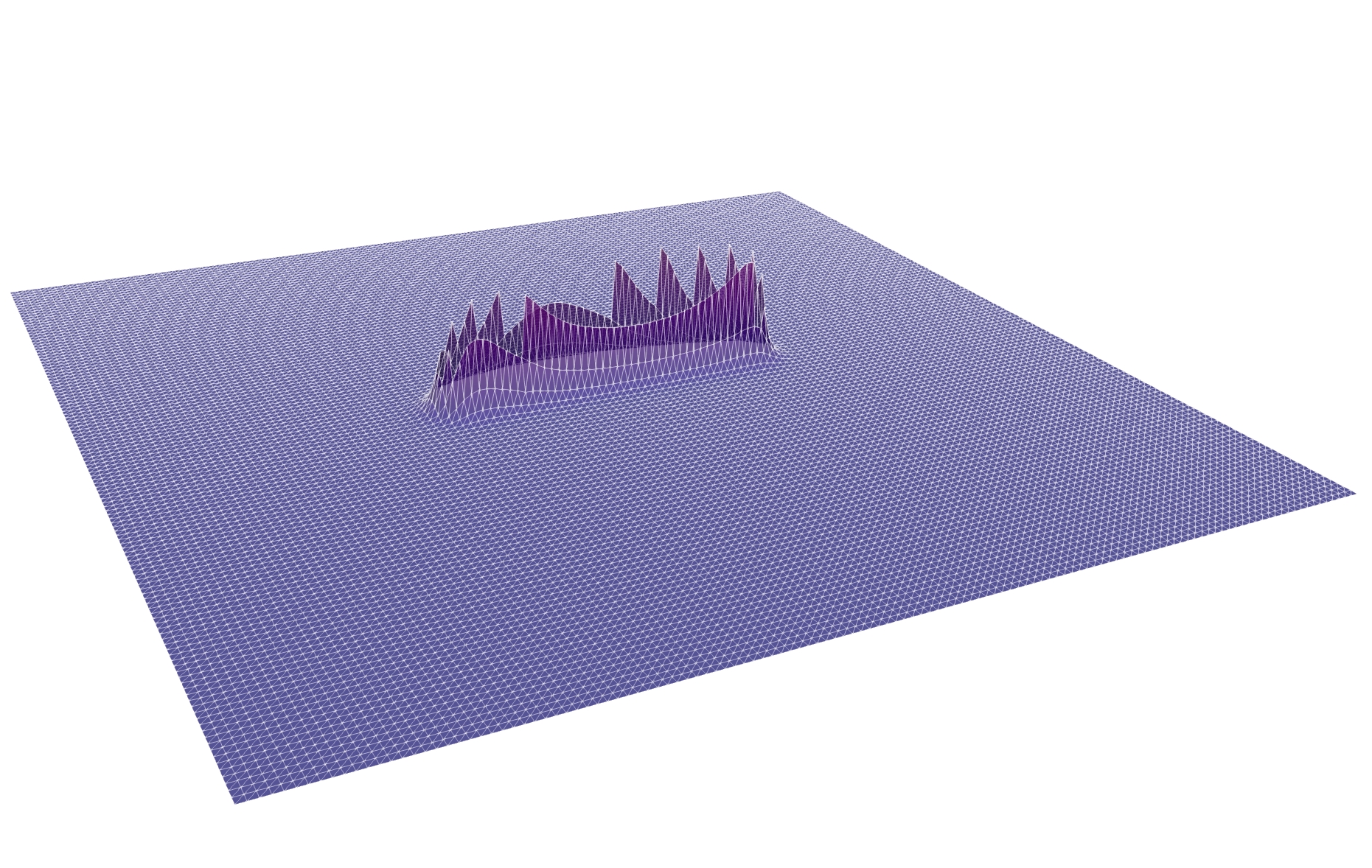}
  \includegraphics[width=0.3\textwidth]{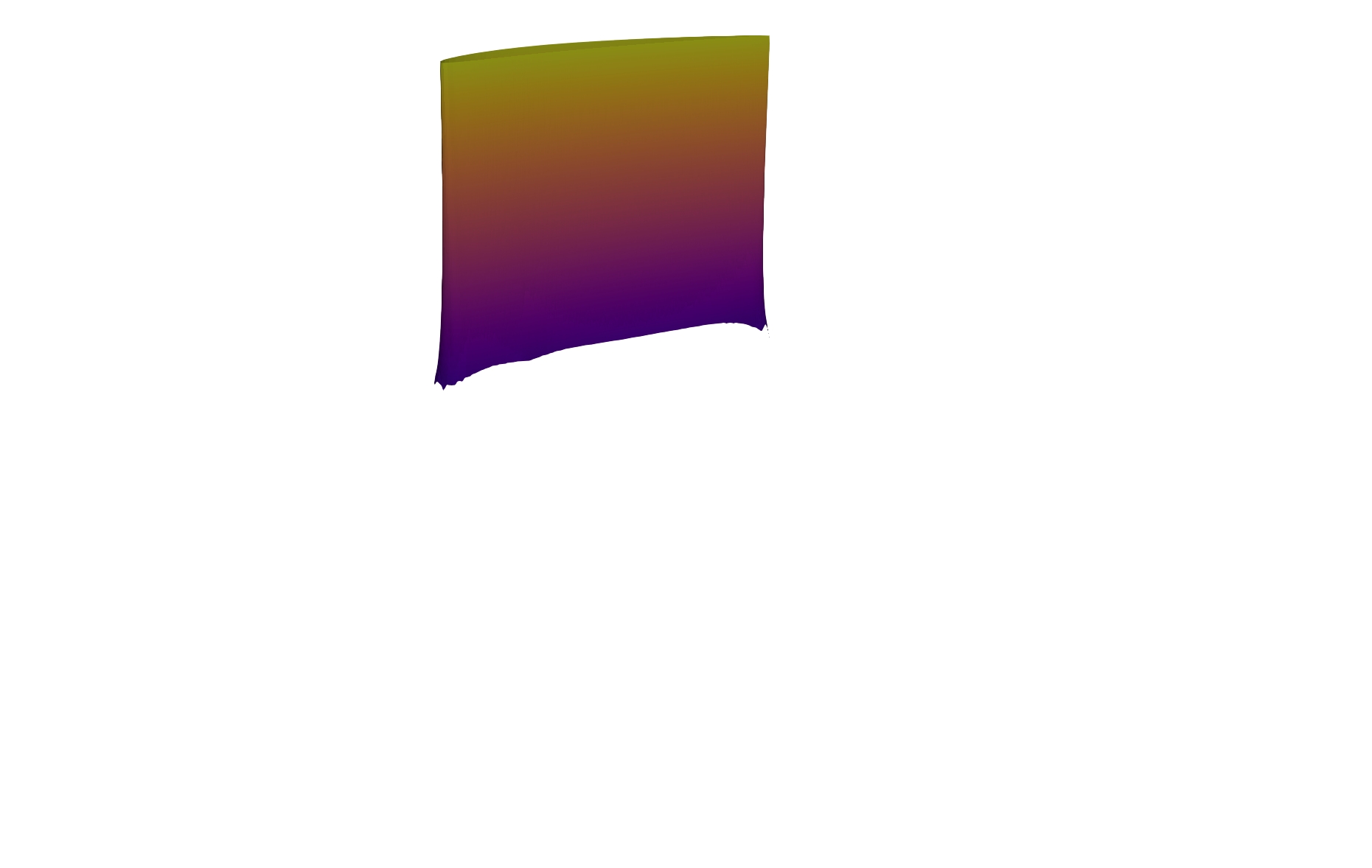}
  \includegraphics[width=0.3\textwidth]{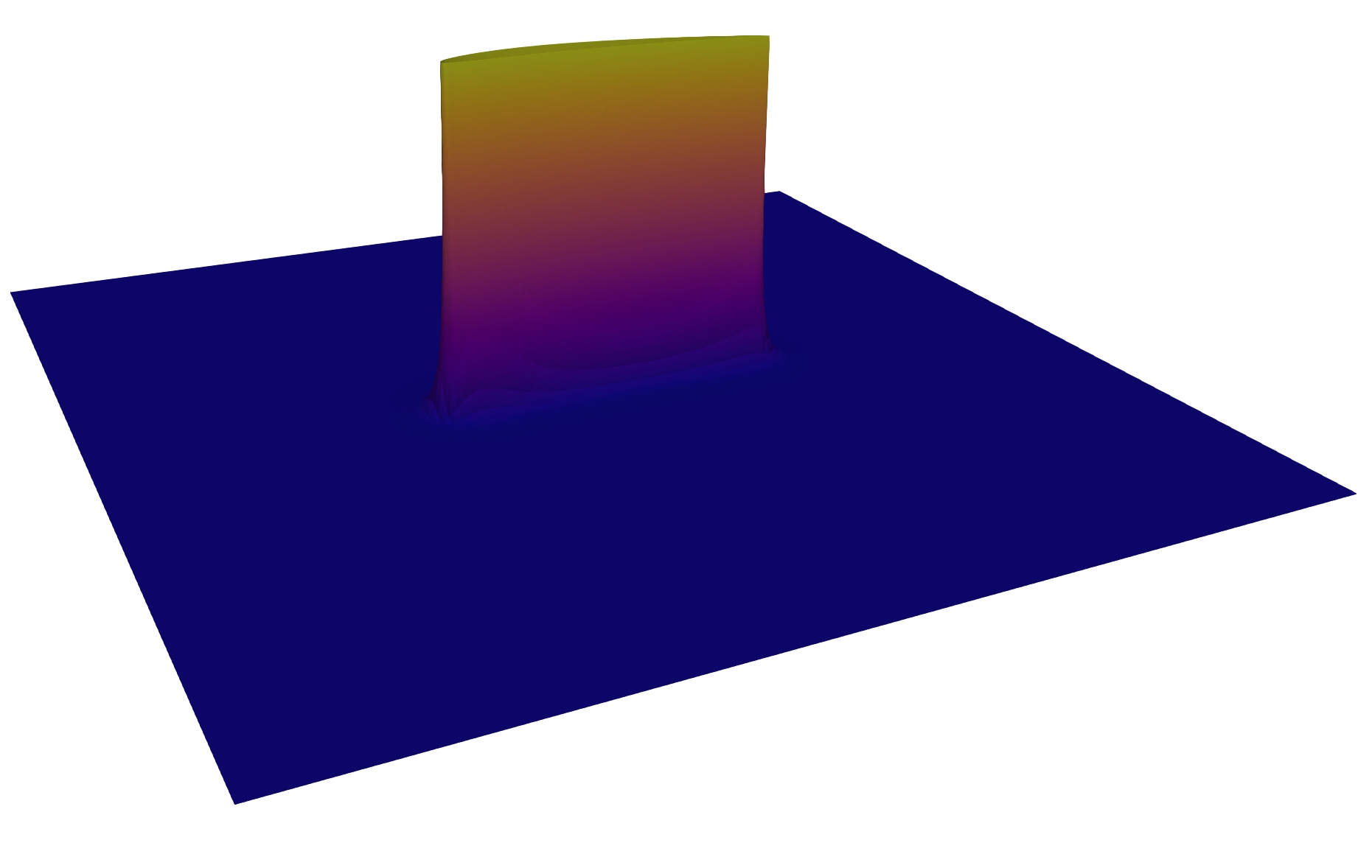} \\
  \includegraphics[width=0.3\textwidth]{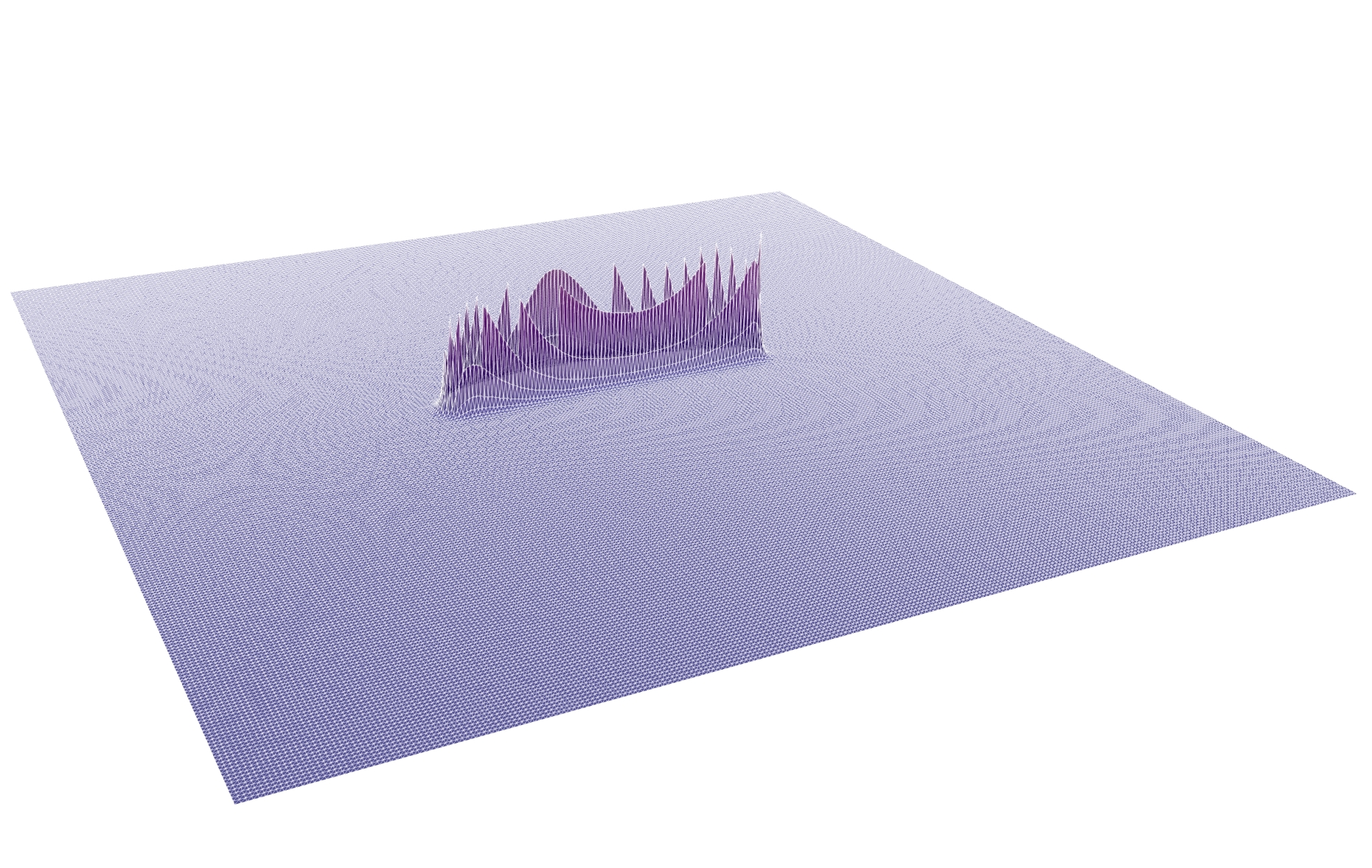}
  \includegraphics[width=0.3\textwidth]{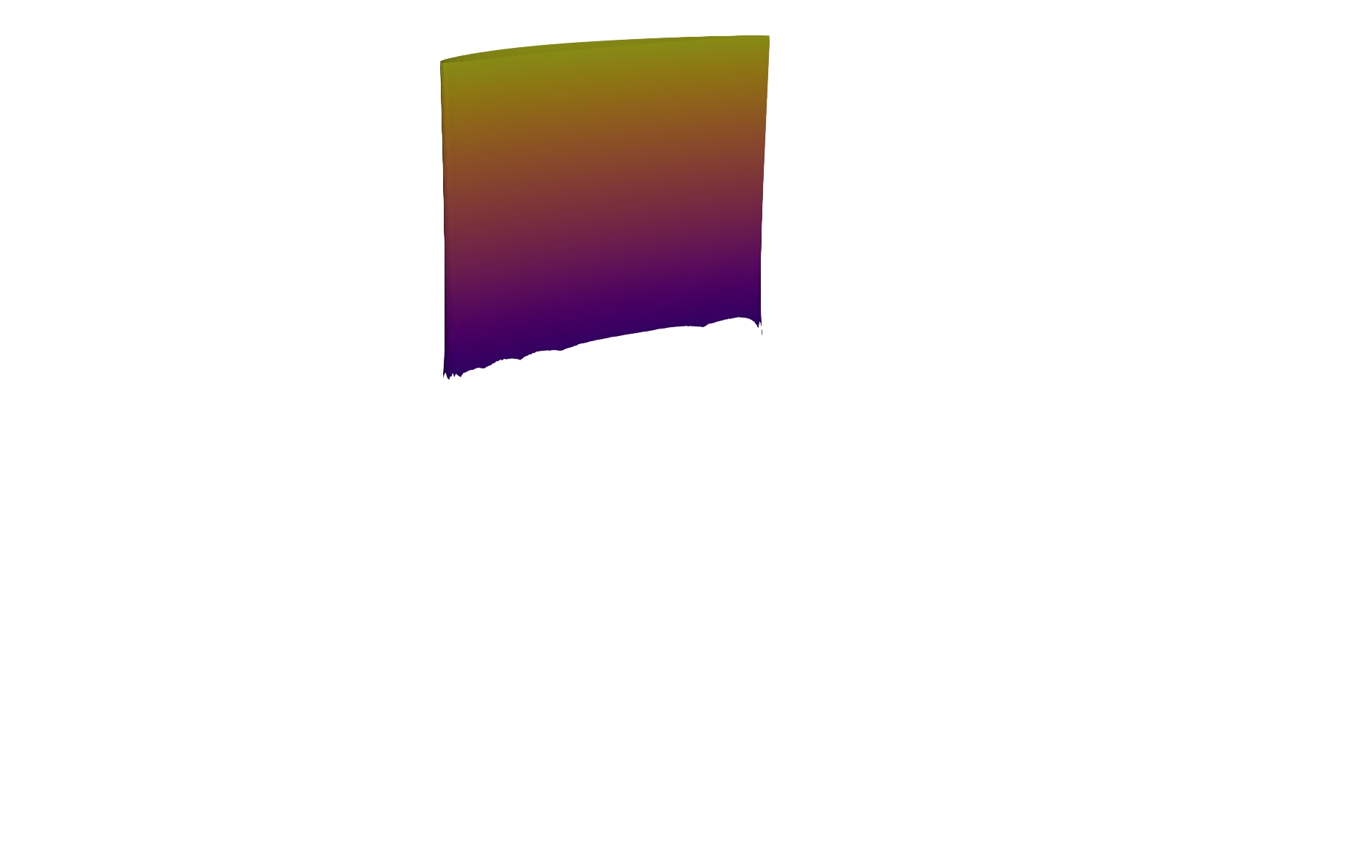}
  \includegraphics[width=0.3\textwidth]{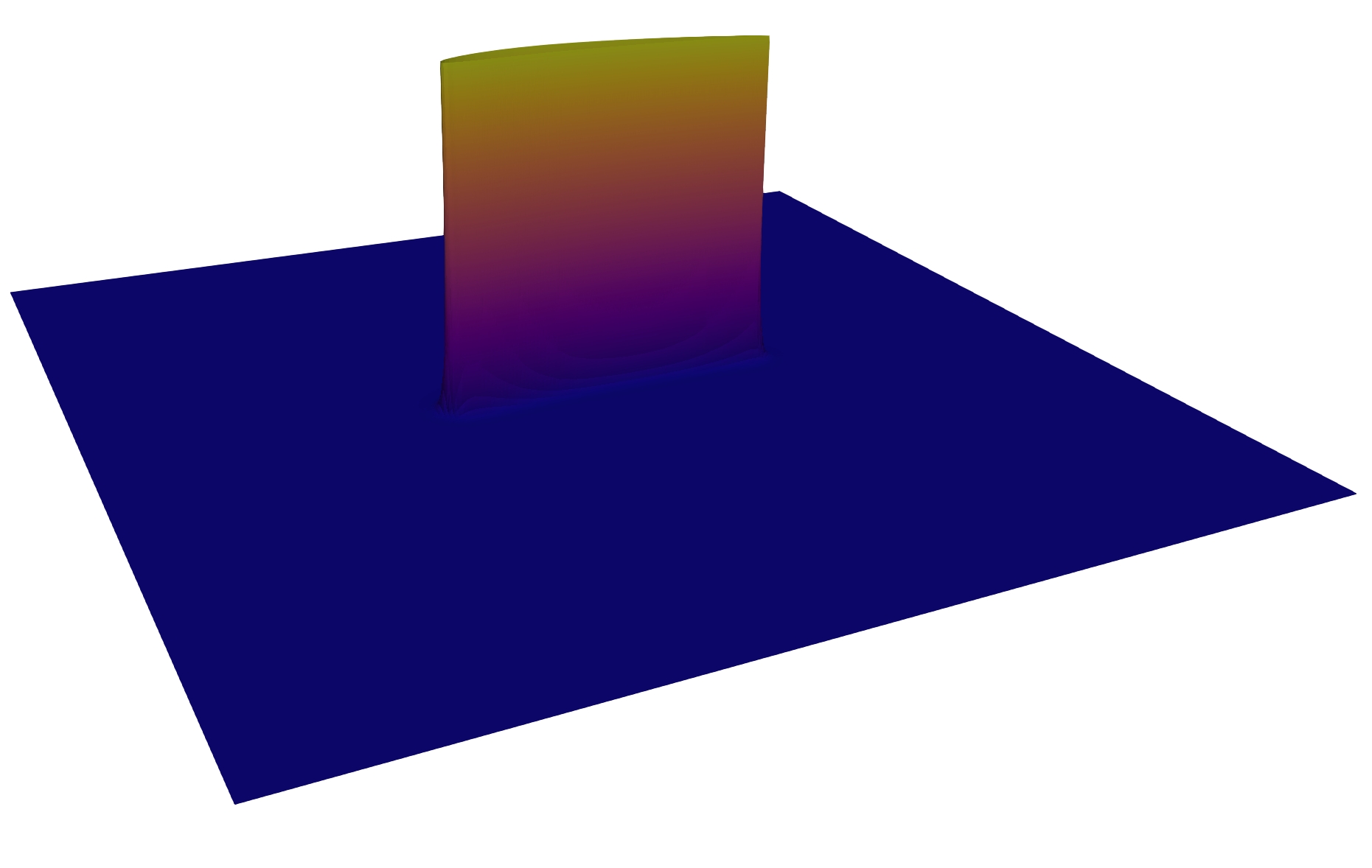} \\
  \includegraphics[width=0.3\textwidth]{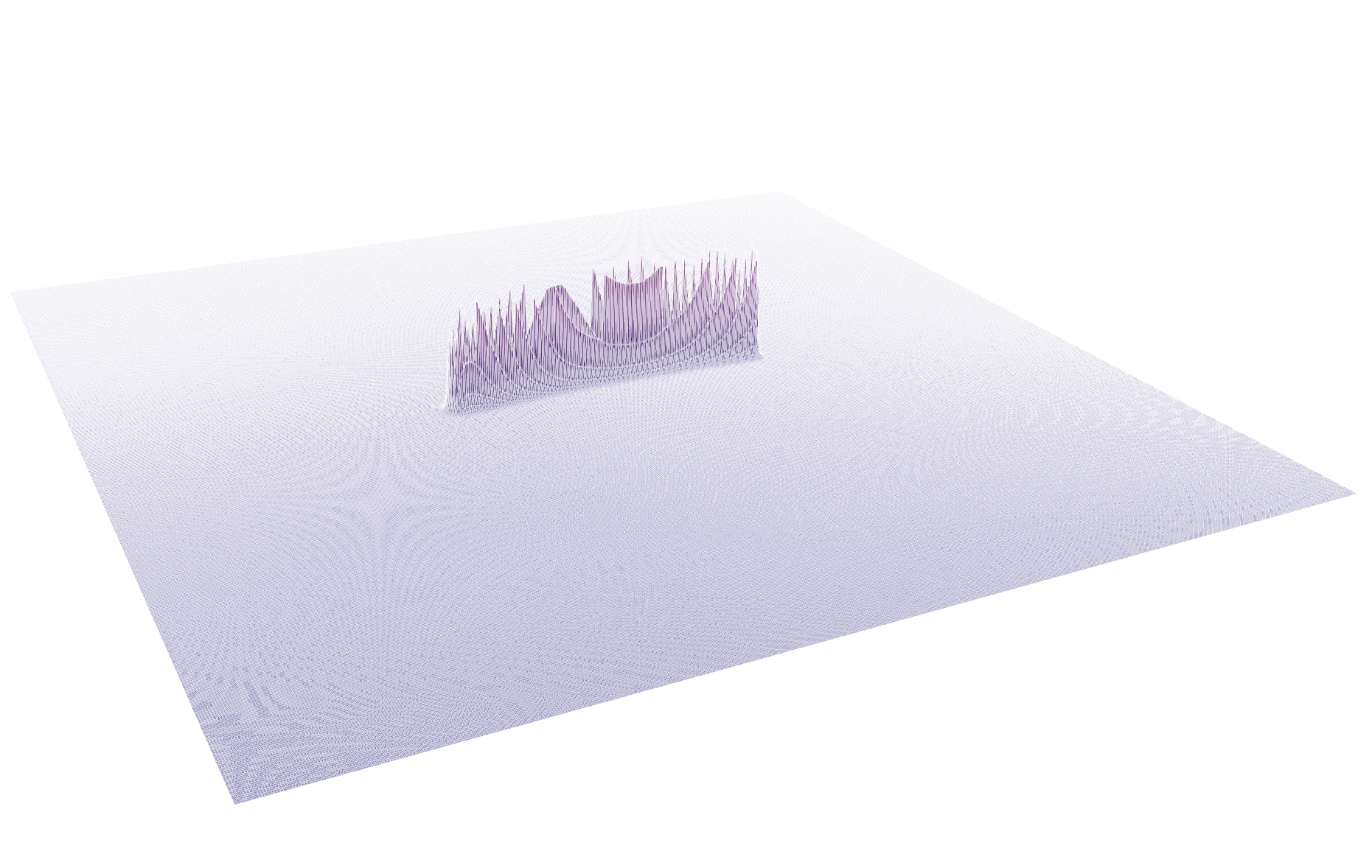}
  \includegraphics[width=0.3\textwidth]{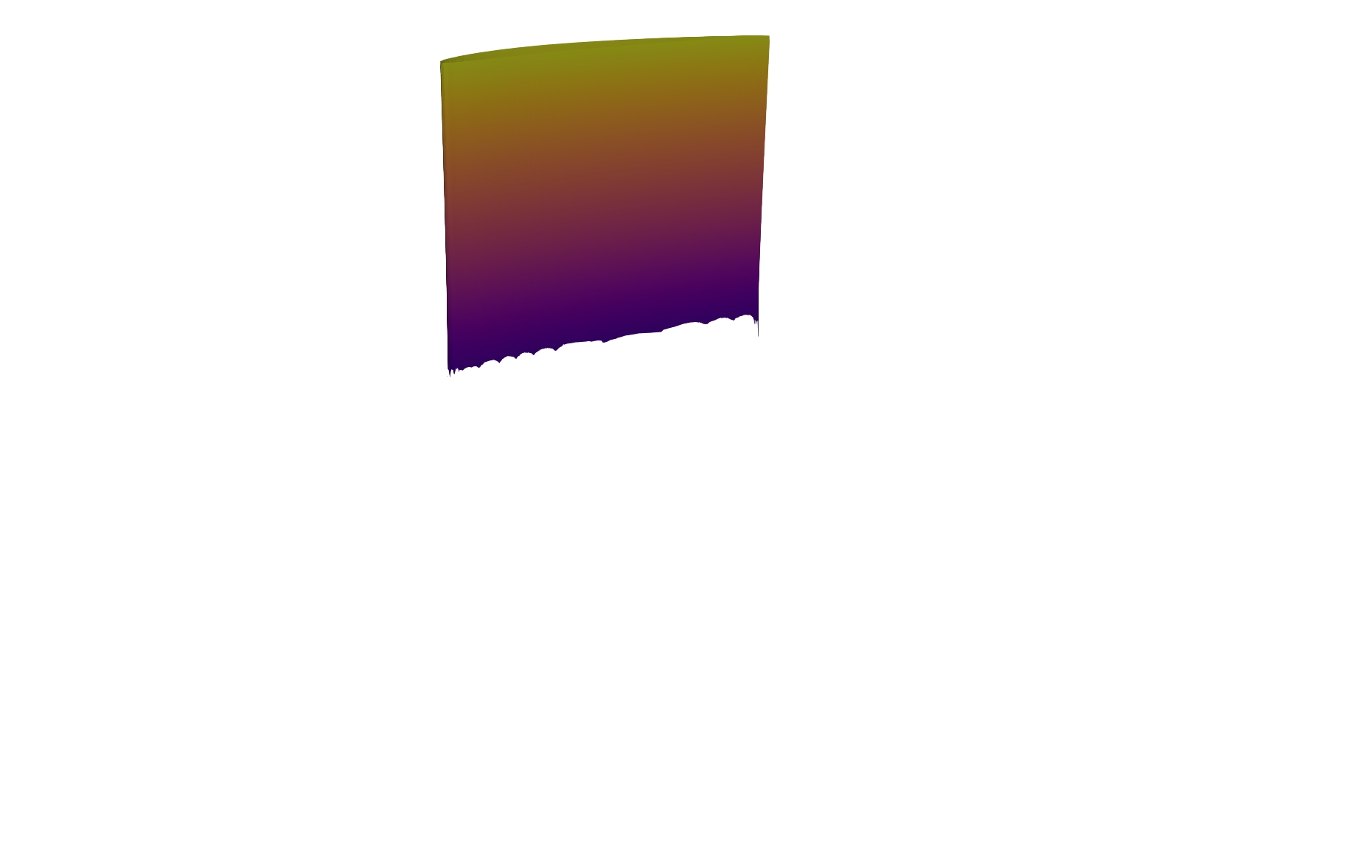}
  \includegraphics[width=0.3\textwidth]{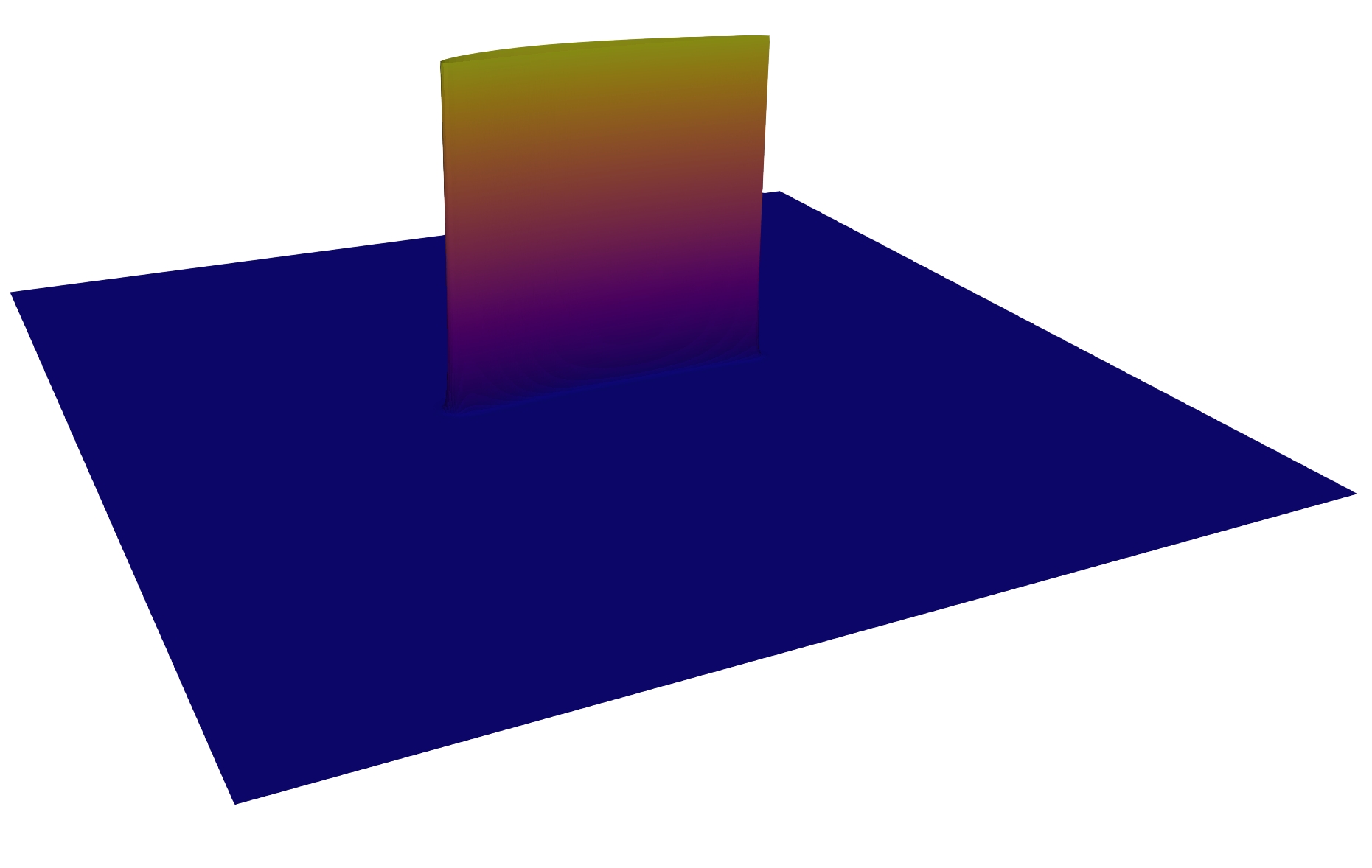} \\
  \includegraphics[width=0.3\textwidth]{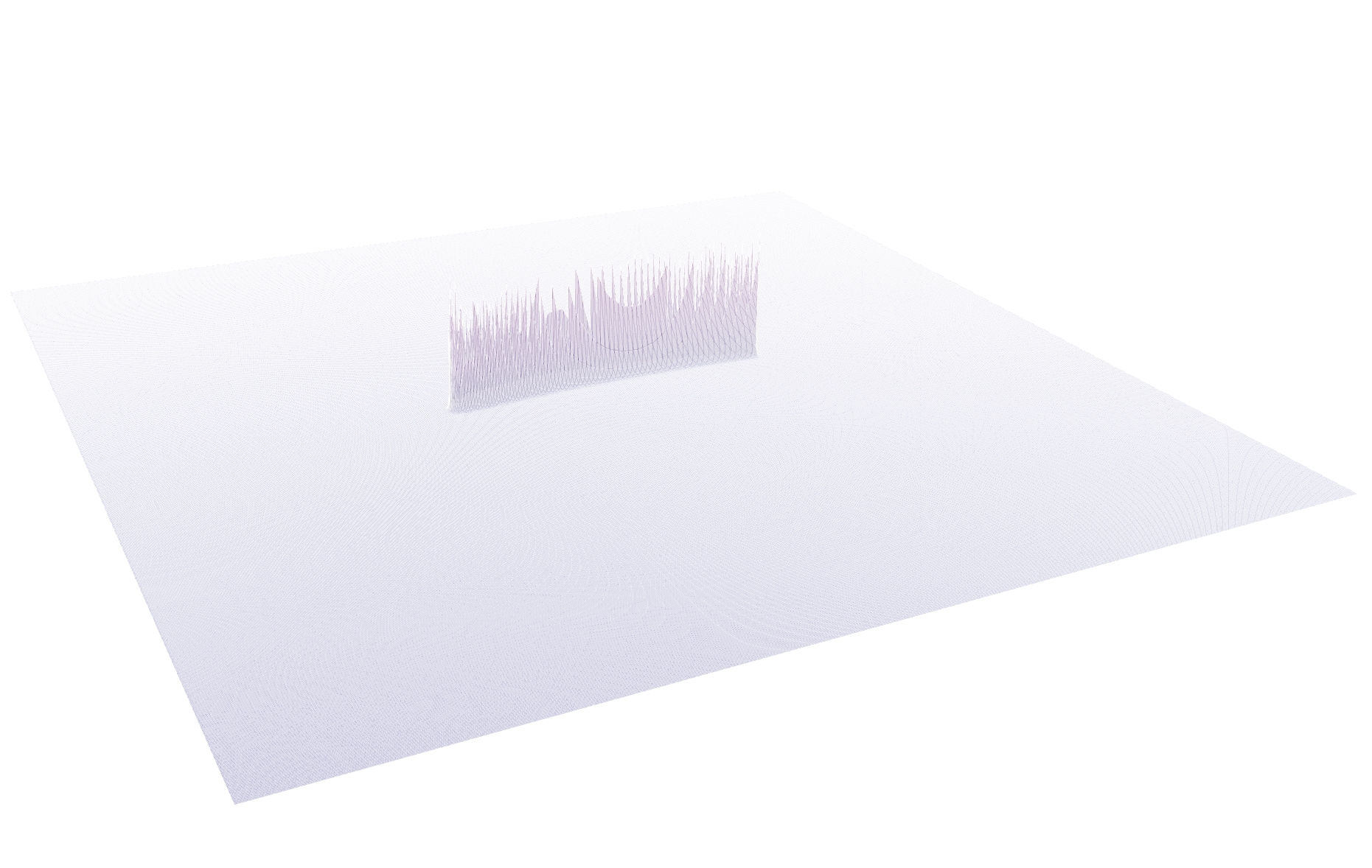}
  \includegraphics[width=0.3\textwidth]{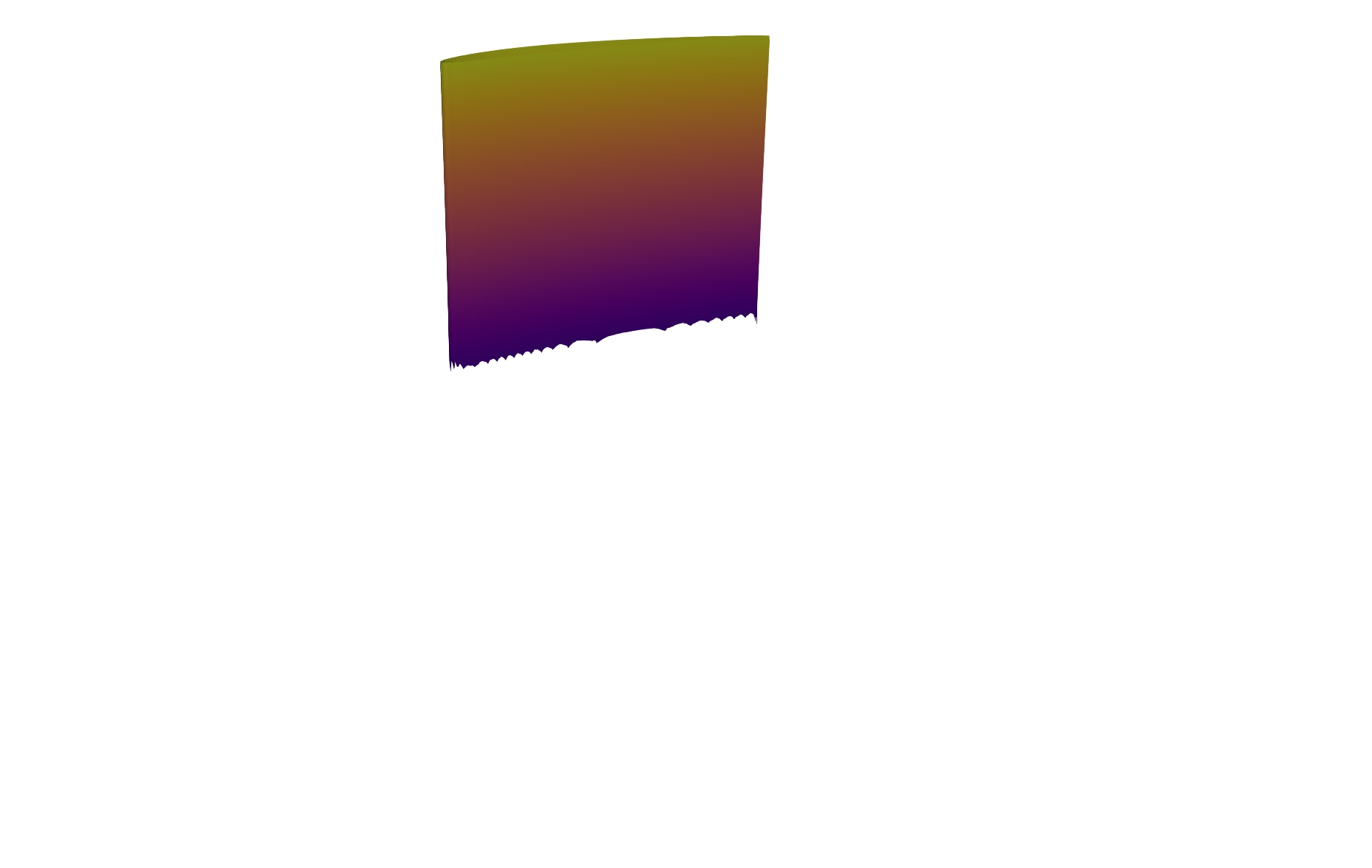}
  \includegraphics[width=0.3\textwidth]{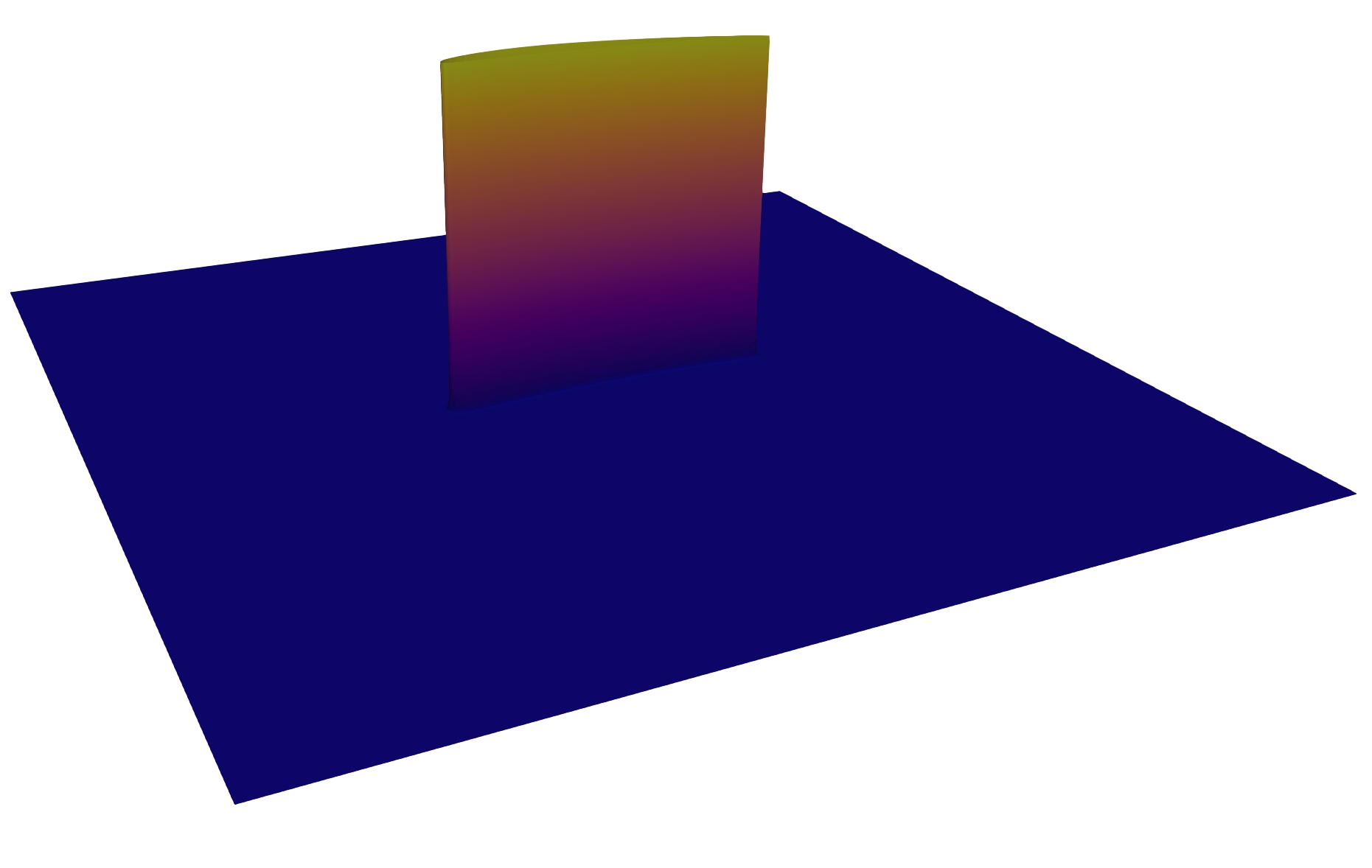} \\
  \caption{Solution of the boundary layer problem~\eqref{eq:blayer} for $k=0,1,2,3,4$. The left column shows the solution represented on the background mesh, the middle column shows the solution on the overlapping boundary-fitted mesh and the right column shows the composite multimesh solution.}
  \label{fig:blayer-3d}
\end{figure}

\begin{figure}[htbp]
  \centering
  \includegraphics[width=0.9\textwidth]{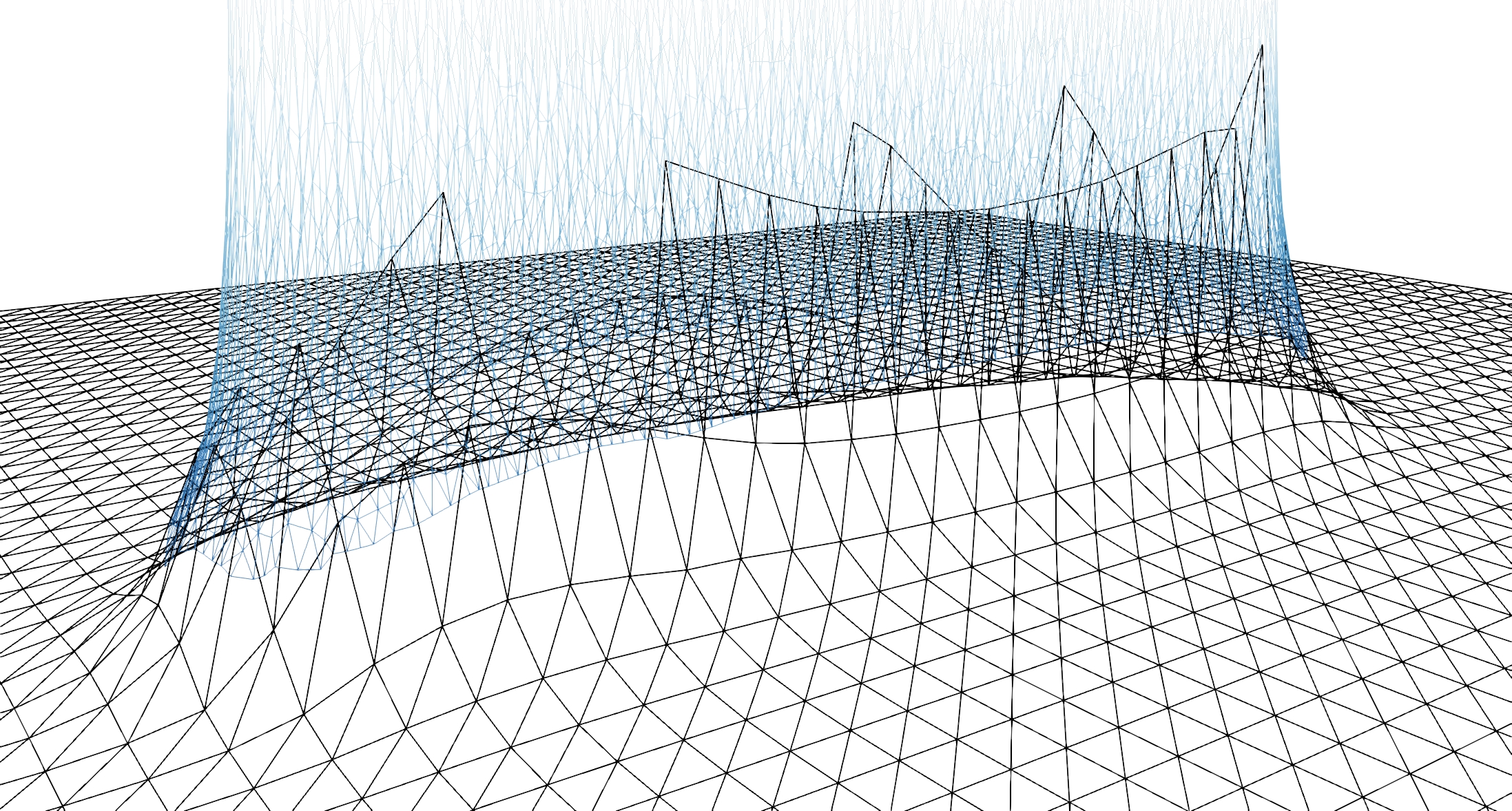} \\[1mm]
  \includegraphics[width=0.9\textwidth]{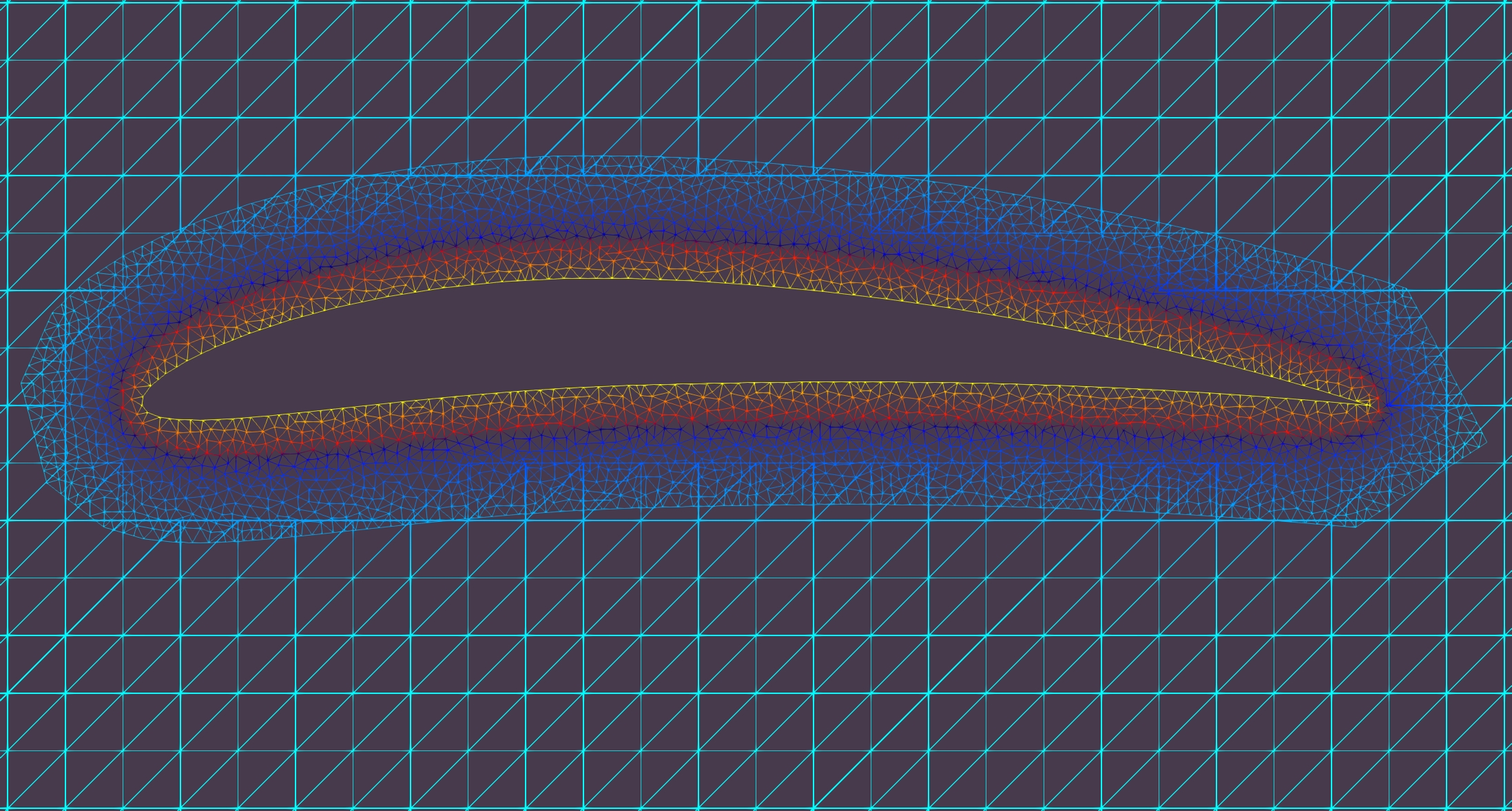} \\[1mm]
  \includegraphics[width=0.9\textwidth]{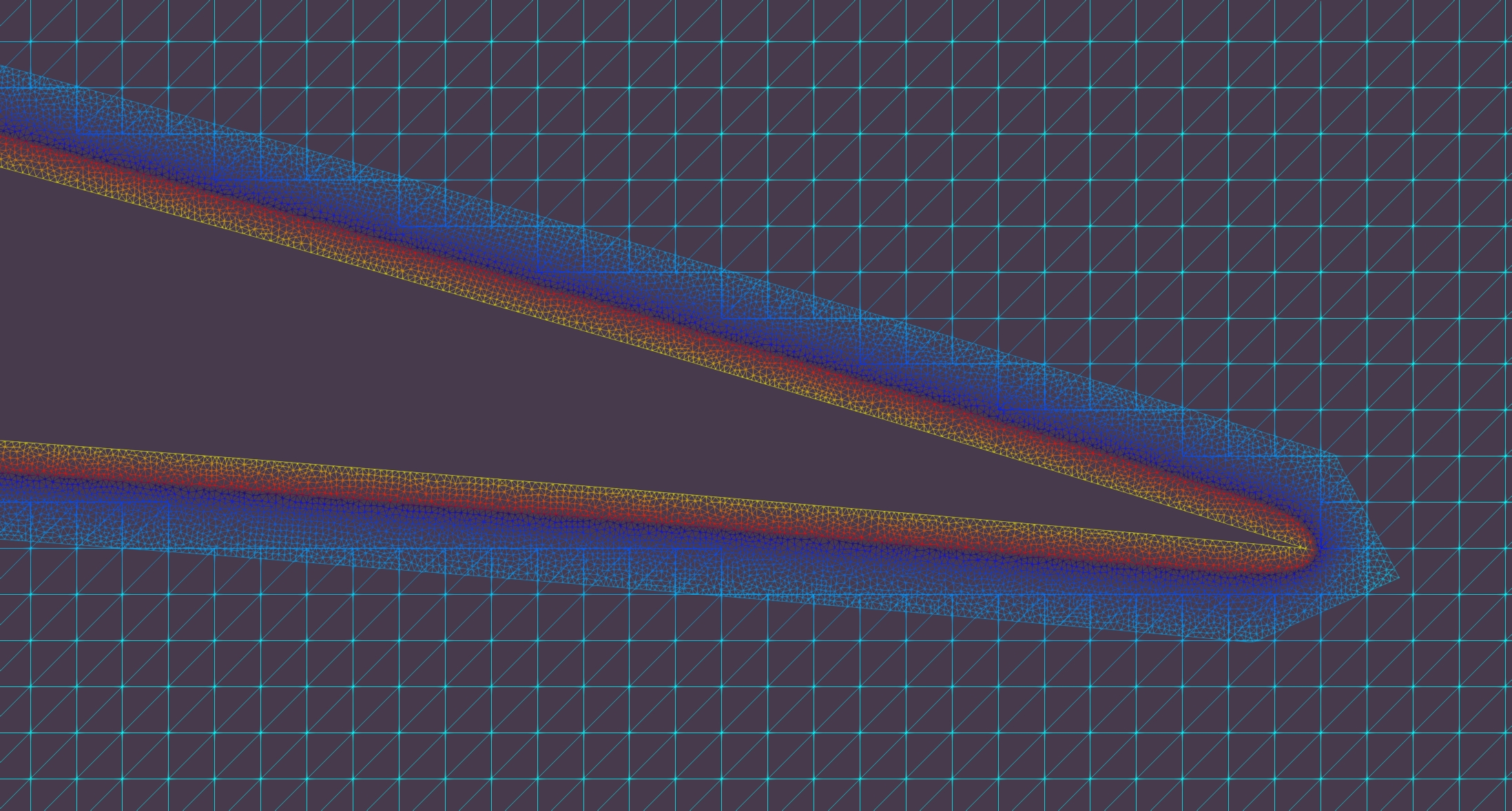}
  \caption{(Top) A 3D view of the matching solutions to the boundary layer problem~\eqref{eq:blayer} on the background mesh and the overlapping boundary-fitted mesh for $k = 0$. (Middle) The corresponding 2D view for $k = 0$. (Bottom) A detailed zoom close to the tip of the airfoil for the finest mesh ($k = 4$).}
  \label{fig:blayer-zoom}
\end{figure}

\section{Conclusions}
\label{sec:conclusions}

We have analyzed a general framework for discretization of the Poisson equation posed on a domain defined by an arbitrary number of intersecting meshes with arbitrary mesh sizes. The analysis show that for sufficiently large Nitsche and stabilization parameters, the method is optimal and stable. As expected, there is a dependency on the maximum number of intersecting meshes in the coercivity, error analysis and in the condition number estimate. This was seen numerically in the accompanying paper~\cite{mmfem-1}, and here we are able to quantify this dependence. In addition, the numerical results presented in this paper show that the method is indeed stable when the meshes involved have vastly different mesh sizes.

As mentioned in the introduction, the multimesh method may be advantageous in the case of dynamic domains, since remeshing may be avoided. This is due to the fact that the computational geometry routines automatically identify the elements constituting the active meshes, and this can easily be done every time the domains move. Although so far only studied for two-dimensional problems~\cite{Dokken:2017aa} reports a speed up. The same approach is also applied in~\cite{DokkenNS}.

Future work involves extending the implementation to include three-dimensional meshes, which is a challenge due to requirements of efficient and accurate computational geometry routines in the case of arbitrary many intersecting meshes. That the multimesh formulation is valid in the case of two meshes in three dimensions is explored in~\cite{Johansson:2015aa} for the Stokes problem.

\section{Acknowledgments}
\label{sec:acks}

August Johansson was supported by The Research Council of Norway through a Centres of Excellence grant to the Center for Biomedical Computing at Simula Research Laboratory, project number 179578, as well as by the Research Council of Norway through the FRIPRO Program at Simula Research Laboratory, project number 25123. Mats G.~Larson was supported in part by the Swedish Foundation for Strategic Research Grant No.~AM13-0029, the Swedish Research Council Grants Nos.~2013-4708, 2017-03911, and the Swedish Research Programme Essence. Anders Logg was supported by the Swedish Research Council Grant No.~2014-6093.

\bibliographystyle{siamplain}
\bibliography{bibliography}

\end{document}